\newcommand{\mres}{\mathbin{\vrule height 1.6ex depth 0pt width
0.13ex\vrule height 0.13ex depth 0pt width 0.9ex}}  
\newcommand{\Lb}{\mathcal L}
\newcommand{\N}{\mathbb N}
\newcommand{\E}{\mathcal E}
\newcommand{\F}{\mathcal F}
\newcommand{\dd}{\,\mathrm d}
\newcommand{\argmin}{\mathrm{arg\,min}\,}
\newcommand{\eps}{\varepsilon}
\newcommand{\R}{\mathbb R}
\newcommand{\Id}{\mathrm{Id}}
\newcommand{\indi}{\mathlarger{\mathlarger{\mathbb{1}}}}  
\newcommand{\ourphi}{\widetilde{\varphi}_\eta}
\newcommand{\weaklystar}{\mathrel{\ensurestackMath{\stackon[0.5pt]{\rightharpoonup}{\scriptstyle\ast}}}}
\newtheorem{prop}{Proposition}[section]
\newtheorem{thm}[prop]{Theorem}
\newtheorem{lemma}[prop]{Lemma}
\newtheorem{defn}[prop]{Definition}
\newtheorem{cor}[prop]{Corollary}
\theoremstyle{remark}
\newtheorem{remark}[prop]{Remark}
\newcommand{\ignore}[1]{{}}
\author{Zofia Grochulska$^*$, Micha{\l} {\L}asica$^\dagger$
	\\
\small $^*$Institute of Mathematics, University of Warsaw\\
{\small \tt z.grochulska@uw.edu.pl} \\
\small $^\dagger$Institute of Mathematics of the Polish Academy of Sciences \\
\small \& Graduate School of Mathematical Sciences, The University of Tokyo \\
{\tt \small mlasica@impan.pl}
}
\title{Local estimates for vectorial Rudin--Osher--Fatemi type problems in one dimension} 
\date{\today}
\begin{document}
 \maketitle
\begin{abstract}
    We consider the Rudin--Osher--Fatemi variational denoising model with general regularizing term in one-dimensional, vector-valued setting. We obtain local estimates on the singular part of the variation measure of the minimizer in terms of the singular part of the variation measure of the datum. In the case of homogeneous regularizer, we prove local estimates on the whole variation measure of the minimizer and deduce an analogous result for the gradient flow of the regularizer. 
\end{abstract} 
\section{Introduction}

Let $\Omega \subset \R^m$ be an open, bounded domain with Lipschitz boundary and let $F\colon \R^{m \times n} \to [0, \infty[$ be a~convex function. We consider the functional $\widetilde{\F} \colon C^1(\Omega, \R^n)\to [0, \infty]$ given by
\begin{equation} \label{tildeF} 
	\widetilde{\F}(w) = \int_{\Omega} F(w_x) \dd \Lb^m, 
\end{equation} 
where $w_x$ denotes the (total) derivative of $w$, and $\F \colon L^2(\Omega, \R^n) \to [0, \infty]$ defined as a lower semicontinuous envelope of $\widetilde{\F}$, i.\,e. 
\begin{equation}\label{L2_env} 
	\F(w) = \inf\left\{\liminf_{\ell \to \infty} \widetilde{\F}(w^\ell), \ (w^\ell) \subset C^1(\Omega, \R^n),\ w^\ell \to w \text{ in } L^1_{loc}(\Omega, \R^n)  \right\}. 
\end{equation} 
We define a generalized Rudin--Osher--Fatemi functional $\E \colon L^2(\Omega, \R^n) \to [0, \infty]$ by 
   \begin{equation} \label{ROF_def} 
   	 \E_h^\lambda(w) = \lambda \F(w) + \frac{1}{2} \int_{\Omega} |w - h|^2 \dd \Lb^m, 
   \end{equation}
where $h \in L^2(\Omega, \R^n)$, $\lambda > 0$. We will drop indices $h, \lambda$ as long as we are not interested in dependence on them. By convexity, this functional is weakly lower semicontinuous. Moreover, it is proper and its sublevel sets are weakly compact. Therefore it admits a minimizer $u$, which is unique since $\E$ is strictly convex on its domain (i.\,e.\ the subset of $L^2(\Omega, \R^n)$ where $\E$ is finite).

In the superlinear case, where $F(\xi) \sim |\xi|^p$, $p > 1$, the domain of $\F$ (and $\E$) coincides with $L^2(\Omega, \R^n)\cap W^{1,p}(\Omega, \R^n)$, and on this set $\F$ is given by formula \eqref{tildeF}. However, here we consider $F$ of \emph{linear growth}, i.\,e. 
\begin{equation} \label{lin_growth} 
	C_F^- |\xi| + B_F^- \leq F(\xi) \leq C_F^+ |\xi| + B_F^+ \quad \text{for } \xi \in \R^{m \times n}
\end{equation}
with $C_F^\pm > 0$, $B_F^\pm \in \R$. In this case, analogous statement does not hold, which is related to lack of reflexivity of $W^{1,1}(\Omega, \R^n)$, and the domain of $\F$ coincides with $L^2(\Omega, \R^n)\cap BV(\Omega, \R^n)$. Recall that $BV(\Omega, \R^n)$ is the space of those elements $w$ of $L^1(\Omega, \R^n)$ whose distributional derivative $w_x$ belongs to $M(\Omega, \R^{m \times n})$, the space of vector-valued Radon measures. An explicit characterization of $\F$ was obtained by Goffman and Serrin in \cite{GS64} and is described in more detail in Section 2. Given a~$BV$ mapping $w$, one can write decomposition of its derivative $w_x$ into absolutely continuous and singular parts with respect to the Lebesgue measure as $\dd w_x = w_x^{ac} \dd  \Lb^m + \dd w_x^s$. Consequently, one can define a measure $F(w_x) \in M(\Omega)$ by
\begin{equation} \label{Fwx}
	\dd F(w_x) = F(w_x^{ac}) \dd \Lb^m + F^{\infty}\left(\tfrac{w_x^s}{|w_x^s|}\right) \dd |w_x^s|, \quad \text{where } F^{\infty}(\nu) = \lim_{t \to \infty} \tfrac{1}{t}F(t \nu) \text{ for } \nu \in \mathbb S^{n-1}.
\end{equation}  
Then, one can prove that for $w \in BV(\Omega, \R^n)$
\begin{equation}
    \F(w) = F(w_x)(\Omega).
\end{equation}
In the case $F = |\cdot|$, $\F$ coincides with the standard $BV$ seminorm also known as the total variation, while $\E$ coincides with the classical Rudin--Osher--Fatemi functional, see \cite{rof, ChambolleLions}.

A natural question arises, what can be said about the singular part of $u_x$. So far, this question has been mostly investigated in the scalar-valued case $n=1$. Let us report briefly on known results in this direction. Under the assumption that $\Omega$ is convex, it has been proved in \cite{LasicaRybka} that 
\[F^\infty(u_x^s)(\Omega) \leq F^\infty(h_x^s)(\Omega).\]
In particular, if $h \in W^{1,1}(\Omega)$, then $u \in W^{1,1}(\Omega)$. On the other hand, in the case where $F$ is the $\ell^1$ norm, one can construct a non-convex polygon $\Omega \subset \R^2$ and $h \in C^\infty(\overline{\Omega})$ such that $u$ has a jump discontinuity in $\Omega$ \cite{tetris}. This sort of singularity does not appear if $F$ is the Euclidean norm, in which case $|u_x^j| \leq |h_x^j|$ as measures, i.\,e.
\[|u_x^j|(V) \leq |h_x^j|(V) \quad \text{for Borel } V \subset \Omega\]
for $h \in BV(\Omega) \cap L^\infty(\Omega)$, where $w_x^j$ denotes the jump part of $w_x$ for $w \in BV(\Omega)$ \cite{jalalzaijump}. This estimate has been generalized to a wider class of functions $F$ in \cite{Valkonen2015}. To our knowledge, it remains an open question whether an analogous estimate holds for the whole singular part of $u_x$. Moreover, no similar estimate on the singular or jump part of $u_x$ is known in the vector-valued setting $n > 1$, except in the one-dimensional case $m=1$.  

From now on we will restrict ourselves to $m=1$. In the one-dimensional, scalar-valued case $m=n=1$, for $F = |\cdot|$, it has been proved in \cite{bcno, BonforteFigalli} that
\begin{equation} \label{1d_homo_est}  
	|u_x| \leq |h_x| \quad \text{as measures.}
\end{equation} 
The proof in \cite{bcno} follows by analysis of level sets of $u$, thus depending on the linear order of the range $\R$. On the other hand, the reasoning in \cite{BonforteFigalli} is based on explicit description of minimizers for piecewise constant $h$. Inequality \eqref{1d_homo_est} was generalized to the vector-valued case $n > 1$ in \cite{giacomellilasica} using integral estimates.  

In this paper, our goal is to obtain estimates on the singular part of the minimizer in the case $m=1$, $n>1$ for possibly general $F$. We will need a structural assumption formally similar to the one in \cite{mercier} (note however that in \cite{mercier} $m>1$, $n=1$ and the results are not closely related). We recall the following definition (compare e.\,g.\ \cite{ChambolleNovaga2015}). We say that a function $\varphi \colon \R^n \to [0, +\infty[$ is an \emph{anisotropy} if it is convex and positively $1$-homogeneous, i.\,e. 
	\[\varphi(t p) = t \varphi(p) \quad \text{for } p \in \R^n,\ t \geq 0.\]
	For any anisotropy $\varphi$, we denote 
	\[c_{\varphi}^+ = \max_{0 \neq p \in \R^n} \frac{\varphi(p)}{|p| }, \quad c_{\varphi}^- = \min_{0 \neq p \in \R^n} \frac{\varphi(p)}{|p|}. \]
An anisotropy $\varphi$ will be called \emph{coercive} if $c_\varphi^- > 0$.  Note that any even anisotropy is a seminorm (and vice versa) and any coercive, even anisotropy is a norm (and vice versa). We prove the following three local estimates on the variation measure of minimizers of $\E$ in terms of the variation measure of the datum $h$ under varying assumptions on $F$.
\begin{thm} \label{thm:homo}
	Let $F = \varphi \colon \R^n \to [0, \infty[$ be a coercive anisotropy. Suppose that $h \in BV(U, \R^n)$ for an open interval $U \subset I$. Then the minimizer $u$ of $\E$ satisfies $|u_x| \leq |h_x|$ in the sense of Borel measures on $U$, i.\,e.
	\begin{equation} \label{homo_est} 
		|u_x|(E) \leq |h_x|(E) \text{ for any Borel set } E \subset U.
	\end{equation}
\end{thm}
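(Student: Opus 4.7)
The strategy mirrors the approach used for the Euclidean vectorial case ($\varphi = |\cdot|$) in \cite{giacomellilasica}, but must be carried out locally on subintervals of $U$ rather than on the whole interval $I$.

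First, I would establish a subinterval minimality principle. The atoms of the finite Radon measures $|u_x|$ and $|h_x|$ on $U$ form an at most countable set $A$. For any $a < b$ in $U \setminus A$, the one-sided limits of $u$ and $h$ at $a$ and $b$ coincide; by a cut-and-paste argument (gluing any candidate $v$ on $V = (a,b)$ with $u$ on $I \setminus V$), $u|_V$ is the unique minimizer of
\[
v \;\mapsto\; \lambda\,\varphi(v_x)(V) + \lambda\,\varphi(v(a^+) - u(a)) + \lambda\,\varphi(u(b) - v(b^-)) + \tfrac{1}{2}\int_V |v-h|^2\, dt
\]
over $v \in L^2(V, \R^n) \cap BV(V, \R^n)$, the two boundary jump terms recording the anisotropic cost of reattaching $v$ to $u$ outside $V$.

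Second, I would derive $|u_x|(V) \le |h_x|(V)$ from this localized minimality. A direct comparison with the competitor $v \equiv h$ on $V$ (for $a, b$ also Lebesgue points of $h$) yields only the anisotropic bound $\varphi(u_x)(V) \le \varphi(h_x)(V) + \varphi(h(a)-u(a)) + \varphi(u(b)-h(b))$, which by coercivity gives a suboptimal Euclidean constant $c_\varphi^+/c_\varphi^-$. To reach the sharp constant $1$, I would combine the duality representation
\[
|u_x|(V) \;=\; \sup\!\left\{\int_V \psi \cdot du_x \,\colon\, \psi \in C_c(V, \R^n),\ |\psi| \le 1\right\}
\]
with the localized Euler--Lagrange identity $u - h = \lambda z_x$, where $z \in L^\infty(V, \R^n)$ is a measurable selection of $\partial\varphi(u_x)$; integrating by parts and exploiting the $L^\infty$ bound on $z$ together with the $1$-homogeneity of $\varphi$ should replace $\int_V \psi \cdot du_x$ by a pairing against $dh_x$ plus correction terms that remain controllable as $(a,b)$ is shrunk around a point of $U \setminus A$.

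Third, the passage from open intervals to general Borel sets is routine: any open subset of $U$ is an at most countable disjoint union of intervals whose endpoints lie in $U \setminus A$, and outer regularity of the Radon measures $|u_x|$ and $|h_x|$ propagates the inequality to arbitrary Borel $E \subset U$.

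The central difficulty lies in the second step: subinterval minimality by itself yields only an \emph{anisotropic} variation bound, and upgrading it to the sharp \emph{Euclidean} estimate is the genuinely new ingredient beyond mere coercivity of $\varphi$. Controlling the boundary jump terms so that they vanish in the right sense when the endpoints $a,b$ range over $U \setminus A$ is a secondary but essential technical point.
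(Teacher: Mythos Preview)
Your proposal has a genuine gap in the second step, and you have correctly identified it yourself: the localized comparison with $v=h$ yields only $\varphi(u_x)(V)\le\varphi(h_x)(V)$ up to boundary terms, hence $|u_x|(V)\le (c_\varphi^+/c_\varphi^-)|h_x|(V)$, and your proposed upgrade to constant $1$ via duality does not go through. The difficulty is structural: the Euler--Lagrange selection $z\in\partial\varphi(u_x)$ satisfies $\varphi^\circ(z)\le1$ (polar bound), not $|z|\le1$, so it is adapted to $\varphi(u_x)$, not to $|u_x|$. If you try to pair a Euclidean test field $\psi$ with $|\psi|\le1$ against $d(u_x-h_x)$ and integrate by parts using $u-h=\lambda z_x$, you are left with terms involving $\psi_x$ or $\psi_{xx}$, over which the supremum defining $|u_x|(V)$ gives you no control. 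There is no evident way to convert the $\varphi^\circ$-bound on $z$ into a Euclidean estimate without reintroducing the ratio $c_\varphi^+/c_\varphi^-$.

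The paper's proof takes a completely different route. It works at the level of a smooth two-parameter approximation $\E_{\eps,\eta}$ whose minimizer $u^{\eps,\eta}\in W^{2,2}$ solves a classical Euler--Lagrange equation, and tests that equation with $(\zeta^2\,D\mathfrak G_{k,\delta}(u_x^{\eps,\eta}))_x$, where $\mathfrak G_k(p)=(|p|-k)_+$ is built from the \emph{Euclidean} norm and $\zeta\in C^1_c(U)$ localizes. The crucial algebraic fact is that homogeneity of the approximate anisotropy gives $D^2\overline\varphi_\eta(p)\cdot p=0$, so the anisotropic Hessian is orthogonal to the radial direction $p/|p|$ along which $D\mathfrak G_{k,\delta}$ points. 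This kills the dangerous cross terms between the Euclidean test function and the anisotropic operator; what survives is controlled by a uniform $W^{1,2}$ bound and vanishes as $\eta\to0$. After passing $\delta,\eta,\eps\to0$ one obtains $\int\zeta^2\,d\mathfrak G_k(u_x)\le\int\zeta^2\,d\mathfrak G_k(h_x)$, and $k\to0$ finishes. The sharp constant thus comes not from duality but from the identity $p\cdot D^2\varphi(p)=0$, which is the ingredient your outline is missing.
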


\begin{thm} \label{thm:constants}
	Let $F = f \circ \varphi$, where $f\colon[0, \infty[ \to [0, \infty[$ is convex, non-decreasing and of linear growth, and $\varphi\colon \R^n \to [0, \infty[$ is a coercive anisotropy. Suppose that $h \in BV(U, \R^n)$ for an open interval $U \subset I$. Then the minimizer $u$ of $\E$ satisfies $|u_x^{s}| \leq \left( \frac{c_\varphi^+}{c_\varphi^-} \right)^2|h_x^{s}|$ in the sense of Borel measures on $U$, i.\,e.
	\begin{equation} \label{eq:constants}
		|u_x^{s}|(E) \leq \left( \tfrac{c_\varphi^+}{c_\varphi^-} \right)^2 |h_x^{s}|(E) \text{ for any Borel set } E \subset U.
	\end{equation}
\end{thm}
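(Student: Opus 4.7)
The plan is to reduce Theorem~\ref{thm:constants} to the homogeneous case treated in Theorem~\ref{thm:homo} by introducing an auxiliary minimization problem whose integrand is a coercive anisotropy with the same recession function as $F = f\circ\varphi$. Set $f^\infty := \lim_{t\to\infty} f(t)/t$; this is finite and strictly positive by the convexity, monotonicity, and linear growth of $f$. Let $\psi := f^\infty\varphi$, which is a coercive anisotropy with $c_\psi^\pm = f^\infty c_\varphi^\pm$, hence $c_\psi^+/c_\psi^- = c_\varphi^+/c_\varphi^-$, and whose recession function coincides with $F^\infty$. Denoting by $\F^\psi$ the relaxed functional \eqref{L2_env} associated to $\psi$, one has $\F(w) = \F^\psi(w) + \int_\Omega [f(\varphi(w_x^{ac})) - f^\infty\varphi(w_x^{ac})]\,\dd\Lb^m$ for all $w \in BV(U,\R^n)$: the two functionals share their singular contribution and differ only on the absolutely continuous part through the non-increasing convex function $g(t) := f(t) - f^\infty t$, which is bounded above by $f(0)$ but may be unbounded below.

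Let $v$ be the unique minimizer of the auxiliary ROF functional $\lambda\F^\psi(\cdot) + \tfrac12\int|\cdot - h|^2\,\dd\Lb^m$. By Theorem~\ref{thm:homo} applied to $\psi$, $|v_x| \leq |h_x|$ on $U$, and intersecting with a common Lebesgue-null set supporting both singular parts yields $|v_x^s| \leq |h_x^s|$ on $U$ as well. The key step is now to establish a localized comparison of the form $\int_V \psi(\frac{\dd u_x^s}{\dd|u_x^s|})\,\dd|u_x^s| \leq \int_V \psi(\frac{\dd v_x^s}{\dd|v_x^s|})\,\dd|v_x^s|$ for every open subinterval $V \Subset U$. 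This is achieved by a local competitor construction in which $u$ is replaced on $V$ by $v$, glued to $u$ through a thin boundary layer matching the traces at $\partial V$; the associated change in $\E$ is estimated so that the change in the singular contribution (which is of order one) dominates the change in the absolutely continuous discrepancy carried by $g$ and the $L^2$ fidelity term (both of lower order for thin layers). Converting $\psi$-weighted masses back to Euclidean ones on both sides of the resulting inequality via $c_\varphi^-|\xi| \leq \varphi(\xi) \leq c_\varphi^+|\xi|$ then produces $|u_x^s|(V) \leq (c_\varphi^+/c_\varphi^-)^2|v_x^s|(V) \leq (c_\varphi^+/c_\varphi^-)^2|h_x^s|(V)$, and inner regularity of Borel measures promotes this to arbitrary Borel $E \subset U$.

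The main obstacle lies in justifying the local competitor step. A naive global comparison, obtained by adding the two minimality inequalities $\E(u) \leq \E(v)$ and $\E^\psi(v) \leq \E^\psi(u)$, causes the $L^2$ terms to cancel and yields only $(\F - \F^\psi)(u) \leq (\F - \F^\psi)(v)$, which is an inequality purely on the absolutely continuous parts and carries no information on the singular parts. The remedy is to localize on $V$ with trace matching on $\partial V$ so that the AC contribution of the discrepancy $g$ is quantitatively controlled by the geometry of the layer while the singular contribution is first order, and the $L^2$-fidelity change is of order $|V|$. The squared anisotropy ratio $(c_\varphi^+/c_\varphi^-)^2$ emerges because one factor arises when bounding $|u_x^s|$ from above by the $\psi$-mass of $u_x^s$, and the other when bounding the $\psi$-mass of $v_x^s$ from above by $|v_x^s|$. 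The one-dimensional setting $m=1$ is essential, since $BV$ maps on an interval are determined by an endpoint trace and a derivative measure, which makes the local gluing and the quantitative control over the boundary layer elementary.
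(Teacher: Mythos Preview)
Your approach has a genuine gap in the ``local competitor'' step, and it is not repairable along the lines you sketch.

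First, the claim that the absolutely continuous discrepancy and the $L^2$ fidelity change are ``of lower order for thin layers'' conflates two different scales. You keep the interval $V$ fixed and shrink only the gluing layer near $\partial V$. But then the terms $\int_V\big[F(v_x^{ac})-F(u_x^{ac})\big]\,\dd\Lb^1$ and $\tfrac12\int_V\big[|v-h|^2-|u-h|^2\big]\,\dd\Lb^1$ are both of size $O(|V|)$ and do not become small; they are of the \emph{same} order as the singular contribution $\psi(u_x^s)(V)$ you are trying to control. If instead you also shrink $V$, then $\psi(u_x^s)(V)\to 0$ as well for any diffuse part of $u_x^s$, so nothing is gained. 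Second, the contribution of the gluing layer is governed by $F^\infty$ of the trace mismatch $|u(\partial V)-v(\partial V)|$, which is $O(1)$ in general: there is no mechanism forcing $u$ and $v$ to be close at the endpoints of $V$. Third, and more structurally, since $F^\infty=\psi$, any comparison that pairs the $\E$-minimality of $u$ with the $\E^\psi$-minimality of $v$ will see the singular parts cancel identically---the very obstruction you identified for the global comparison persists locally, because the cancellation is pointwise on the singular measure, not a global artifact.

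Finally, even granting your claimed inequality $\psi(u_x^s)(V)\leq\psi(v_x^s)(V)$, the conversion to Euclidean masses yields
\[
c_\varphi^-\,|u_x^s|(V)\ \leq\ \psi(u_x^s)(V)/f^\infty\ \leq\ \psi(v_x^s)(V)/f^\infty\ \leq\ c_\varphi^+\,|v_x^s|(V)\ \leq\ c_\varphi^+\,|h_x^s|(V),
\]
which gives only the ratio $c_\varphi^+/c_\varphi^-$, not its square; your accounting of ``one factor from each side'' is off by one. The paper's proof does not go through an auxiliary minimizer at all: it works directly with the Euler--Lagrange equation for smoothed functionals $\E_{\eps,\eta}$, tests it with $(\zeta^2 G_k(u_x^{\eps,\eta}))_x$ for a carefully chosen $G_k(p)=g_k(\widetilde\varphi_\eta(p))\,p$, and exploits the identity $p\cdot D^2\varphi(p)=0$ to kill the dangerous cross terms. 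The squared ratio there arises from repeated use of $c_\varphi^-|p|\leq\varphi(p)\leq c_\varphi^+|p|$ in the resulting integral estimates (Propositions~\ref{prop: general lhs eps} and~\ref{prop-rhs-eps-eta}), not from a two-sided mass conversion.
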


\begin{thm} \label{thm:regular norm} 
	Let $F = f \circ \varphi$, where $f\colon [0, \infty[ \to [0, \infty[$ is strictly convex, increasing and of linear growth, and $\varphi\colon \R^n \to [0, \infty[$ is a~coercive anisotropy which is strictly convex in the sense of Reshetnyak (see Definition \ref{def: Reshetnyak}). Moreover, assume that $\varphi$ is differentiable in $\R^n \setminus \{0\}$ and that $f$ is differentiable with $f'(0) = 0$. Suppose that $h \in BV(U, \R^n)$ for an open interval $U \subset I$. Then the minimizer $u$ of $\E$ satisfies $|u_x^{s}| \leq |h_x^{s}|$ in the sense of Borel measures on $U$, i.\,e.,
	\begin{equation} \label{eq: regular norm}
		|u_x^{s}|(E) \leq |h_x^{s}|(E) \text{ for any Borel set } E \subset U.
	\end{equation}
\end{thm}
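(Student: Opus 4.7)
My plan is to refine Theorem~\ref{thm:constants}: it already yields $|u_x^s|\ll|h_x^s|$ on $U$, so one may write $u_x^s = \rho\,\theta_u\,|h_x^s|$ and $h_x^s = \theta_h\,|h_x^s|$ with Borel Euclidean-unit fields $\theta_u,\theta_h$ and density $\rho \in L^\infty(|h_x^s|)$ bounded by $(c_\varphi^+/c_\varphi^-)^2$; the conclusion reduces to $\rho\le 1$ $|h_x^s|$-a.e.

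The first step is to derive the Euler--Lagrange characterization of $u$: there exists a continuous dual field $z \in W^{1,2}(I,\R^n)$ with $\lambda z_x = u-h$ and the local calibration identity $\int_B z\cdot du_x = F(u_x)(B)$ for every Borel $B\subset I$. Using $F = f\circ\varphi$ together with the regularity hypotheses, this pins down $z$ pointwise. Since $f'(0)=0$ one checks that $\partial F(0)=\{0\}$, so $z(x)=0$ wherever $u_x^{ac}(x)=0$; where $u_x^{ac}(x)\neq 0$, differentiability of $f$ and of $\varphi$ off the origin give $z(x)=f'(\varphi(u_x^{ac}(x)))\,\nabla\varphi(u_x^{ac}(x))$; and on $\operatorname{supp}(u_x^s)$, the singular calibration combined with $F^\infty=f^\infty\varphi$ forces $z(x) = f^\infty\,\nabla\varphi(\theta_u(x))$.

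The Reshetnyak strict convexity of $\varphi$ together with its differentiability makes $\nabla\varphi$ a homeomorphism between the unit $\varphi$-sphere and its polar. Hence on $\operatorname{supp}(u_x^s)$ the direction $\theta_u(x)$ is uniquely --- and continuously --- recovered from $z(x)$, and since $z$ is continuous this determines a continuous direction field on a neighborhood of $\operatorname{supp}(u_x^s)$. Combining this rigidity with the minimality of $u$ at points of the common singular support should force the alignment $\theta_u(x)\parallel\theta_h(x)$ $|h_x^s|$-a.e.\ there.

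To finally deduce $\rho\le 1$ I would upgrade the estimate of Theorem~\ref{thm:constants} to its anisotropic version $\varphi(u_x^s)\le\varphi(h_x^s)$ (as nonnegative measures); combined with the alignment $\theta_u\parallel\theta_h$, and with $\varphi(\theta_u)=\varphi(\theta_h)$ for parallel Euclidean-unit vectors, this forces $\rho\le 1$. I expect the anisotropic sharpening to come from re-examining the competitors used in the proof of Theorem~\ref{thm:constants} and adapting them to the $\varphi$-geometry rather than the Euclidean one, precisely so as to eliminate the $c_\varphi^\pm$-losses that enter in the general case. The main obstacle is executing this last step rigorously in the vectorial BV setting: the first-order variation of $\E$ along perturbations of $u_x^s$ cancels identically against the calibration, so the gain must be extracted from the strict convexity of $f$ at second order, tracked against a measure-theoretic competitor supported in the singular set of $h$. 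The three added hypotheses ($f$ strictly convex and differentiable with $f'(0)=0$, $\varphi$ differentiable and Reshetnyak strictly convex) are what guarantee single-valued, continuous subgradient selections and are therefore expected to be exactly what is needed to close this gap.
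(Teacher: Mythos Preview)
Your proposal contains two genuine gaps, both of which you essentially acknowledge.

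First, the alignment claim $\theta_u\parallel\theta_h$ $|h_x^s|$-a.\,e.\ is not established. The calibration identity and the formula $z(x)=f^\infty\nabla\varphi(\theta_u(x))$ on $\operatorname{supp}(u_x^s)$ constrain only the direction of $u_x^s$; nothing in the Euler--Lagrange system for $u$ involves the \emph{direction} of $h_x^s$, which enters only through the scalar equation $\lambda z_x=u-h$. ``Minimality of $u$ at points of the common singular support'' is not a notion you make precise, and I do not see a mechanism that would force $\theta_h$ to align with $\theta_u$ in the vectorial setting. Note also that ``parallel Euclidean-unit vectors'' allows $\theta_u=-\theta_h$, and for a non-even anisotropy $\varphi(\theta_u)\neq\varphi(\theta_h)$ in that case, so the last implication would fail even if alignment held in that weaker sense.

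Second, the anisotropic sharpening $\varphi(u_x^s)\le\varphi(h_x^s)$ is, as you say yourself, the main obstacle; you give no argument for it. Re-examining the competitors in Theorem~\ref{thm:constants} does not obviously remove the $c_\varphi^\pm$ losses: those arise precisely because the test function $G_k(p)=g_k(\ourphi(p))\,p$ mixes the anisotropic quantity $\ourphi(p)$ with the Euclidean vector $p$, and that structure is what makes the cross term $\xi\cdot D^2\ourphi(\xi)=0$ vanish. Replacing $p$ by an anisotropic direction breaks that cancellation.

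The paper's proof avoids both issues entirely. It keeps the same approximation scheme and test function as in Section~\ref{sec:coercive}, but exploits the extra hypotheses to pass to the limit \emph{without} estimating between $\varphi$ and $|\cdot|$. The key device is the spherical compactification $\overline{\R^n}$: under your hypotheses, $\overline{DF}$ extends to a homeomorphism of $\overline{\R^n}$ onto its image (Lemma~\ref{lem: DF homeo}), and since $DF(u_x^{ac})\in W^{1,2}_0$ is continuous, the composition $\overline{u}_x^{ac}:=\overline{DF}^{-1}\circ DF(u_x^{ac})$ is a \emph{continuous} $\overline{\R^n}$-valued representative of $u_x^{ac}$. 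One then shows (Lemma~\ref{lem: ac sphere}) that $\overline{u}_x^{ac}(x)\in\mathbb S^{n-1}_\infty$ for $|u_x^s|$-a.\,e.\ $x$ --- this is the rigorous form of your observation that $z=f^\infty\nabla\varphi(\theta_u)$ on $\operatorname{supp}(u_x^s)$. The continuity of $\overline{u}_x^{ac}$ lets one pass to the limit $\eps\to 0$ in Proposition~\ref{prop: inequality with eps} by freezing the factor $b_k(\overline{u}_x^{ac})$ and using only weak* convergence of $|u_x^\eps|$ and $|h_x^\eps|$; the limit $k\to\infty$ then kills the absolutely continuous contributions and leaves exactly $\int\zeta^2 b\,d|u_x^s|\le\int\zeta^2 b\,d|h_x^s|$ with $b=1/\varphi(\omega)$ on the sphere at infinity. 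Multiplying back by $\varphi(\omega)$ yields $|u_x^s|\le|h_x^s|$ directly, with no alignment hypothesis and no separate anisotropic estimate.
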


\noindent
Note that under the assumptions of Theorems \ref{thm:homo}, \ref{thm:constants} and \ref{thm:regular norm}, $F$ is indeed a convex function of linear growth.

The basic structure of the proofs of the three theorems is similar---a~suitable two-level approximation $\E_{\eps, \eta}$ of the functional $\E$ is used. On the more regular level, the minimizer $u^{\eps, \eta}$ satisfies the Euler--Lagrange equation 
\[u^{\eps, \eta} - h^\eps = \lambda DF_\eta(u_x^{\eps, \eta})_x \text{ in } I, \quad DF_\eta(u_x^{\eps, \eta}) = 0 \text{ on } \partial I \]
in strong sense. We test the equation with a suitable function of the form $(\zeta^2 G(u_x^{\eps, \eta}))_x$, where $\zeta \in C^1_c(I)$, while $G$ is chosen so that $G(w_x)_x \cdot DF_\eta(w_x)_x \geq 0$ for $w \in W^{2,2}(I, \R^n)$ and $G(\xi) \cdot \xi \sim |\xi|$ for large $|\xi|$. By virtue of the latter property, after integration by parts on the l.\,h.\,s., a quantity of order $|u_x^{\eps, \eta}|$ appears. It is essential for our argument to take $G$ of the form $G(\xi) = g(\xi) \xi$ with $g \colon \R^n \to \R$. Due to such a~choice, the bulk of terms involving $\zeta_x$ vanishes, owing to equality $\xi \cdot D^2\varphi(\xi) = 0$, which follows from homogeneity of $\varphi$. In the homogeneous case $F = \varphi$, performing this procedure with a relatively simple $g$ readily leads to the strongly local estimate \eqref{homo_est}. 

In the inhomogeneous case $F = f \circ \varphi$, the argument becomes more involved and, expectedly, yields a bound only on the singular part of $u_x$. One of essential difficulties in the proof of such estimates is posed by very weak continuity properties of the operator $w \mapsto w_x^s$ on $BV(I, \R^n)$. Notably, maps such as $w \mapsto |w_x^s|(\Omega)$ fail to be semicontinuous w.\,r.\,t.\ weak* as well as strict or area-strict convergence in $BV(I, \R^n)$. Therefore, we need to work instead with quantities of type 
\[ (|w_x| - k)_+ =  \int (|w_x^{ac}| - k)_+ \dd \Lb^1 +  |w_x^s| \]
and obtain estimates for their decay as $k \to \infty$. However, our previously described procedure delivers estimates for more complicated, non-convex functions of $u_x$ which again fail to be lower semicontinuous.    
Nonetheless, by exploiting equivalence between $\varphi$ and $|\cdot|$, we can deduce estimates for a convex function of $u_x$, but only up to a~multiplicative constant.

We note that such a constant is undesirable in the context of image processing, since in applications the minimization procedure tends to be iterated many times (in the spirit of the  \emph{minimizing movements scheme}---we recall that notion in the next paragraph). In the case when $F$ is strictly convex and differentiable, we obtain a better estimate \eqref{eq: regular norm}. In the proof we use spherical compactification of $\R^n$ (see Section~\ref{sec: prelim}) and the fact that the derivative $DF$ is a~homeomorphism to improve the convergence of approximate minimizers. This is one of the reasons why we need additional assumptions on $F$ in this case. However, we do not know whether \eqref{eq: regular norm} can fail otherwise.

Let us recall that the convex, lower semicontinuous function $\F$ on the Hilbert space $L^2(I, \R^n)$ defines a unique gradient flow \cite{maximaux}. In other words, for any $v_0 \in L^2(I, \R^n)$ there exists a unique function $v \in C([0, \infty[, L^2(I, \R^n))\cap W^{1,2}_{loc}(0,T;L^2(I, \R^n))$ such that 
\begin{equation} \label{grad_flow} 
	v_t(t) \in - \partial \F(v(t)) \quad \text{ for a.\,e.\ } t > 0, \qquad v(0) = v_0.  
\end{equation}  
In the case $F = |\cdot|$ this is a vectorial version of the so-called \emph{total variation flow}, formally given by the equation 
\[v_t = \left(\frac{v_x}{|v_x|}\right)_x \text{ in } ]0, \infty[ \times I, \quad v = 0 \text{ on } ]0, \infty [ \times \partial I,\]
cf.\ \cite{acmbook}. The map associating the minimizer of $\E_h^\lambda$ to a given $h$ coincides with the resolvent operator for the subdifferential $- \partial \F$. This is the basis of the \emph{minimizing movements scheme}: for a~given $N \in \N$ we iteratively define \[v^N_0 = v_0, \qquad v^N_j = \argmin E_{v^N_{j-1}}^{1/N} \quad \text{for } j=1,2,\ldots\]
Then, for a.\,e.\ $t > 0$ we have 
\begin{equation} \label{grad_flow_conv} 
	v^N_j \to v(t) \quad \text{in } L^2(I, \R^n) \quad \text{if } N \to \infty,\ j/N \to t.
\end{equation}     
Let $F=\varphi$ be a~coercive anisotropy. Then, by Theorem \ref{thm:homo}, for all $N,j \in \N$, it is true that $v^N_j \in BV(U,\R^n)$ and 
\begin{equation*} 
	|(v^N_j)_x |(V) \leq |(v^N_{j-1})_x |(V) \leq \ldots \leq |(v_0)_x |(V)
\end{equation*} 
for any open $V \subset U$. Using \eqref{grad_flow_conv} and Theorem \ref{thm-gs-lsc}, we deduce $|v_x(t)|(V) \leq |(v_0)_x|(V)$ for any open $V \subset U$ and a.\,e.\ $t>0$. Thus, we obtain
\begin{cor}\label{thm: grad flows} 
	Suppose that $F = \varphi$ is a coercive anisotropy. If $v_0 \in BV(U, \R^n)$ with open $U \subset I$, then $|v_x(t)| \leq |v_{0,x}|$ in the sense of Borel measures on $U$, i.\,e.
	\begin{equation}\label{gf_thm_1}  
	|v_x(t)|(E) \leq |v_{0,x}|(E) \quad \text{for any Borel set } E \subset U.
	\end{equation} 
\end{cor}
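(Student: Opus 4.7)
My plan is to treat the corollary as an elementary corollary of Theorem \ref{thm:homo} via the minimizing movements scheme, exactly along the lines already sketched in the paragraph above the statement. Fix an open $V \subset U$. I proceed by induction on $j$ to show that each iterate $v^N_j$ lies in $BV(V, \R^n)$ and that
\[|(v^N_j)_x|(V) \leq |(v^N_{j-1})_x|(V).\]
The base case $j=0$ is the hypothesis $v_0 \in BV(U, \R^n)$. For the inductive step, $v^N_j$ is by construction the unique minimizer of $\E_{v^N_{j-1}}^{1/N}$, so once $v^N_{j-1} \in BV(V, \R^n)$, Theorem \ref{thm:homo} applied with data $h = v^N_{j-1}$ and parameter $\lambda = 1/N$ yields $|(v^N_j)_x|(E) \leq |(v^N_{j-1})_x|(E)$ for every Borel $E \subset V$, in particular $v^N_j \in BV(V, \R^n)$. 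Chaining these bounds gives $|(v^N_j)_x|(V) \leq |v_{0,x}|(V)$ for every $N$ and $j$.

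Next, I pass to the limit along $N \to \infty$, $j/N \to t$. For a.e. $t > 0$, convergence \eqref{grad_flow_conv} supplies $v^N_j \to v(t)$ in $L^2(I, \R^n)$, hence in $L^1_{loc}(V, \R^n)$. Applying the Goffman--Serrin lower semicontinuity result (Theorem \ref{thm-gs-lsc}) to the functional $w \mapsto |w_x|(V)$ on the open set $V$, I obtain
\[|v_x(t)|(V) \;\leq\; \liminf_{N \to \infty} |(v^N_j)_x|(V) \;\leq\; |v_{0,x}|(V),\]
so \eqref{gf_thm_1} holds for every open $V \subset U$ and a.e. $t > 0$.

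Finally, I upgrade the inequality from open sets to arbitrary Borel subsets by outer regularity. Given a Borel $E \subset U$, the finite Radon measure $|v_{0,x}|$ on $U$ satisfies $|v_{0,x}|(E) = \inf\{|v_{0,x}|(V) : V \text{ open},\, E \subset V \subset U\}$. For each such admissible $V$, monotonicity together with the open-set estimate gives $|v_x(t)|(E) \leq |v_x(t)|(V) \leq |v_{0,x}|(V)$; taking the infimum over $V$ delivers the desired bound \eqref{gf_thm_1}.

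The only real subtlety---the main obstacle in an otherwise routine argument---is the lower semicontinuity step on a localized open set $V \subset U$ rather than on all of $I$: one must verify that Theorem \ref{thm-gs-lsc} indeed applies to the restricted total variation functional along sequences converging in $L^1_{loc}(V)$. Everything else is either a direct quotation of Theorem \ref{thm:homo} (iterated) or standard measure theory.
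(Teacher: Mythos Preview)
Your proposal is correct and follows essentially the same approach as the paper: iterate Theorem~\ref{thm:homo} along the minimizing movements scheme to obtain $|(v^N_j)_x|(V) \leq |v_{0,x}|(V)$ for open $V \subset U$, pass to the limit via \eqref{grad_flow_conv} and the lower semicontinuity of Theorem~\ref{thm-gs-lsc}, and then extend to Borel sets by outer regularity. You are in fact slightly more explicit than the paper, which leaves the final outer-regularity step implicit.
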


This generalizes an analogous result for the scalar 1D total variation flow from \cite{bcno, BonforteFigalli}. We note also a recent paper \cite{MollSmarrazzo2022}, where similar estimate was obtained for more general parabolic equations in the scalar 1D case $m=n=1$. Moreover, the authors provide conditions for instantaneous regularization $BV \to W^{1,1}_{loc}$ and $L^1 \to W^{1,1}_{loc}$.

We are unable to transfer Theorems \ref{thm:constants} and \ref{thm:regular norm} to the gradient flow setting since $|u_x^s|$ does not have good semicontinuity properties. On the other hand, intermediate estimates \eqref{eq:constants final} and \eqref{eq: before k limit} do not behave well under iteration. 

In the next section we recall some facts that will be used in the sequel. In Section~\ref{sec: approximate} approximate functionals are defined and in Sections \ref{sec:homo}, \ref{sec:coercive}, \ref{sec:regular} we prove Theorems \ref{thm:homo}, \ref{thm:constants} and\ref{thm:regular norm}, respectively. 

\section{Preliminaries} \label{sec: prelim}

Let $d$ be any positive integer. Throughout the paper, $\varrho \in C^\infty_c(\R^d)$ denotes a~positive, radially symmetric function such that $\int_{\R^d} \varrho \dd \Lb^d = 1$ and $\varrho_\eps(p) = \eps^{-d} \varrho(p/\eps)$. We will say that measures $\mu_j \in M(\Omega, \R^d)$ converge weakly* to $\mu \in M(\Omega, \R^d)$ and write $\mu_j \weaklystar \mu$, if for any $\zeta \in C_c(\Omega)$ it is true that $\lim_{j \to \infty} \int_\Omega \zeta \dd \mu_j = \int_\Omega \zeta \dd \mu$.

\subsection*{Functions of measures}

Definition \eqref{Fwx} from the Introduction can be applied to any Radon measure, not necessarily a~derivative of a~$BV$ function. Throughout this subsection, $G$ denotes~a convex function $G\colon \R^d \to [0, \infty[$ of linear growth. Given such a~function and a~measure $\mu \in M(\Omega, \R^d)$, we define measure $G(\mu)$ by 
\begin{equation} \label{Fmu}
	\dd G(\mu) = G(\mu^{ac}) \dd \Lb^m + G^{\infty}\left(\tfrac{\mu^s}{|\mu^s|}\right) \dd |\mu^s|, \quad \text{where } G^{\infty}(\nu) = \lim_{t \to \infty} \tfrac{1}{t}G(t \nu) \text{ for } \nu \in \mathbb S^{n-1}
\end{equation}
and $\mu^s/ |\mu^s|$ denotes the Radon--Nikodym derivative. The function $G^\infty$ is often called \emph{the~recession function} of $G$. In the following paragraphs, facts about properties of $G(\mu)$ are collected.

\begin{thm}\cite[p.\ 172]{GS64} \label{thm-gs-lsc}
	Let $\mu_j, \mu \in M(\Omega, \R^d)$ and assume that $\mu_j \weaklystar \mu$. Then for any open $U \subset \Omega$,
	\begin{equation*}
		G(\mu)(U) \leq \liminf_{j \to \infty} G(\mu_j)(U).
	\end{equation*}
\end{thm}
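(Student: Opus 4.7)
The plan is to exploit the convex-duality representation of $G$ together with a passage to the limit along a family of affine minorants. Since $G\colon \R^d \to [0,\infty[$ is convex and of linear growth, there is a countable family $(a_k,b_k)_{k\in\N}\subset \R^d\times\R$ of affine minorants, i.e.\ $a_k\cdot q + b_k \leq G(q)$ for every $q\in\R^d$, such that $G(p)=\sup_k(a_k\cdot p+b_k)$ for every $p$. Setting $p=t\nu$ with $|\nu|=1$, dividing by $t$ and sending $t\to\infty$ yields $G^\infty(\nu) = \sup_k a_k\cdot \nu$ for every $\nu\in\mathbb S^{d-1}$.

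The first concrete step is the following deterministic bound: for any affine minorant $(a,b)$ and any $\phi\in C_c(U)$ with $0\leq\phi\leq 1$, decomposing a measure $\nu\in M(\Omega,\R^d)$ into its AC part $\nu^{ac}\Lb^m$ and singular part $\sigma\,|\nu^s|$ with $|\sigma|=1$ and using the pointwise inequalities $a\cdot\nu^{ac}(x)+b\leq G(\nu^{ac}(x))$ and $a\cdot\sigma(x)\leq G^\infty(\sigma(x))$, I would get
\[
\int_U \phi\,\mathrm d(a\cdot\nu+b\Lb^m) \;\leq\; \int_U\phi\,\mathrm d G(\nu) \;\leq\; G(\nu)(U).
\]
Summing over a finite admissible collection $\{(a_i,b_i,\phi_i)\}$ with $\phi_i\geq 0$ and $\sum_i\phi_i\leq 1$ preserves the inequality. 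Applied to each $\mu_j$ in the sequence, this furnishes the master bound
\[
\sum_i\int_U\phi_i\,\mathrm d(a_i\cdot\mu_j+b_i\Lb^m) \;\leq\; G(\mu_j)(U).
\]

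Next I would pass $j\to\infty$. Each $\phi_i$ belongs to $C_c(U)\subset C_c(\Omega)$, so weak* convergence of $\mu_j$ gives $\int \phi_i a_i\cdot\mathrm d\mu_j\to\int \phi_i a_i\cdot\mathrm d\mu$, while the $\Lb^m$-terms are constant in $j$. Hence for every admissible finite collection,
\[
\sum_i\int_U\phi_i\,\mathrm d(a_i\cdot\mu+b_i\Lb^m)\;\leq\;\liminf_{j\to\infty}G(\mu_j)(U).
\]
The proof reduces to showing that the supremum of the left-hand side over all admissible $\{(a_i,b_i,\phi_i)\}$ equals $G(\mu)(U)$. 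The inequality "$\leq$" is precisely the deterministic bound above applied to $\mu$ instead of $\mu_j$. For the reverse, given $\varepsilon>0$, I would perform a Borel-measurable selection: on the $\Lb^m$-typical set pick a measurable index $k_{ac}(x)$ with $a_{k_{ac}(x)}\cdot\mu^{ac}(x)+b_{k_{ac}(x)}\geq G(\mu^{ac}(x))-\varepsilon$, and on the $|\mu^s|$-typical set pick $k_s(x)$ with $a_{k_s(x)}\cdot\sigma(x)\geq G^\infty(\sigma(x))-\varepsilon$. Truncating the range of indices to a finite set and using inner regularity of the finite Radon measure $|\mu|+\Lb^m\mres U$, I would partition $U$ (up to an $\varepsilon$-small error) into finitely many pairwise disjoint compact sets $K_i$ on which a single affine minorant $(a_i,b_i)$ achieves the desired approximation, and then approximate $\chi_{K_i}$ by $\phi_i\in C_c(U)$ with disjoint supports and $0\leq\phi_i\leq 1$.

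I expect the main obstacle to be this last identification: managing simultaneously the AC and singular contributions with a \emph{single} partition and continuous partition of unity, while controlling the boundary "strips" where $\sum\phi_i<1$ against both $\mu^{ac}$ and $|\mu^s|$. This is classical Reshetnyak-style bookkeeping, relying on inner regularity of $|\mu|$ and absence of $|\mu^s|$ mass on the thin transition regions, but it is the only non-routine part of the proof.
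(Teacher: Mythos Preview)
The paper does not supply its own proof of this statement; it is quoted verbatim from Goffman--Serrin \cite[p.\ 172]{GS64} and used as a black box. So there is no in-paper argument to compare against.

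Your approach via convex duality (representing $G$ and $G^\infty$ as suprema of countably many affine functions, testing against finite subordinate families $\{(a_i,b_i,\phi_i)\}$, and passing to the limit by weak* convergence) is the standard Reshetnyak-type proof and is correct in outline. Two small remarks. First, to ensure that the countable family $(a_k,b_k)$ simultaneously recovers $G$ and $G^\infty$, you should take the $a_k$ dense in $\operatorname{dom} G^*$ (which is bounded by linear growth) with $b_k=-G^*(a_k)$; then $G^\infty$, being the support function of $\overline{\operatorname{dom} G^*}$, is indeed $\sup_k a_k\cdot\nu$. Second, the ``non-routine'' identification you flag---that the supremum over admissible collections equals $G(\mu)(U)$---is exactly the dual representation of the Goffman--Serrin functional; your sketch via measurable selection, truncation to finitely many indices, and inner regularity of $|\mu|+\Lb^m\mres U$ is the right skeleton, though in a fully written proof you would want to be explicit that the error on the transition strips is controlled by $(|\mu|+\Lb^m)(U\setminus \bigcup_i K_i)$ together with the uniform bound $|a_k|\leq C_G^+$, $|b_k|\leq G(0)$.
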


\begin{cor} \label{cor-gs-lsc-zeta}
	Let $\mu_j, \mu \in M(\Omega, \R^n)$ and assume that $\mu_j \weaklystar \mu$. Then for any non-negative $\zeta \in C_c(\Omega)$,
	\begin{equation*}
		\int_\Omega \zeta \dd G(\mu) \leq \liminf_{j \to \infty} \int_\Omega \zeta \dd G(\mu_j).
	\end{equation*}
\end{cor}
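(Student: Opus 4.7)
The plan is to derive the corollary from Theorem \ref{thm-gs-lsc} via the layer cake representation of the integral. The key observation is that since $\zeta \in C_c(\Omega)$ is non-negative and continuous, each superlevel set $U_t := \{\zeta > t\}$ is open with $\overline{U_t} \subset \supp \zeta \Subset \Omega$, so Theorem \ref{thm-gs-lsc} applies to $U_t$ for every $t > 0$.

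First I would write, using the layer cake formula applied to the non-negative Borel measures $G(\mu)$ and $G(\mu_j)$,
\[
\int_\Omega \zeta \dd G(\mu) = \int_0^\infty G(\mu)(U_t) \dd t, \qquad \int_\Omega \zeta \dd G(\mu_j) = \int_0^\infty G(\mu_j)(U_t) \dd t.
\]
Both representations make sense because $\zeta$ is bounded (so the $t$-integration is effectively over a bounded interval $[0, \|\zeta\|_\infty]$) and because the $G$-measures are finite on the compact set $\supp \zeta$ by linear growth of $G$.

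Next, I would invoke Theorem \ref{thm-gs-lsc} pointwise in $t$, obtaining $G(\mu)(U_t) \leq \liminf_{j\to\infty} G(\mu_j)(U_t)$ for each $t > 0$. Then Fatou's lemma applied to the Lebesgue integral in $t$, which is legitimate because the integrands are non-negative, yields
\[
\int_0^\infty G(\mu)(U_t) \dd t \leq \int_0^\infty \liminf_{j\to\infty} G(\mu_j)(U_t) \dd t \leq \liminf_{j\to\infty} \int_0^\infty G(\mu_j)(U_t) \dd t.
\]
Combining with the layer cake identities from the first step gives the claim.

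There is no real obstacle here; the only point demanding care is verifying that the layer cake representation applies, which reduces to checking that $G(\mu)$ is a Radon measure finite on compacts (true by linear growth of $G$) and that $U_t$ is Borel, indeed open by continuity of $\zeta$. Everything else is a direct application of Fatou and Theorem \ref{thm-gs-lsc}.
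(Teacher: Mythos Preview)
Your proof is correct and considerably more direct than the paper's. You use the layer-cake representation $\int_\Omega \zeta \dd G(\mu) = \int_0^\infty G(\mu)(\{\zeta>t\})\dd t$, apply Theorem~\ref{thm-gs-lsc} to each open superlevel set $U_t$, and finish with Fatou's lemma. The paper instead extracts a subsequence along which $G(\mu_{j_\ell})\weaklystar\nu$ for some measure $\nu$, then uses the Besicovitch covering theorem to find a Vitali family of balls with $\nu$- and $G(\mu)$-null boundaries; on each such ball weak* convergence gives $\nu(V)=\lim G(\mu_{j_\ell})(V)\geq G(\mu)(V)$ by Theorem~\ref{thm-gs-lsc}, whence $\nu\geq G(\mu)$ as Borel measures and the conclusion follows by integrating $\zeta$ against both. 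Your argument avoids both the subsequence extraction and the covering lemma entirely; the paper's route, though longer, establishes the intermediate fact that every weak* cluster point of $\{G(\mu_j)\}$ dominates $G(\mu)$, which is not needed here but could be of independent use. The only small point worth making explicit in your write-up is that $t\mapsto G(\mu_j)(U_t)$ is monotone, hence measurable, so Fatou applies without further justification.
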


\begin{proof}
	Suppose that $\liminf_{j \to \infty} \int_\Omega \zeta \dd G(\mu_j) < + \infty$ (if not, the inequality clearly holds). Choose a~subsequence $\mu_{j_\ell}$ such that
\begin{equation*}
	\lim_{\ell \to \infty} \int_\Omega \zeta \dd G(\mu_{j_\ell}) = \liminf_{j \to \infty} \int_\Omega \zeta \dd G(\mu_j).
\end{equation*}
Since $G$ is of linear growth, the sequence of measures $\left \{ G(\mu_{j_\ell}) \right \}_\ell$ is bounded and hence there exists a~subsequence (which we do not relabel) weakly* convergent to a~measure $\nu$. Both $\nu$ and $G(\mu)$ are finite on $\Omega$. Therefore, at each point $x \in \Omega$ there exists a~family of open balls $\left \{ V_x^i \right \}_{i=1}^{N(x)}$ centered at $x$ with $\mathrm{diam}\left ({V_x^i} \right) \to 0$ as $i \to \infty$, for which 
\[
	\nu(\partial V) = G(\mu)(\partial V) = 0.
\]

The family $\left \{ \overline{V_x^i}\colon \, x \in \Omega, \, i = 1, \ldots N(x) \right \}$ of closed balls satisfies assumptions of the~corollary to Besicovitch's covering theorem \cite[Section 1.5, Corollary I]{evansgariepy}. For any open $U \subset \Omega$, it is then possible to choose a~countable subfamily $\mathcal{G}$ of disjoint balls such that
\begin{equation} \label{eq: family G}
	\mathsmaller{\bigcup} \mathcal{G} \subset U \text{ and } \nu\left( U \setminus \mathsmaller{\bigcup} \mathcal{G} \right) + G(\mu)\left( U \setminus \mathsmaller{\bigcup} \mathcal{G} \right) = 0.
\end{equation}
For any ball $V$ with $\overline{V} \in \mathcal{G}$, it is true that $G(\mu_{j_\ell})(V) \to \nu(V)$ and using Theorem \ref{thm-gs-lsc}, we may write
\begin{equation*}
	\nu (\overline{V}) = \nu(V) = \lim_{\ell \to \infty} G(\mu_{j_\ell})(V) \geq G(\mu)(V) = G(\mu)(\overline{V}).
\end{equation*}
Summing over balls from the family $\mathcal{G}$ and using \eqref{eq: family G} results in the estimate $\nu(U) \geq G(\mu)(U)$ for any open $U \subset \Omega$, which implies that
\[ 
\nu(E) \geq G(\mu)(E) \text{ for any Borel } E \subset \Omega.
\]
As a result, in view of the choice of the subsequence $\mu_{j_\ell}$, one gets
\[
\int_\Omega \zeta \dd G(\mu) \leq \int_\Omega \zeta \dd \nu = \lim_{\ell \to \infty} \int_\Omega \zeta \dd G(\mu_{j_\ell}) = \liminf_{j \to \infty} \int_\Omega \zeta \dd G(\mu_j).
\]
\end{proof}

\begin{lemma}\cite[p.\ 171]{GS64} \label{lem: jensen GS}
Let $U \subset \R^m$ be an open set and $b\colon U \to [0, \infty[$ an integrable function with $\int_U b \dd \Lb^m \neq 0$. For $\mu \in M(U, \R^d)$, it is true that
\[
	G \left( \frac{\int_U b \dd \mu}{\int_U b \dd \Lb^m} \right) \leq \frac{\int_U b \dd G(\mu)}{\int_U b \dd \Lb^m}. 
\]
\end{lemma}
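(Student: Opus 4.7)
The plan is to reduce the Jensen-type inequality for measures to the elementary fact that a convex, everywhere finite function $G$ on $\R^d$ is the pointwise supremum of its affine minorants, and then to evaluate both sides on any such minorant.

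First I would fix an affine function $\ell(\xi) = a \cdot \xi + c$ with $a \in \R^d$, $c \in \R$ satisfying $\ell \leq G$ pointwise, and establish the accompanying inequality $a \cdot \xi \leq G^{\infty}(\xi)$ for all $\xi \in \R^d$. This follows by dividing $a \cdot (t\xi) + c \leq G(t\xi)$ by $t > 0$ and sending $t \to +\infty$; the finiteness of $G^{\infty}$ is ensured by the linear growth of $G$. Writing the Lebesgue decomposition $\dd \mu = \mu^{ac} \dd \Lb^m + \sigma \dd |\mu^s|$ with $\sigma = \dd \mu^s / \dd |\mu^s|$, the pointwise inequalities $a \cdot \mu^{ac}(x) + c \leq G(\mu^{ac}(x))$ ($\Lb^m$-a.e.) and $a \cdot \sigma(x) \leq G^{\infty}(\sigma(x))$ ($|\mu^s|$-a.e.), multiplied by $b \geq 0$ and integrated, combine to give
\[
a \cdot \int_U b \dd \mu + c \int_U b \dd \Lb^m \;\leq\; \int_U b \dd G(\mu).
\]

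Setting $\alpha = \int_U b \dd \Lb^m > 0$ and dividing by $\alpha$, the left-hand side is precisely $\ell\!\left(\frac{\int_U b \dd \mu}{\alpha}\right)$. Since $G$ is convex and everywhere finite on the finite-dimensional space $\R^d$, it is continuous and coincides with the pointwise supremum of its affine minorants; taking the supremum in $\ell$ on the left then yields the claim.

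The main obstacle, modest as it is, is exactly the coupling step $\ell \leq G \Rightarrow \ell^{\infty} \leq G^{\infty}$: the absolutely continuous part of $G(\mu)$ is built from $G$ itself, whereas the singular part uses the recession function $G^{\infty}$, so a single affine minorant must simultaneously control both pieces of the Lebesgue decomposition. Once that link is established via the homogenization limit, no mollification or weak$^*$ approximation of $\mu$ is needed, and the remaining manipulations are routine consequences of the Lebesgue decomposition and the sign of $b$.
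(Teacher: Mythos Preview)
The paper does not supply its own proof of this lemma; it merely cites the result from Goffman--Serrin. Your argument is correct and is essentially the standard one (and the one used in the cited reference): write the convex function $G$ as the supremum of its affine minorants, observe that the linear part $a\cdot\xi$ of each minorant also minorizes the recession function $G^\infty$, integrate the resulting pointwise inequalities against $b\,\dd\Lb^m$ and $b\,\dd|\mu^s|$ respectively, and take the supremum over minorants.
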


\begin{lemma} \label{lem: G weaklystar}
Suppose that $w \in L^1(\Omega) \cap BV(U)$ for some open set $U$ compactly contained in $\Omega$ and let $w^\eps = w \ast \varrho_\eps$. Then, $G(w_x^\eps) \weaklystar G(w_x)$ in $M(U)$.
\end{lemma}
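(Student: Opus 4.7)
The plan is to sandwich $\int_U \zeta \dd G(w^\eps_x)$ between a pointwise Jensen-type upper bound and a Goffman--Serrin lower-semicontinuity bound, and to conclude that the two resulting one-sided inequalities force convergence.

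First I would fix a non-negative $\zeta \in C_c(U)$ and pick $\eps_0>0$ with the $\eps_0$-neighbourhood of $\supp\zeta$ compactly contained in $U$. For $\eps<\eps_0$, the standard integration-by-parts formula for $BV$ functions against a mollifier gives
\[w^\eps_x(x) = \int_U \varrho_\eps(x-y)\dd w_x(y) \qquad \text{for } x\in \supp\zeta.\]
Since $\varrho_\eps(x-\cdot)$ is non-negative and has total integral one, applying Lemma \ref{lem: jensen GS} pointwise with $b(y)=\varrho_\eps(x-y)$ would yield
\[G(w^\eps_x(x)) \leq \int_U \varrho_\eps(x-y)\dd G(w_x)(y) = (\varrho_\eps\ast G(w_x))(x).\]

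Multiplying by $\zeta$, integrating, and using Fubini to shift the convolution onto $\zeta$, I would obtain
\[\int_U \zeta \, G(w^\eps_x) \dd \Lb^1 \leq \int_U (\varrho_\eps\ast\zeta)\dd G(w_x),\]
and the uniform convergence $\varrho_\eps\ast\zeta \to \zeta$ on $U$ would then give $\limsup_{\eps\to 0}\int_U \zeta\dd G(w^\eps_x) \leq \int_U \zeta\dd G(w_x)$. For the matching lower bound, I would invoke the standard fact that $w^\eps_x \weaklystar w_x$ in $M(U)$ (which follows at once from $w^\eps \to w$ in $L^1_{loc}(U)$ together with an a priori total-variation bound) and feed it into Corollary \ref{cor-gs-lsc-zeta}. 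Together these establish convergence of $\int_U \zeta\dd G(w^\eps_x)$ to $\int_U \zeta\dd G(w_x)$ for non-negative $\zeta$. Decomposing a general $\zeta \in C_c(U)$ as $\zeta^+ - \zeta^-$ with both summands non-negative and compactly supported in $U$ would extend the convergence to arbitrary test functions, which is exactly the asserted weak* convergence.

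The main annoyance, rather than any conceptual obstacle, will be the domain bookkeeping: because $w$ is only assumed $BV$ on $U$, the convolution identity $w^\eps_x = \varrho_\eps\ast w_x$ holds only at points of distance at least $\eps$ from $\partial U$. This is handled cleanly by fixing $\supp\zeta$ first and only then sending $\eps \to 0$ through values small enough compared with $\dist(\supp\zeta,\partial U)$.
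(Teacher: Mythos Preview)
Your proof is correct and follows essentially the same approach that the paper has in mind: the paper's one-line proof refers to a modification of \cite[Theorem~2.2]{ambrosiofuscopallara} using Lemma~\ref{lem: jensen GS}, and your argument is precisely the natural unpacking of that hint---the pointwise Jensen bound from Lemma~\ref{lem: jensen GS} supplies the $\limsup$ inequality, while Goffman--Serrin lower semicontinuity (Corollary~\ref{cor-gs-lsc-zeta}) gives the matching $\liminf$. Your explicit treatment of the domain bookkeeping and the decomposition $\zeta = \zeta^+ - \zeta^-$ fills in details the paper leaves to the reader.
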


\begin{proof}
	The statement follows from a small modification of \cite[Theorem 2.2]{ambrosiofuscopallara}, which requires the use of Lemma \ref{lem: jensen GS}.
\end{proof}

The following theorem establishes coincidence of $G(w_x)(\Omega)$ with the relaxation of the integral functional $w \mapsto \int_\Omega G(w_x) \dd \Lb^m$, as mentioned in the Introduction. 
\begin{thm}\cite[Theorem 5]{GS64}
Let $G\colon \R^{m \times n} \to [0, \infty[$ be a~convex function of linear growth and $w \in BV(\Omega, \R^n)$. Then, 
	\begin{equation*}
	G(w_x)(\Omega)
	= \inf \left \{ \liminf_{\ell \to \infty} \int_\Omega G(w_x^\ell) \dd \Lb^m\colon w^\ell \in C^1(\Omega), \, w^\ell \to w \text{ in } L^1_{\mathrm{loc}}(\Omega) \right \}.
	\end{equation*}
\end{thm}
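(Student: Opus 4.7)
The plan is to prove the two directions of the identity separately. For the $\leq$ direction, fix any sequence $\{w^\ell\} \subset C^1(\Omega, \R^n)$ with $w^\ell \to w$ in $L^1_{loc}(\Omega, \R^n)$. Passing to a subsequence realizing the $\liminf$ on the right-hand side, the coercive half of the linear growth of $G$ forces $\sup_\ell |w_x^\ell|(\Omega) < \infty$ (using that $\Omega$ is bounded), so after extracting a further subsequence we obtain $w_x^\ell \Lb^m \weaklystar w_x$ in $M(\Omega, \R^{m \times n})$, the limit being identified via the $L^1_{loc}$ convergence of $w^\ell$. Theorem \ref{thm-gs-lsc} applied with $U = \Omega$ then gives
\[G(w_x)(\Omega) \leq \liminf_\ell G(w_x^\ell \Lb^m)(\Omega) = \liminf_\ell \int_\Omega G(w_x^\ell) \dd \Lb^m,\]
and taking the infimum over competitors yields this direction.

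For the $\geq$ direction, we build a recovery sequence via mollification. The core pointwise estimate comes from Jensen's inequality (Lemma \ref{lem: jensen GS}): applied with $b(y) = \varrho_\eps(x-y)$ at any $x$ with $\dist(x, \partial \Omega) > \eps$, it yields $G(w_x^\eps)(x) \leq (\varrho_\eps * G(w_x))(x)$, where $w_x^\eps := \varrho_\eps * w_x$. Integrating over $V \subset\subset \Omega$ and applying Fubini on the right-hand side produces $\int_V G(w_x^\eps) \dd \Lb^m \leq G(w_x)(\{y : \dist(y, V) \leq \eps\})$, so that $\limsup_{\eps \to 0} \int_V G(w_x^\eps) \dd \Lb^m \leq G(w_x)(\overline V)$. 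To assemble a global sequence on $\Omega$, employ a Meyers--Serrin type partition of unity: choose an exhaustion $\Omega_1 \subset\subset \Omega_2 \subset\subset \ldots$ of $\Omega$ and a locally finite $\{\psi_k\}$ with $\supp \psi_k \subset \Omega_{k+1} \setminus \overline{\Omega_{k-1}}$ and $\sum_k \psi_k \equiv 1$ on $\Omega$, then set $w^\delta = \sum_k \varrho_{\eps_k} * (\psi_k w)$ with $\eps_k$ small enough that each summand has support in $\Omega_{k+1} \setminus \overline{\Omega_{k-1}}$. Then $w^\delta \in C^\infty(\Omega, \R^n)$ and $w^\delta \to w$ in $L^1_{loc}(\Omega, \R^n)$ as $\max_k \eps_k \to 0$.

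The main obstacle is controlling $\int_\Omega G(w_x^\delta) \dd \Lb^m$. Leibniz' rule decomposes $w_x^\delta = \sum_k \varrho_{\eps_k} * (\psi_k w_x) + \sum_k \varrho_{\eps_k} * (w \otimes \nabla \psi_k)$. The first sum is handled via the Jensen estimate above: at each point only boundedly many terms contribute (by the finite-overlap structure of the partition), and summing the pointwise Jensen bounds then integrating gives a quantity that tends to $G(w_x)(\Omega)$ as $\eps_k \to 0$. The delicate issue is the error sum, which depends on $w$ and $\nabla \psi_k$ rather than $w_x$, hence is not directly controlled by $G(w_x)$. Using the Lipschitz estimate $G(\xi + \eta) \leq G(\xi) + L |\eta|$ (a consequence of convexity and linear growth of $G$), the error contributes at most a constant times $\sum_k \|\nabla \psi_k\|_\infty \|w\|_{L^1(\Omega_{k+1} \setminus \overline{\Omega_{k-1}})}$. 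By choosing the exhaustion so that these $L^1$ norms decay geometrically in $k$, this error is rendered arbitrarily small, closing the argument and yielding the reverse inequality.
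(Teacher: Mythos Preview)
The paper does not supply its own proof of this statement; it is quoted from \cite[Theorem~5]{GS64} without argument. Your lower-bound direction via Theorem~\ref{thm-gs-lsc} is correct.

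For the recovery sequence, however, your control of the error term $\sum_k \varrho_{\eps_k} * (w \otimes \nabla \psi_k)$ does not work. Bounding its $L^1$ norm by $\sum_k \|\nabla\psi_k\|_\infty \|w\|_{L^1(\Omega_{k+1}\setminus\overline{\Omega_{k-1}})}$ and then asserting this can be made small by choice of exhaustion is false: $\|\nabla\psi_k\|_\infty$ is at least comparable to the reciprocal of the layer width, so the product does not decay. Concretely, for $\Omega=(0,1)$ and $w\equiv 1$, any exhaustion $\Omega_k=(r_k,1-r_k)$ gives a $k$-th summand bounded below by a fixed positive constant, and the series diverges. The standard remedy is to use the cancellation $\sum_k \nabla\psi_k=0$: rewrite the error as $\sum_k\big(\varrho_{\eps_k}*(w\otimes\nabla\psi_k)-w\otimes\nabla\psi_k\big)$ and choose each $\eps_k$ small enough that the $k$-th difference has $L^1$ norm below $\delta\,2^{-k}$.

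There is a second, smaller gap in the main term. ``Summing the pointwise Jensen bounds'' presumes an inequality of the form $G(\sum_k a_k)\le\sum_k G(a_k)$, which fails for general convex $G$ of linear growth (take $G(\xi)=(|\xi|-1)_+$). A correct route is to apply Lemma~\ref{lem: jensen GS} once with the combined kernel $b(y)=\sum_k \varrho_{\eps_k}(x-y)\psi_k(y)$, observe that $\int b(x,y)\dd x\equiv 1$ while $\int b(x,y)\dd y$ can be made uniformly close to $1$ by shrinking the $\eps_k$, and absorb the resulting discrepancy using the Lipschitz bound on $G$. Alternatively, build an area-strictly convergent sequence and invoke Reshetnyak's continuity theorem.
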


Lastly, let us quote the following fact from measure theory.
\begin{lemma} \label{lem: mu leq nu}
	Let $\mu, \nu$ be positive Radon measures on an open set $U \subset \R^m$ which satisfy 
\[
	\int_U \zeta^2 \dd \mu \leq \int_U \zeta^2 \dd \nu
\]	
for any $\zeta \in C_c^1(U)$. Then, $\mu(E) \leq \nu(E) \text{ for any Borel set } E \subset U.$
\end{lemma}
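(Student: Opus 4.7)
The plan is to leverage the regularity of Radon measures together with a simple approximation of indicator functions by squares of smooth cutoffs. The argument proceeds in two stages: first I would establish $\mu(V) \leq \nu(V)$ for every open $V \subset U$; then I would pass to arbitrary Borel sets via outer regularity of $\nu$.

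For the open-set stage, I would first treat an open $V$ with $\overline{V} \subset U$ compact. Choose an exhaustion $K_1 \Subset K_2 \Subset \cdots \Subset V$ with $\bigcup_n K_n = V$, and pick $\zeta_n \in C_c^\infty(V) \subset C_c^1(U)$ with $0 \leq \zeta_n \leq \zeta_{n+1} \leq 1$ and $\zeta_n \equiv 1$ on $K_n$. Then $\zeta_n \nearrow \indi_V$ pointwise, hence also $\zeta_n^2 \nearrow \indi_V$. Since $\mu$ and $\nu$ are Radon, the quantities $\mu(\overline{V})$ and $\nu(\overline{V})$ are finite, so by monotone (or dominated) convergence $\int_U \zeta_n^2 \dd \mu \to \mu(V)$ and $\int_U \zeta_n^2 \dd \nu \to \nu(V)$. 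Applying the hypothesis to each $\zeta_n$ and passing to the limit gives $\mu(V) \leq \nu(V)$. For a general open $V \subset U$, I would exhaust $V$ by relatively compact open subsets $V_j \Subset V$ (possible since $U$, being an open subset of $\R^m$, is $\sigma$-compact) and pass to the limit using monotonicity of the measures.

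For the Borel stage, fix any Borel $E \subset U$. Outer regularity of the Radon measure $\nu$ on the $\sigma$-compact set $U$ yields $\nu(E) = \inf\{\nu(V)\colon E \subset V \subset U,\ V \text{ open}\}$. For each such open $V$, monotonicity of $\mu$ combined with the previous stage gives $\mu(E) \leq \mu(V) \leq \nu(V)$, and taking the infimum over $V$ delivers $\mu(E) \leq \nu(E)$, as claimed.

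The lemma is essentially a soft measure-theoretic fact, so I do not expect a genuine obstacle; the only step requiring mild care is arranging the monotone $C_c^1$ approximation of $\indi_V$ so that the squared sequence still approximates $\indi_V$ monotonically from below (needed to use monotone convergence and the finiteness of $\mu(\overline V), \nu(\overline V)$), which is handled by the constraint $0 \leq \zeta_n \leq 1$.
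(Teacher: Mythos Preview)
Your proof is correct and follows essentially the same route as the paper: approximate $\indi_V$ from below by squares of $C_c^1$ cutoffs to get $\mu(V)\le\nu(V)$ for open $V$, then pass to Borel sets via outer regularity. The paper's argument is marginally leaner---it treats all open $V\subset U$ at once and does not insist on monotone $\zeta_n$, using instead the chain $\mu(K_i)\le\int\zeta_i^2\,\mathrm d\mu\le\int\zeta_i^2\,\mathrm d\nu\le\nu(V)$ and continuity from below---but the substance is the same.
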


\begin{proof}
	Given any open set $V \subset U$, it is possible to find an increasing family $\{K_i\}_i$ of compact sets contained in $V$ whose union equals $V$. For each $i$ choose a~non-negative function $\zeta_i \in C_c^1(V)$ with $||\zeta_i||_\infty \leq 1$ which equals $1$ on $K_i$. Consequently, the assumption on $\mu$ and $\nu$ yields
\begin{equation*}
\mu(K_i) \leq \int_U \zeta_i^2 \dd \mu \leq \int_V \zeta_i^2 \dd \nu \leq  \nu(V),
\end{equation*}
which by continuity of measure $\mu$ means that $\mu(V) \leq \nu(V)$ for any open $V \subset U$. Now, if $E$ is an arbitrary Borel set, then
\begin{equation*}
	\mu(E) = \inf{ \left \{\mu(V)\colon V \text{ open, } E \subset V \right \}} \leq \inf{ \left\{ \nu(V):\, V \text{ open, } E \subset V \right \}} = \nu(E).
\end{equation*}
\end{proof}

\subsection*{Convex functions}

\begin{defn} \label{def: Reshetnyak}
Let $\psi\colon \R^n \to [0, \infty[$ be a~positively $1$-homogeneous function. It is convex if and only if for any two points $p, q \in \R^n$ it is true that
\begin{equation} \label{ineq: convexity}
	\psi(p + q) \leq \psi(p) + \psi(q).
\end{equation}
Following Reshetnyak's seminal paper \cite{resh}, we will say that a~positively $1$-homogeneous function $\psi$ is \emph{strictly convex in the sense of Reshetnyak} if and only if inequality \eqref{ineq: convexity} becomes an equality only if $p = 0$ or $q = tp$ for some $t \geq 0$.
\end{defn}

\begin{lemma} \label{lem: strict convexity}
Let $\psi\colon \R^n \to [0, \infty[$ be a positively $1$-homogeneous function which vanishes only at the origin and $g\colon\R \to \R$ be strictly convex and increasing. If $\psi$ is strictly convex in the sense of Reshetnyak, then $G\! := g \circ \psi$ is strictly convex, i.\,e., 
\[G(\lambda p + (1 - \lambda)q) < \lambda G(p) + (1 - \lambda)G(q) \quad \text{for any } \lambda \in ]0,1[ \text{ and distinct } p, q \in \R^n.\] 
If $\psi$ is not strictly convex in the sense of Reshetnyak, then $G$ is not strictly convex.
\end{lemma}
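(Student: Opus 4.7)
\medskip

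\noindent\textbf{Proof plan.} The plan is to analyze strict convexity of $G=g\circ\psi$ through the two-step chain
\[
G(\lambda p + (1-\lambda) q) \;=\; g\!\left(\psi(\lambda p + (1-\lambda) q)\right) \;\leq\; g\!\left(\lambda\psi(p) + (1-\lambda)\psi(q)\right) \;\leq\; \lambda\, G(p) + (1-\lambda)\, G(q),
\]
where the first inequality uses the triangle inequality \eqref{ineq: convexity} applied to $a = \lambda p$ and $b = (1-\lambda) q$ together with the monotonicity of $g$, and the second uses convexity of $g$. Since $g$ is strictly convex, the second inequality is strict unless $\psi(p)=\psi(q)$. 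So strict convexity of $G$ fails at a pair $(p,q)$ with $p\neq q$ and some $\lambda\in(0,1)$ if and only if \emph{both} $\psi(\lambda p+(1-\lambda)q)=\lambda\psi(p)+(1-\lambda)\psi(q)$ and $\psi(p)=\psi(q)$ hold. I would organize the proof around this dichotomy.

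For the forward direction I would assume that $\psi$ is strictly convex in the sense of Reshetnyak and show that the simultaneous equality above forces $p = q$. Equality in the first inequality is equality in \eqref{ineq: convexity} for $a=\lambda p$, $b=(1-\lambda)q$, so by Reshetnyak's condition either $\lambda p = 0$ or $(1-\lambda)q = t\lambda p$ for some $t \geq 0$. Since $\lambda,1-\lambda>0$, the first case gives $p=0$, and combined with $\psi(p)=\psi(q)$ and the assumption that $\psi$ vanishes only at the origin, we obtain $q=0=p$. In the second case $q = sp$ with $s\geq 0$ and (WLOG $p\neq 0$, else we fall back to the first case) $\psi(p)=\psi(q)=s\psi(p)$ forces $s=1$, hence $q=p$. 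Either contradicts $p\neq q$, so $G$ is strictly convex.

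For the converse I would pick $p,q$ witnessing the failure of Reshetnyak strict convexity: $\psi(p+q)=\psi(p)+\psi(q)$ with $p\neq 0$ and $q$ not a nonnegative multiple of $p$ (in particular $q\neq 0$). The main technical step, which I expect to be the principal obstacle, is the auxiliary claim that $\psi(ap+bq) = a\psi(p) + b\psi(q)$ for \emph{all} $a,b\geq 0$; I would prove this by sandwiching $\psi(ap+bq)$ between two applications of the triangle inequality, using identities like $bp+bq = (ap+bq) + (b-a)p$ (for $b\geq a$) to obtain the matching lower bound $a\psi(p)+b\psi(q)\leq \psi(ap+bq)$, then combining with the trivial upper bound. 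Once this is established, I rescale to equalize the values of $\psi$ by setting $p' = \psi(q)\, p$ and $q' = \psi(p)\, q$; then $\psi(p')=\psi(q')=\psi(p)\psi(q)$, $p'\neq q'$ (since $q$ is not a nonnegative multiple of $p$), and the claim gives $\psi\bigl(\tfrac{1}{2}p'+\tfrac{1}{2}q'\bigr) = \psi(p)\psi(q)$. Substituting into $G$ yields $G\bigl(\tfrac{1}{2}p'+\tfrac{1}{2}q'\bigr) = g(\psi(p)\psi(q)) = \tfrac{1}{2}G(p')+\tfrac{1}{2}G(q')$, so strict convexity of $G$ fails at $(p',q',\tfrac{1}{2})$.
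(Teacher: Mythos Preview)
Your proof is correct and follows essentially the same approach as the paper. The forward direction is organized identically: analyze when both inequalities in the chain $g(\psi(\lambda p+(1-\lambda)q))\leq g(\lambda\psi(p)+(1-\lambda)\psi(q))\leq \lambda G(p)+(1-\lambda)G(q)$ can be equalities and show that Reshetnyak strict convexity forces $p=q$. For the converse, both you and the paper rescale to produce points $p',q'$ with $\psi(p')=\psi(q')$ on which $\psi$ is affine; the paper does this by first noting that the midpoint identity $\psi(\tfrac{p+q}{2})=\tfrac12\psi(p)+\tfrac12\psi(q)$ forces $\psi$ affine on the segment $[p,q]$ and then rescaling $q$, whereas you prove the slightly stronger auxiliary statement $\psi(ap+bq)=a\psi(p)+b\psi(q)$ for all $a,b\geq 0$ via the sandwich argument. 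This is a cosmetic difference only.
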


\begin{proof}
Choose any distinct $p, q \in \R^n$ and $0 < \lambda < 1$ and assume strict convexity (in the sense of Reshetnyak) of $\psi$. If $p = 0$, then the statement holds since $g$ is strictly convex and $\psi$ vanishes at the origin (and only there). If $q = tp$ for $t > 0$ and $t \neq 1$, then $\psi(q) = t\psi(p) \neq \psi(p)$ and again it suffices to use strict convexity of $g$. At last, for points for which inequality \eqref{ineq: convexity} is strict, one can write
\begin{align*}
	G\left(\lambda p + (1-\lambda)q \right) &< g \left( \lambda \psi(p) + (1- \lambda) \psi(q) \right) \\
	& \leq \lambda g(\psi(p)) + (1-\lambda)g(\psi(q)).
\end{align*}
The first inequality is strict as $g$ under our assumptions is strictly increasing whereas the second is not as it might happen that $\psi(p) = \psi(q)$.

Suppose now that $\psi$ is not strictly convex (in the sense of Reshetnyak) so that there exist non-zero points $p, q$ such that $p \neq tq$ for any $t > 0$ for which inequality \eqref{ineq: convexity} yields equality. The same is true for $p/2$ and $q/2$, which implies that $\psi$ has to be affine on the segment $[p, q]$. As a~result, for $\lambda = (1 + \psi(p)/\psi(q))^{-1}$ it is true that
\begin{equation*}
	\psi \left( p + \frac{\psi(p)}{\psi(q)}q \right) = \lambda^{-1} \psi( \lambda p + (1 - \lambda)q) = \psi(p) + \psi \left( \frac{\psi(p)}{\psi(q)}q \right).
\end{equation*}
Therefore, we have found points $\tilde{p} = p$, $\tilde{q} = (\psi(p)/\psi(q)) q$ such that $\tilde{p} \neq t\tilde{q}$, whose $\psi$-norms coincide and for which inequality \eqref{ineq: convexity} turns into equality. Consequently, for any $\lambda \in (0,1)$
\begin{equation*}
	G\left(\lambda \tilde{p} + (1-\lambda)\tilde{q} \right)  = g \left( \lambda \psi(\tilde{p}) + (1- \lambda) \psi(\tilde{q}) \right) = g(\psi(\tilde{p})) = \lambda G(\tilde{p}) + (1- \lambda) G(\tilde{q}),
\end{equation*}
what finishes the proof.
\end{proof}

\begin{remark} \label{remark: f(t)/t}
	Given a~convex and differentiable function $f\colon [0, \infty[ \to [0, \infty[$ of linear growth (with constant $C_f^+$) it is true that
	\begin{equation*}
		f^\infty\! := \lim_{t \to \infty} \frac{f(t)}{t} = \lim_{t \to \infty} f'(t) \leq C_f^+.
	\end{equation*}
Observe that in this case, the recession function defined in \eqref{Fmu} equals $f^\infty(s) = f^\infty \, s$. Moreover, convolution of such $f$ with a~standard mollifier yields a~function which is of linear growth with the same constant. This inequality also shows that such $f$ is Lipschitz, so mollifications of $f$ converge uniformly to $f$ (and not only locally). For any convex $\psi\colon \R^n \to \R$, its mollification $\psi_\eta$ with a~radially symmetric mollifier gives $\psi_\eta \geq \psi$.
\end{remark}

Given $f\colon [0, \infty[ \to [0, \infty[$, set
\begin{equation} \label{eq: f eta}
	f_\eta(t) = \left( f(t) \indi_{\{t>\eta\}} + f(\eta)\indi_{\{-\eta \leq t \leq \eta\}} \right) \ast \varrho_{\frac{\eta}{2}}.
\end{equation}
To construct a~suitable sequence approximating a~given anisotropy $\varphi$ we use a~slightly modified mollification procedure which retains homogeneity and which has been described by Schneider in his monograph on convex bodies \cite{schneider}.

\begin{lemma} \label{lem: schneider} \cite[Theorem 3.3.1]{schneider}
	Take $\varrho(p)\! := \rho(|p|)$ with $\rho\colon [0, \infty[ \to [0, \infty[$ being a~smooth function with support contained in $[1/2,1]$ such that $\int_{\R^n} \rho(|z|) \dd z = 1$. For any anisotropy $\varphi$, the function
	\begin{equation}\label{goodnorm_eq}
		\overline{\varphi}_\eta(p)\! := \int_{\R^n} \varphi(p + |p|z) \varrho_\eta(z) \dd z
	\end{equation}
	is an anisotropy of class $C^{\infty}(\R^n \setminus \{0\})$.
\end{lemma}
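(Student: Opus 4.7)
The plan is to first reduce the definition of $\overline{\varphi}_\eta$ to a standard mollification by exploiting the $1$-homogeneity of $\varphi$. For $p \in \R^n \setminus \{0\}$, writing $p + |p|z = |p|(\hat p + z)$ with $\hat p = p/|p|$, one obtains
\[
    \overline{\varphi}_\eta(p) = |p| \int_{\R^n} \varphi(\hat p + z)\,\varrho_\eta(z) \dd z = |p|\, \widetilde{\varphi}_\eta(\hat p),
\]
where $\widetilde{\varphi}_\eta := \varphi \ast \varrho_\eta$ (the last equality uses $\varrho_\eta(-z) = \varrho_\eta(z)$). I set $\overline{\varphi}_\eta(0) := 0 = \varphi(0)$, which agrees with the original integral.

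From this identity the ``easy'' properties follow at once. Positive $1$-homogeneity is immediate since $\hat p$ is invariant under positive scalings and $|tp| = t|p|$. Smoothness on $\R^n \setminus \{0\}$ follows because $p \mapsto (|p|, \hat p)$ is $C^\infty$ there, while $\widetilde{\varphi}_\eta$ is $C^\infty(\R^n)$ as the convolution of a continuous function with a compactly supported smooth kernel. Non-negativity is clear.

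The remaining task is convexity, and together with positive $1$-homogeneity it suffices to prove subadditivity $\overline{\varphi}_\eta(p+q) \leq \overline{\varphi}_\eta(p) + \overline{\varphi}_\eta(q)$. The cases $p = 0$, $q = 0$, or $p + q = 0$ are trivial, so assume $p, q, p+q$ are all nonzero and set $\gamma := |p+q|/(|p|+|q|) \in (0, 1]$. The key decomposition is
\[
    p + q + |p+q|\, z \;=\; \bigl(p + |p|\,\gamma z\bigr) + \bigl(q + |q|\,\gamma z\bigr),
\]
to which I would apply subadditivity of the anisotropy $\varphi$ and then integrate against $\varrho_\eta$. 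Using the homogeneity reduction again, the resulting bound becomes
\[
    \overline{\varphi}_\eta(p+q) \leq |p|\,\psi_p(\gamma) + |q|\,\psi_q(\gamma), \qquad \psi_p(s) := \int_{\R^n} \varphi(\hat p + s z)\,\varrho_\eta(z) \dd z,
\]
and analogously for $\psi_q$. Since $\psi_p(1) = \widetilde{\varphi}_\eta(\hat p)$, it remains to show $\psi_p(\gamma) \leq \psi_p(1)$ for $\gamma \in [0,1]$. But $\psi_p$ is convex (average of convex maps $s \mapsto \varphi(\hat p + s z)$) and even in $s$ (by radial symmetry of $\varrho$), hence non-decreasing on $[0, \infty[$; the same argument handles $\psi_q$.

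I expect the subadditivity step to be the main obstacle: a direct mollification $\widetilde{\varphi}_\eta$ is convex and smooth but not $1$-homogeneous, so one cannot simply replace $\overline{\varphi}_\eta$ by $\widetilde{\varphi}_\eta$. The introduction of the scaling factor $\gamma$ in the decomposition, combined with the convexity-and-evenness of $\psi_p$ and $\psi_q$ along rays, is what makes the scheme close.
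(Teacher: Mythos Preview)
Your argument is correct. The paper does not give its own proof of this lemma; it simply cites \cite[Theorem 3.3.1]{schneider}, so there is nothing to compare against here beyond confirming that your reasoning is sound.

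A few brief remarks confirming the key steps: the reduction $\overline{\varphi}_\eta(p) = |p|\,(\varphi * \varrho_\eta)(\hat p)$ is valid by positive $1$-homogeneity of $\varphi$ and radial symmetry of $\varrho_\eta$, and this immediately yields smoothness away from the origin and $1$-homogeneity. For subadditivity, your decomposition with $\gamma = |p+q|/(|p|+|q|)$ is exactly right, and the crucial inequality $\psi_p(\gamma) \leq \psi_p(1)$ follows cleanly from the observation that $\psi_p$ is convex (as an average of convex functions of $s$) and even (by the change of variables $z \mapsto -z$ and radial symmetry of $\varrho_\eta$), hence non-decreasing on $[0,\infty[$. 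This last point is where the radial symmetry of the mollifier is genuinely used, and you have identified it correctly. The proof is complete as written.
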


\begin{prop}\label{goodnorm}
	Let $\varphi$ be an anisotropy on $\R^n$. There exists a~sequence of anisotropies $\{ \ourphi \}_\eta$ with the following properties:
	\begin{enumerate}[(i)]
		\item $\ourphi \in C^\infty(\R^n \setminus \{0\})$ and $D^2 \left( \frac{1}{2} \ourphi^{\,2} \right) \geq \eta \Id$ in $\R^n \setminus \{0\}$;
		\item $\ourphi \geq \varphi$ and $\ourphi$ converge to $\varphi$ locally uniformly on $\R^n$.
	\end{enumerate}
	Assume additionally that $\varphi \in C^{1}(\R^n \setminus \{0\})$. Then,
	\begin{enumerate}[(i)]
		\setcounter{enumi}{2}
		\item $D \ourphi$ converges to $D \varphi$ uniformly on $\R^n \setminus \{0\}$.
    \end{enumerate}
    If $f\colon [0, \infty[ \to [0, \infty[$ is strictly convex, increasing and differentiable with $f'(0) = 0$, then
    \begin{enumerate}[(i)]
        \setcounter{enumi}{3}
		\item $D(f_\eta \circ \ourphi)$ converges to $D(f \circ \varphi)$ uniformly on $\R^n$, where $f_\eta$ is defined in \eqref{eq: f eta}.
	\end{enumerate}	
\end{prop}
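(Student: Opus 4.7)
The plan is to couple Schneider's mollification (Lemma \ref{lem: schneider}) with a small Euclidean perturbation, defining
\[
	\ourphi(p) := \bigl(\overline{\varphi}_\eta(p)^2 + \eta|p|^2\bigr)^{1/2}.
\]
Positive $1$-homogeneity is clear; convexity follows from Minkowski's inequality applied to the $\R^2$-valued map $p \mapsto (\overline{\varphi}_\eta(p), \sqrt{\eta}|p|)$, and smoothness on $\R^n \setminus\{0\}$ is inherited from $\overline{\varphi}_\eta$ together with strict positivity of the radicand. For (i), one decomposes $\tfrac{1}{2}\ourphi^{\,2} = \tfrac{1}{2}\overline{\varphi}_\eta^2 + \tfrac{\eta}{2}|p|^2$: the first summand has positive semidefinite Hessian, as the square of a nonnegative convex function, while the second contributes exactly $\eta\Id$. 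For (ii), Jensen's inequality and radial symmetry of $\varrho$ give $\overline{\varphi}_\eta \geq \varphi$, whence $\ourphi \geq \overline{\varphi}_\eta \geq \varphi$; the algebraic identity $\ourphi^2 - \overline{\varphi}_\eta^2 = \eta|p|^2$ combined with $\ourphi \geq \sqrt{\eta}|p|$ yields $\ourphi - \overline{\varphi}_\eta \leq \sqrt{\eta}|p|$, which together with locally uniform convergence $\overline{\varphi}_\eta \to \varphi$ (from Lipschitz continuity of $\varphi$ on compacts) proves (ii).

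For (iii) I would differentiate under the integral in \eqref{goodnorm_eq} to obtain
\[
	D\overline{\varphi}_\eta(p) = \int_{\R^n}\Bigl[D\varphi(p + |p|z) + \tfrac{p}{|p|}\langle z, D\varphi(p + |p|z)\rangle\Bigr]\varrho_\eta(z)\dd z,
\]
and reduce uniform convergence $D\overline{\varphi}_\eta \to D\varphi$ on $\R^n \setminus\{0\}$ to that on the unit sphere by invoking $0$-homogeneity and uniform continuity of $D\varphi$ on a closed annulus. From the chain rule $D\ourphi = (\overline{\varphi}_\eta D\overline{\varphi}_\eta + \eta p)/\ourphi$ I then split
\[
	D\ourphi - D\varphi = \frac{\overline{\varphi}_\eta(D\overline{\varphi}_\eta - D\varphi)}{\ourphi} + \frac{(\overline{\varphi}_\eta - \ourphi) D\varphi}{\ourphi} + \frac{\eta p}{\ourphi};
\]
the first piece is bounded by $|D\overline{\varphi}_\eta - D\varphi|$ since $\overline{\varphi}_\eta \leq \ourphi$, while the other two are controlled by combining $|\overline{\varphi}_\eta - \ourphi| \leq \sqrt{\eta}|p|$ with the lower bound $\ourphi \geq c_\varphi^- |p|$ coming from coercivity of $\varphi$.

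For (iv) I would use the product-rule decomposition
\[
	D(f_\eta \circ \ourphi) - D(f \circ \varphi) = \bigl[f_\eta'(\ourphi) - f'(\varphi)\bigr] D\ourphi + f'(\varphi)\bigl[D\ourphi - D\varphi\bigr].
\]
The second summand vanishes uniformly by (iii) and the bound $f' \leq f^\infty$. The first is split further into $f_\eta'(\ourphi) - f'(\ourphi)$, small by uniform convergence $f_\eta' \to f'$ on $[0, \infty[$ (where $f'(0) = 0$ ensures that the flattening near $0$ in \eqref{eq: f eta} introduces no jump), and $f'(\ourphi) - f'(\varphi)$, handled by uniform continuity of $f'$ on $[0, \infty[$: the estimate $\ourphi - \varphi \leq \sqrt{\eta}|p|$ delivers smallness on any bounded region of $p$, while on the complement both arguments exceed any prescribed threshold and $f'$ is close to its limit $f^\infty$. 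The main obstacle I anticipate is extracting genuine uniformity rather than mere locally uniform convergence in (iii) and (iv); the delicate terms are those in which $D\varphi/\ourphi$ appears, where the denominator can shrink, and it is here that coercivity of $\varphi$ (for (iii)) and the condition $f'(0) = 0$ (for (iv)) are indispensable.
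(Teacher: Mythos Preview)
Your construction and argument track the paper's proof almost verbatim: the same definition $\ourphi = (\overline{\varphi}_\eta^2 + \eta|\cdot|^2)^{1/2}$, the same reduction of (iii) to the unit sphere via $0$-homogeneity of $D\ourphi$, and an equivalent product-rule decomposition for (iv) (the paper adds and subtracts $f'_\eta(\ourphi)D\varphi$ where you add and subtract $f'(\varphi)D\ourphi$, which is immaterial). Your explicit appeal to coercivity of $\varphi$ in (iii) is not among the hypotheses of the proposition, but the paper's proof tacitly relies on it too (it treats $\{p:\varphi(p)\leq M\}$ as compact in the proof of (iv)), and in every application within the paper $\varphi$ is assumed coercive; alternatively, for the third term $\eta p/\ourphi$ the bound $\ourphi\geq\sqrt{\eta}\,|p|$ already suffices.
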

Anisotropies which satisfy condition (i) are often referred to as anisotropies of class $C^\infty_+$ (see \cite{schneider}, \cite{BellettiniCasellesNovaga2005}).

\begin{proof}
	We define
	\[\ourphi(p) = \sqrt{\overline{\varphi}_\eta^2(p) + \eta |p|^2}, \] 
	where $\overline{\varphi}_\eta$ is given by \eqref{goodnorm_eq}. Condition (i) is clearly satisfied whereas $\ourphi \geq \varphi$ follows from the fact that $\overline{\varphi}_\eta \geq \varphi$. Indeed, due to homogeneity, it suffices to check it only for $p$ from the unit sphere, for which \eqref{goodnorm_eq} amounts to the ordinary convolution for which this property can be easily shown to be true. Local uniform convergence of $\ourphi$ to $\varphi$ holds since $\overline{\varphi}_\eta$ converges to $\varphi$ locally uniformly as well.
	
	The rest of the proof is devoted to the regular case. The key to uniform convergence of derivatives is the fact that the derivative of a~positively $1$-homogeneous function is $0$-homogeneous, hence it is enough to check it over the unit sphere. Passing with the differentiation under the integral sign gives for $|p| = 1$
	\begin{equation*}
		D \overline{\varphi}_\eta (p) = \int_{\R^n} D\varphi(p + z) \left( \mathrm{Id} + z \cdot p \otimes p \right) \varrho_\eta(|z|) \dd z.
	\end{equation*}
	Uniform continuity of $D\varphi$ on compact sets implies that $D \overline{\varphi}_\eta$ converges uniformly to $D\varphi$. Then, simple calculations lead to the conclusion that property (iii) is satisfied.
	
	The proof of (iv) starts with showing that $f' \circ \ourphi$ converges to $f'\ \circ \varphi$ uniformly on $\R^n$. Recall that for any $\delta$ there exists an $M$ such that if only $t \geq M$, then $f^\infty - f'(t) \leq \delta$. Fix any $\delta > 0$ and choose $M$ accordingly. On the compact ball $\{p\colon \varphi(p) \leq M\}$ functions $\ourphi$ converge uniformly to $\varphi$, i.\,e., there exists $\eta_0$ such that for all $\eta \leq \eta_0$ and $p$ with $\varphi(p) \leq M$ we have
	\[
	|\ourphi(p) - \varphi(p)| \leq \delta.
	\]
	Whenever $\varphi(p) \geq M$, then by (ii) $\ourphi(p) \geq M$ and as a~result, after adding and subtracting $f^\infty$, we get
	\begin{equation*}
		| f'(\ourphi(p)) - f'(\varphi(p)) | \leq 2\delta,
	\end{equation*}
	which shows the desired claim.
	
	Clearly, $D(f_\eta \circ \ourphi)(0) = D(f \circ \varphi)(0) = 0$. Take any $p \neq 0$ and observe that
	\begin{equation*}
		|D(f_\eta \circ \ourphi)(p) - D(f \circ \varphi)(p)| \leq f^\infty |D\ourphi(p) - D\varphi(p)| + |f'_\eta(\ourphi(p)) - f'(\varphi(p))||D\varphi(p)|.
	\end{equation*} 
	By (iii), it is evident that the first term converges uniformly to zero. Moreover, $|D\varphi|$ is bounded. Adding and subtracting $f'(\ourphi(p))$ in the second term together with the claim established in the previous paragraph and the fact that $f'_\eta$ converge uniformly to $f'$ allows us to conclude that the second term converges uniformly to zero as well.
\end{proof}

\begin{lemma} \label{lem: smooth phi sqr}
	There exists a convex function $\widetilde{\widetilde{\varphi}}_\eta \in C^2(\R^n)$, which equals $\ourphi^2$ outside $B_\eta(0)$.
\end{lemma}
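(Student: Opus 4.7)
The plan is to modify $\ourphi^2$ near the origin by composing it with a suitable convex, nondecreasing scalar function that is constant for small arguments and the identity for large ones. The key observation, from the explicit formula $\ourphi(p) = \sqrt{\overline{\varphi}_\eta^2(p) + \eta|p|^2}$ established in the proof of Proposition~\ref{goodnorm}, is that $\ourphi^2(p) \geq \eta|p|^2$. In particular, $\ourphi^2(p) \geq \eta^3$ whenever $|p| \geq \eta$, which creates a clean separation between the values of $\ourphi^2$ inside and outside $B_\eta(0)$ and enables a cutoff at the level of $\ourphi^2$ rather than at the level of $p$.

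First, I would construct a function $\zeta \colon [0,\infty) \to [0,\infty)$ of class $C^\infty$ that is convex, nondecreasing, constant in a neighborhood of $0$, and satisfies $\zeta(t) = t$ for every $t \geq \eta^3$. Such $\zeta$ is obtained by integrating a smooth nondecreasing function $\zeta'$ that vanishes near $0$ and equals $1$ on $[\eta^3,\infty)$, with the constant of integration chosen so that $\zeta$ matches the identity on $[\eta^3,\infty)$.

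Next, set $\widetilde{\widetilde{\varphi}}_\eta(p) := \zeta(\ourphi^2(p))$. Convexity follows from the standard fact that a nondecreasing convex function composed with a convex function is convex, together with the convexity of $\ourphi^2$ (itself the square of a nonnegative convex function). For $|p| \geq \eta$, the lower bound $\ourphi^2(p) \geq \eta^3$ combined with $\zeta|_{[\eta^3,\infty)} = \mathrm{id}$ yields $\widetilde{\widetilde{\varphi}}_\eta(p) = \ourphi^2(p)$.

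The only delicate point is $C^2$ regularity at the origin, where $\ourphi^2$ itself is only continuous. On $\R^n \setminus \{0\}$ the chain rule gives $\widetilde{\widetilde{\varphi}}_\eta \in C^\infty$ since both $\zeta$ and $\ourphi^2$ are smooth there by Proposition~\ref{goodnorm}(i). Near $0$, continuity of $\ourphi^2$ with $\ourphi^2(0) = 0$ furnishes an open neighborhood of $0$ on which $\ourphi^2$ lies below the threshold where $\zeta$ is constant; on this neighborhood $\widetilde{\widetilde{\varphi}}_\eta$ is itself constant, and hence $C^\infty$. The main obstacle one might anticipate---that $D^2(\ourphi^2)$ may not have a limit at $0$, since the homogeneous function $\ourphi^2$ typically fails to be $C^2$ at the origin---is bypassed precisely because $\zeta\circ\ourphi^2$ is genuinely flat near $0$ rather than an attempted smooth interpolation of an ill-behaved limiting Hessian.
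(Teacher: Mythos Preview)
Your proof is correct and takes a genuinely different route from the paper's. The paper mollifies $\ourphi^{\,2}$ near the origin and then glues the mollification to $\ourphi^{\,2}$ along an annulus, invoking the result of Ghomi to guarantee that the interpolated function remains convex; this uses the strict positivity $D^2\ourphi^{\,2}\geq 2\eta\,\Id$ on the annulus. Your argument instead exploits the explicit lower bound $\ourphi^{\,2}(p)\geq\eta|p|^2$ coming from the construction in Proposition~\ref{goodnorm}, which separates the values of $\ourphi^{\,2}$ inside and outside $B_\eta(0)$ and allows a cutoff at the level of the \emph{values} of $\ourphi^{\,2}$ via composition with a scalar $\zeta$. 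Convexity then comes for free from the standard rule ``nondecreasing convex composed with convex is convex'' (and $\ourphi^{\,2}=\overline{\varphi}_\eta^{\,2}+\eta|\cdot|^2$ is visibly convex), while $C^\infty$ regularity at the origin is automatic because your function is constant there. Your approach is more elementary and entirely self-contained, at the cost of being tailored to this particular $\ourphi$; the paper's gluing argument is more general in that it would apply to any $C^2$ convex function with uniformly positive Hessian on the relevant annulus, even without a clean level-set separation, but it relies on an external reference. One minor difference worth noting: your $\widetilde{\widetilde{\varphi}}_\eta$ is flat near the origin, so it does not inherit a uniform Hessian lower bound there; this is irrelevant for the lemma as stated, which only asks for convexity.
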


\begin{proof}
	To construct such a~function, mollify $\ourphi^2$ with $\varrho_{\eps}$ for a~sufficiently small $\eps$ and glue it to $\ourphi^2$ along a~small annulus in $B_\eta(0)$. Convexity is achieved thanks to the fact that $D^2\ourphi^2 \geq \eta \Id$ on the boundary of the ball $B_\eta(0)$ and the appropriate choice of $\eps$, as in \cite[Section 2]{ghomi}.
\end{proof}

\subsection*{Spherical compactification}
\begin{defn}
Let $\overline{\R^n} := \R^n \cup \mathbb{S}^{n-1}_\infty$, where $\mathbb{S}^{n-1}_\infty\! := \{ \infty \, \omega\colon \omega \in \mathbb{S}^{n-1} \}$. We call $\overline{\R^n}$ the spherical compactification of $\R^n$. Furthermore, define $\Phi\colon \overline{\R^n} \to \overline{\mathbb{B}^n}$ with the formula
\begin{equation*}
	\Phi(p) = \begin{cases}
		\frac{p}{1 + |p|} & \text{if } p \in \R^n, \\
		\omega & \text{if } p = \infty \omega \in \mathbb{S}^{n-1}_\infty.
	\end{cases}
\end{equation*}
The space $\overline{\R^n}$ can be equipped with the metric
\begin{equation*}
	\overline{d}(p, q)\! := \lvert \Phi(p) - \Phi(q) \rvert.
\end{equation*}
\end{defn}

The space $\overline{\R^n}$ with topology induced by $\overline{d}$ is indeed compact since $\Phi$ is a~homeomorphism between this space and the closed unit ball with the Euclidean subspace topology. Clearly, any point $p$ in $\R^n$ can be uniquely represented as $q = r \omega$ with $\omega \in \mathbb{S}^{n-1}_\infty$. If one looks at a~sequence of points $p_j = r_j \omega_j$, it is evident that it converges to a~point $p = \infty \omega \in \mathbb{S}^{n-1}_\infty$ if and only if $r_j \to +\infty$ and $\omega_j \to \omega$. On the other hand, if a~sequence of points $p_j$ converges in $\overline{d}$ to a~point $p \in \R^n$, then for all but a~finite number of $j$, $p_j$ lies in $\R^n$ as well. Moreover, uniform continuity of $\Phi^{-1}$ implies that in such case $|p_j - p| \to 0$. Additionally, since $\Phi$ is $1$-Lipschitz, a~sequence of points $p_j \in \R^n$ which converges in Euclidean distance to $p \in \R^n$, is also convergent to $p$ in $\overline{\R^n}$ with metric $\overline{d}$.

\begin{lemma} \label{lem: injective on spheres}
Let $\varphi\colon \R^n \to [0, \infty[$ be a~$C^1(\R^n \setminus \{0\})$ strictly convex (in the sense of Reshetnyak) anisotropy. Then for any $r>0$ the mapping $D\varphi$ restricted to $\{ p\in \R^n\colon \varphi(p) = r\}$ is injective.
\end{lemma}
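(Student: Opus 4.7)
The plan is to use Euler's identity for positively $1$-homogeneous functions together with the Reshetnyak strict convexity to reduce the problem to the equality case of the triangle inequality. I fix $r>0$ and two points $p,q$ on the level set $\{\varphi=r\}$ with $D\varphi(p)=D\varphi(q)$, and my goal is to show $p=q$.

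First, I would recall that the $C^1$-regularity of $\varphi$ outside the origin together with $1$-homogeneity yields Euler's relation $D\varphi(\xi)\cdot\xi=\varphi(\xi)$ for every $\xi\neq 0$ (obtained by differentiating $\varphi(t\xi)=t\varphi(\xi)$ at $t=1$). Applying this at $p$ and at $q$, and using $D\varphi(p)=D\varphi(q)$, I get
\[
D\varphi(q)\cdot p \;=\; D\varphi(p)\cdot p \;=\; \varphi(p) \;=\; r \;=\; \varphi(q).
\]
The gradient inequality for the convex function $\varphi$ at $q$ then reads
\[
\varphi(p) \;\geq\; \varphi(q) + D\varphi(q)\cdot(p-q) \;=\; r + r - r \;=\; r,
\]
and since $\varphi(p)=r$, equality holds. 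A standard convexity argument (the one-variable function $t\mapsto\varphi((1-t)q+tp)$ is convex and satisfies $h(1)-h(0)=h'(0)$, so it is affine on $[0,1]$) shows that $\varphi$ is affine along the segment $[q,p]$; in particular $\varphi(\tfrac{p+q}{2})=r$.

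Using $1$-homogeneity once more, this rewrites as
\[
\varphi(p+q) \;=\; 2\,\varphi\!\left(\tfrac{p+q}{2}\right) \;=\; 2r \;=\; \varphi(p)+\varphi(q),
\]
i.e.\ equality in the subadditivity inequality \eqref{ineq: convexity}. Since $\varphi$ is strictly convex in the sense of Reshetnyak (Definition \ref{def: Reshetnyak}) and $p\neq 0$, this forces $q=tp$ for some $t\geq 0$. Combining $\varphi(q)=t\varphi(p)$ with $\varphi(p)=\varphi(q)=r>0$ yields $t=1$, so $p=q$.

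I do not anticipate a serious obstacle here: the Reshetnyak hypothesis is designed precisely to rule out the degeneracy where $\varphi$ is affine on a two-dimensional cone, which is exactly the scenario in which $D\varphi$ could send two non-proportional points on $\{\varphi=r\}$ to the same value. The only mild technicality is justifying the affineness on the segment $[q,p]$, which is a one-line consequence of convexity of $t\mapsto\varphi((1-t)q+tp)$ together with the computed derivative matching the secant.
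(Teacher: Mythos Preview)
Your argument is correct. You use Euler's identity $D\varphi(\xi)\cdot\xi=\varphi(\xi)$ to show that equality in the gradient inequality holds, hence $\varphi$ is constant along $[q,p]$, and then invoke the equality case of subadditivity directly via Definition~\ref{def: Reshetnyak}. All steps are sound; in particular the segment cannot pass through the origin since $h(t)\geq r>0$, so the differentiability needed for $h'(0)$ is unproblematic.

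The paper's proof takes a different route: it observes that $\varphi^2$ is $C^1$ on all of $\R^n$ and, by Lemma~\ref{lem: strict convexity} (applied with $g(t)=t^2$), strictly convex in the ordinary sense; hence $D(\varphi^2)=2\varphi\,D\varphi$ is strictly monotone and therefore injective. Restricting to $\{\varphi=r\}$ immediately gives injectivity of $2r\,D\varphi$ and thus of $D\varphi$. The paper's argument is shorter because it offloads the work to the already-established Lemma~\ref{lem: strict convexity}, whereas your proof is self-contained and appeals to the Reshetnyak condition directly, which makes the role of that hypothesis more transparent.
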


\begin{proof}
The function $\varphi^2$ is $C^1$ on the whole $\R^n$ and is strictly convex as shown in Lemma \ref{lem: strict convexity}. Therefore, $D\varphi^2$ is strictly monotone and hence injective. For any $p \neq q$ such that $\varphi(p) = \varphi(q) = r$ for some positive $r > 0$ it is then true that
\[
	2 r D\varphi(p) \neq 2 r D\varphi(q),
\]
which means that $D\varphi(p) \neq D \varphi(q)$.
\end{proof}

\begin{lemma} \label{lem: DF homeo}
Let $F = f \circ \varphi$ be as in Theorem \ref{thm:regular norm}. The mapping $\overline{DF}\colon \overline{\R^n} \to \R^n$ defined as
\begin{equation*}
	\overline{DF} = \begin{cases}
		DF(p) &  \text{if } p \in \R^n,  \\
		f^\infty D\varphi(\omega) & \text{if } p = \infty \omega \in \mathbb{S}^{n-1}_\infty
	\end{cases}
\end{equation*}
is a homeomorphism onto its image.
\end{lemma}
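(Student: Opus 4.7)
The plan is to show that $\overline{DF}$ is continuous and injective; since $\overline{\R^n}$ is compact (being homeomorphic to $\overline{\mathbb{B}^n}$ via $\Phi$) and $\R^n$ is Hausdorff, a continuous injection between such spaces is automatically a homeomorphism onto its image, which gives the claim.

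For continuity, the nontrivial cases are at the origin and on $\mathbb{S}^{n-1}_\infty$. If $p_j \to 0$ in $\overline d$, then $p_j \in \R^n$ eventually with $|p_j| \to 0$; since $f'(\varphi(p_j)) \to f'(0) = 0$ and $|D\varphi|$ is bounded on $\R^n \setminus \{0\}$ by $0$-homogeneity, we get $DF(p_j) \to 0 = \overline{DF}(0)$. At $\infty\omega$, for a sequence in $\R^n$ converging to $\infty\omega$, coercivity of $\varphi$ forces $\varphi(p_j) \to \infty$, so $f'(\varphi(p_j)) \to f^\infty$, while $D\varphi(p_j) = D\varphi(p_j/|p_j|) \to D\varphi(\omega)$ by $0$-homogeneity and continuity of $D\varphi$ on $\R^n \setminus \{0\}$; for a sequence on $\mathbb{S}^{n-1}_\infty$ the conclusion follows directly from continuity of $D\varphi$ on the unit sphere, and a general sequence is handled by splitting.

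For injectivity, first note $f^\infty > 0$ (as $f$ is strictly convex with $f'(0)=0$, hence $f^\infty \geq f'(1) > 0$) and $D\varphi(\omega) \cdot \omega = \varphi(\omega) > 0$ for $\omega \in \mathbb{S}^{n-1}$ by Euler's identity and coercivity. This rules out collisions of $\overline{DF}(0) = 0$ with any $DF(p)$ for $p \neq 0$ (as $DF(p)\cdot p = f'(\varphi(p))\varphi(p) > 0$) or any $\overline{DF}(\infty\omega)$ (as $\overline{DF}(\infty\omega)\cdot \omega = f^\infty \varphi(\omega) > 0$). For two distinct points in $\R^n \setminus \{0\}$, Lemma \ref{lem: strict convexity} (applicable since $\varphi$ vanishes only at $0$ by coercivity, is strictly convex in the sense of Reshetnyak, and $f$ is strictly convex and increasing) makes $F$ strictly convex, so $DF$ is strictly monotone and hence injective. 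For two points on $\mathbb{S}^{n-1}_\infty$, $D\varphi(\omega_1) = D\varphi(\omega_2)$ together with $0$-homogeneity gives $D\varphi(\omega_i/\varphi(\omega_i))$ equal, and Lemma \ref{lem: injective on spheres} applied on $\{\varphi = 1\}$ yields $\omega_1/\varphi(\omega_1) = \omega_2/\varphi(\omega_2)$, whence $\omega_1 = \omega_2$ since both are unit vectors.

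The remaining and most delicate case is $(p,\infty\omega)$ with $p \in \R^n \setminus \{0\}$. I would test the hypothetical equality $f'(\varphi(p)) D\varphi(p) = f^\infty D\varphi(\omega)$ against $\omega$. The subgradient inequality $D\varphi(p) \cdot q \leq \varphi(q)$ (valid for all $q$, since $\varphi$ is convex with $\varphi(p) = D\varphi(p)\cdot p$ by Euler) yields
\[
f^\infty \varphi(\omega) = f'(\varphi(p))\, D\varphi(p) \cdot \omega \leq f'(\varphi(p)) \varphi(\omega),
\]
so $f^\infty \leq f'(\varphi(p))$; but strict convexity of $f$ makes $f'$ strictly increasing, hence $f'(t) < f^\infty$ for every finite $t$, a contradiction.
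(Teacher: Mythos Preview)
Your proof is correct and follows essentially the same route as the paper: establish continuity and injectivity, then appeal to compactness of $\overline{\R^n}$. The only notable variation is in the cross case $p\in\R^n\setminus\{0\}$ versus $\infty\omega$: the paper applies the polar anisotropy $\varphi^*$ to both sides and uses the identity $\varphi^*(D\varphi(p))=1$ to conclude $f'(\varphi(p))=f^\infty$, whereas you dot with $\omega$ and use the subgradient inequality $D\varphi(p)\cdot\omega\le\varphi(\omega)$ to reach $f^\infty\le f'(\varphi(p))$. These are two phrasings of the same underlying fact (the subgradient inequality is exactly the statement $\varphi^*(D\varphi(p))\le 1$, with Euler's identity giving equality), so the arguments are equivalent; yours has the minor advantage of not needing to introduce $\varphi^*$. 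Your explicit separate treatment of the origin is not strictly needed (continuity of $DF$ on all of $\R^n$ is noted at the start of Section~\ref{sec:regular}, and strict monotonicity already covers $0$ versus $p\ne0$ in $\R^n$), but it does no harm and cleanly covers the case $0$ versus $\infty\omega$ which the paper leaves implicit.
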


\begin{proof}
Since $\overline{\R^n}$ is compact, it suffices to prove that $\overline{DF}$ is continuous and injective. Continuity is easily established by considering three cases. Firstly, take a~point $p_0 \in \R^n$ and any sequence $p_j$ converging to $p_0$ in metric $\overline{d}$. Then, necessarily, almost all $p_j$ lie in $\R^n$ as well, so continuity at such a~point follows from continuity of $DF$. Secondly, if one approaches $p_0 = \infty \omega_0 \in \mathbb{S}_\infty^{n-1}$ with a~sequence of points $(p_j) \subset \R^n$, then writing $p_j = r_j \omega_j$ with $r_j \geq 0$, $\omega_j \in \mathbb{S}^{n-1}$ gives
\[
	DF(p_j) = f'(r_j \varphi(\omega_j))D\varphi(\omega_j) \to f^\infty D\varphi(\omega_0) = \overline{DF}(p_0),
\]
as $\{\varphi(\omega_j)\}_j$ is bounded away from $0$ and $r_j \to +\infty$, and $D\varphi$ is continuous on the unit sphere. Lastly, continuity of $\overline{DF}$ restricted to $\mathbb{S}^{n-1}_\infty$ follows directly from continuity of $D\varphi$ on the unit sphere.

To prove injectivity, observe that $F$ being strictly convex implies $DF\colon \R^n \to \R^n$ being strictly monotone, so $DF$ is one-to-one. Therefore, $\overline{DF}(p) \neq \overline{DF}(q)$ for any distinct $p, q \in \R^n$. In view of Lemma \ref{lem: injective on spheres}, it suffices to check that $\overline{DF}(p) \neq \overline{DF}(q)$ for $p \in \R^n$ and $q \in \mathbb{S}_\infty^{n-1}$. To this end, let us recall that for any $p \in \R^{n}\setminus \{0\}$ it is true that
\begin{equation} \label{dual norm derivative}
	\varphi^* \left(D\varphi(p) \right) = 1, \text{ where } \varphi^*(p)\! : = \sup \left \{ p \cdot q\colon  \varphi(q) \leq 1 \right \}.
\end{equation}
Indeed, taking $q = p/\varphi(p)$ gives $\varphi^*\left(D\varphi(p)\right) \geq 1$. On the other hand, the product $q \cdot D\varphi(p)$ equals the derivative of $\varphi$ at point $p$ in direction $q$ and hence, by triangle inequality,
\begin{equation*}
	\varphi^*\left( D\varphi(p) \right) = \sup_{\varphi(q) \leq 1} \lim_{t \to 0^+} \frac{1}{t} \left( \varphi(p + tq) - \varphi(p) \right) \leq \sup_{\varphi(q) \leq 1} \varphi(q) = 1.
\end{equation*}

Assume now that $DF(p) = f^\infty D\varphi(\omega)$ for some $p \in \R^n \setminus \{0\}$ and $\omega \in \mathbb{S}^{n-1}_1$. Applying $\varphi^*$ to both sides and using its positive homogeneity results in
\begin{equation*}
	f'(\varphi(p)) \varphi^* \left( D \varphi(p) \right) = f^\infty \varphi^*\left( D\varphi(\omega) \right).
\end{equation*}
Given \eqref{dual norm derivative}, one gets $f'(\varphi(p)) = f^\infty$, which contradicts strict convexity of $f$. Therefore, $\overline{DF}$ is injective.
\end{proof}

\section{Approximate functionals} \label{sec: approximate}

Suppose $h \in L^2(I, \R^n) \cap BV(U, \R^n)$ for an open interval $U \subset I$ and extend $h$ by zero beyond $I$ in order to properly define mollifications of $h$. Set
\begin{equation}
	h^\eps = h \ast \varrho_\eps.
\end{equation}
We suppose that $F_{\eta}$ is given either by 
\begin{equation} \label{Feta1def}
	F_\eta = f_\eta \circ \overline{\varphi}_\eta + \frac{\eta}{2}|\cdot|^2 
\end{equation} 
or 
\begin{equation} \label{Feta2def}
	F_\eta = f_\eta \circ \ourphi + \frac{\eta}{2} \widetilde{\widetilde{\varphi}}_\eta.
\end{equation}
In \eqref{Feta1def}, \eqref{Feta2def} we have used $f_\eta$, $\overline{\varphi}_\eta$, $\widetilde{\varphi}_\eta$ and $\widetilde{\widetilde{\varphi}}_\eta$ defined in \eqref{eq: f eta}, Lemma \ref{lem: schneider}, Proposition \ref{goodnorm} and Lemma \ref{lem: smooth phi sqr} for given anisotropy $\varphi \colon \R^n \to [0, \infty[$ and convex, increasing $f \colon [0, \infty[ \to [0, \infty[$ of linear growth. Observe that $F_\eta$ is smooth and that $D^2 F_\eta \geq \eta \Id$. Moreover, since $\ourphi \geq \varphi$ and $f_\eta \geq f$ due to convexity of $f$, it is true that $F_\eta \geq F$.

We will use a two-layer approximation of $\E$: 
\begin{align*}
	\E_{\eps, \eta} (w) &= \lambda \int_I F_\eta(w_x) \dd \Lb^1 + \frac{1}{2} \int_I |w - h^{\eps} |^2 \dd \Lb^1 \text{ for } w \in W^{1,2}(I, \R^n) \text{ with mimimizer } u^{\eps, \eta}, \\
	\E_\eps(w) &= \lambda \int_I F(w_x) \dd \Lb^1  + \frac{1}{2} \int_I |w-h^\eps|^2 \dd \Lb^1 \text{ for } w \in L^2(I, \R^n) \text{ with mimimizer } u^{\eps}. 
\end{align*}
Observe that $F_\eta$ and $F$ are strictly convex, non-negative, weakly lower semicontinuous in $W^{1,2}$ and $L^2$, respectively. Moreover the sublevels of $\E_{\eps, \eta}$, $\E_\eps$ are weakly compact in $W^{1,2}$ and $L^2$, respectively. Consequently, there indeed exist unique minimizers of these functionals.

\begin{lemma} \label{lem: gamma conv eta}
	Functionals $\E_{\eps, \eta}$ $\Gamma$-converge to $\E_{\eps}$ w.\,r.\,t.\ weak $W^{1,2}(I, \R^n)$ convergence.
\end{lemma}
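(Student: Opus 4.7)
The plan is to verify the two standard conditions defining $\Gamma$-convergence: the liminf inequality and the existence of a recovery sequence.

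For the liminf inequality, suppose $w^\eta \rightharpoonup w$ in $W^{1,2}(I,\R^n)$ as $\eta \to 0$. By the Rellich--Kondrachov compact embedding $W^{1,2}(I,\R^n) \hookrightarrow L^2(I,\R^n)$, valid since $I$ is a bounded interval, $w^\eta \to w$ strongly in $L^2$, hence the fidelity term $\tfrac{1}{2}\int_I |w^\eta - h^\eps|^2 \dd \Lb^1$ converges to $\tfrac{1}{2}\int_I |w - h^\eps|^2 \dd \Lb^1$. For the regularizing term, I will invoke the pointwise bound $F_\eta \geq F$ (noted immediately before the statement) to get $\int_I F_\eta(w_x^\eta) \dd \Lb^1 \geq \int_I F(w_x^\eta) \dd \Lb^1$, and then apply the classical weak lower semicontinuity of the convex integral functional $v \mapsto \int_I F(v)\dd \Lb^1$ on $L^2(I,\R^n)$ (applicable since $F$ is convex and continuous) together with $w_x^\eta \rightharpoonup w_x$ in $L^2$ to conclude $\liminf_\eta \int_I F(w_x^\eta) \dd \Lb^1 \geq \int_I F(w_x)\dd \Lb^1$.

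For the recovery sequence, given $w \in W^{1,2}(I,\R^n)$ I take the constant sequence $w^\eta \equiv w$, which trivially converges strongly, hence weakly, to $w$ in $W^{1,2}$. The fidelity terms agree, so it only remains to prove $\int_I F_\eta(w_x)\dd \Lb^1 \to \int_I F(w_x)\dd \Lb^1$ as $\eta \to 0$, which I plan to obtain via dominated convergence. Pointwise convergence $F_\eta(\xi) \to F(\xi)$ at each $\xi \in \R^n$ follows from $\overline{\varphi}_\eta(\xi) \to \varphi(\xi)$ (respectively $\ourphi(\xi) \to \varphi(\xi)$) by Lemma~\ref{lem: schneider} and Proposition~\ref{goodnorm}, uniform convergence $f_\eta \to f$ (Remark~\ref{remark: f(t)/t}, using that $f$ is Lipschitz by linear growth), continuity of $f$, and vanishing of the extra term $\tfrac{\eta}{2}|\xi|^2$ or $\tfrac{\eta}{2}\widetilde{\widetilde{\varphi}}_\eta(\xi)$. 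For the dominating function, the linear growth of $f_\eta$ with constants uniform for $\eta \leq 1$, together with the elementary bound $\overline{\varphi}_\eta(\xi) \leq C|\xi|$ (and the analogous estimate for $\ourphi$), gives $F_\eta(\xi) \leq C(1 + |\xi| + |\xi|^2)$ for $\eta \leq 1$, and this upper bound composed with $w_x$ lies in $L^1(I)$ since $w_x \in L^2(I)$.

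No serious obstacle is anticipated; the argument is essentially routine. The only point that requires a bit of care is the verification that the constants in the bounds on $f_\eta$ and $\overline{\varphi}_\eta$ (or $\ourphi$) can be chosen uniformly for small $\eta$, but this is immediate from the constructions and the linear growth hypothesis on $f$ together with the comparability of $\varphi$ with $|\cdot|$.
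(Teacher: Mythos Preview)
Your proposal is correct and follows essentially the same approach as the paper: the liminf inequality via $F_\eta \geq F$ together with weak lower semicontinuity of the convex integral functional, and the recovery sequence taken constant with convergence of $\int_I F_\eta(w_x)\dd\Lb^1$ obtained by dominated convergence. The only cosmetic differences are that the paper bundles the fidelity and regularizing terms into a single appeal to lower semicontinuity of $\E_\eps$, and that it dominates $f_\eta \circ \overline{\varphi}_\eta(w_x)$ by $C(1+|w_x|)$ while treating the quadratic perturbation separately, whereas you use the combined bound $C(1+|w_x|+|w_x|^2)$---both are equally valid since $w_x \in L^2(I)$.
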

\begin{proof}
	As $F_\eta \geq F$, then $\E_{\eps, \eta} \geq \E_\eps$, hence lower limit inequality follows from lower semicontinuity of $E_\eps$, see e.\,g.\ \cite[Chapter 8.2, Theorem 1]{evans}.
	As for the recovery sequence, choose $w_{\eta} = w$, then one only needs to show that
	\begin{equation} \label{f eta limit}
		\lim_{\eta \to 0}\int_I F_\eta(w_x) \dd \Lb^1 = \int_I F(w_x) \dd \Lb^1.
	\end{equation}
	Since $\overline{\varphi}$ (resp.\ $\ourphi$) is an anisotropy and $f_\eta$ is of linear growth, we have $f_\eta \circ \overline{\varphi} (w_x) \leq C( 1 + |w_x|)$ (resp.\ $f_\eta \circ \ourphi (w_x) \leq C( 1 + |w_x|)$), which is integrable over $I$, implying that dominated convergence theorem can be applied. Additionally, we have
	\begin{equation*}
		\lim_{\eta \to 0} \frac{\eta}{2} \int_I \widetilde{\widetilde{\varphi}}_\eta (w_x) \dd \Lb^1 \leq \lim_{\eta \to 0} \frac{\eta}{2} \int_I C (1 + |w_x|^2) \dd \Lb^1 = 0,
	\end{equation*}
	which proves \eqref{f eta limit} and concludes the proof of the lemma.
\end{proof}

\begin{lemma} \label{lem: gamma conv eps 1}
	Functionals $\E_\eps$ $\Gamma$-converge to $\E$ w.\,r.\,t.\ weak convergence in $L^2(I, \R^n)$.
\end{lemma}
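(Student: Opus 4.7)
The strategy is to verify the two standard conditions for $\Gamma$-convergence with respect to the weak $L^2(I,\R^n)$ topology. The key inputs are (a) the weak lower semicontinuity of $\F$ on $L^2(I,\R^n)$, which follows from $\F$ being convex and defined as an $L^1_{\mathrm{loc}}$-lower semicontinuous envelope, and (b) the strong $L^2$-convergence $h^\eps \to h$, which follows from $h \in L^2(I,\R^n)$ after extension by zero and standard properties of mollification.

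For the recovery sequence, given $w \in L^2(I,\R^n)$ I will simply take the constant sequence $w_\eps = w$, which trivially converges weakly (and strongly) to $w$. Then
\[
\E_\eps(w) - \E(w) = \tfrac{1}{2}\int_I \bigl(|w-h^\eps|^2 - |w-h|^2\bigr) \dd \Lb^1,
\]
and since $h^\eps \to h$ strongly in $L^2$, a polarization identity (or direct expansion) shows the right-hand side tends to zero. Hence $\lim_{\eps \to 0} \E_\eps(w) = \E(w)$, giving the limsup inequality.

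For the liminf inequality, consider any sequence $w_\eps \rightharpoonup w$ weakly in $L^2(I,\R^n)$. By the weak lower semicontinuity of $\F$ on $L^2(I,\R^n)$,
\[
\liminf_{\eps \to 0} \lambda \F(w_\eps) \geq \lambda \F(w).
\]
For the fidelity term, since $w_\eps \rightharpoonup w$ weakly and $h^\eps \to h$ strongly in $L^2$, we get $w_\eps - h^\eps \rightharpoonup w - h$ weakly in $L^2(I,\R^n)$. The weak lower semicontinuity of the squared $L^2$-norm then yields
\[
\liminf_{\eps \to 0} \tfrac{1}{2} \int_I |w_\eps - h^\eps|^2 \dd \Lb^1 \geq \tfrac{1}{2} \int_I |w - h|^2 \dd \Lb^1.
\]
Summing the two displays delivers $\liminf_{\eps \to 0} \E_\eps(w_\eps) \geq \E(w)$, completing the verification.

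There is no substantial obstacle here; the only point requiring any care is ensuring that the splitting into the regularizer and fidelity terms is compatible with weak convergence, which is handled by the fact that strong convergence of $h^\eps$ preserves weak convergence under subtraction.
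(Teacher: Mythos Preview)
Your proof is correct and follows essentially the same approach as the paper: a constant recovery sequence combined with strong $L^2$-convergence of $h^\eps$ for the $\limsup$ inequality, and weak lower semicontinuity of both $\F$ (convex and $L^1_{\mathrm{loc}}$-l.s.c., hence weakly $L^2$-l.s.c.) and the squared $L^2$-norm for the $\liminf$ inequality. You are in fact slightly more explicit than the paper in noting that $w_\eps - h^\eps \rightharpoonup w - h$ follows from weak plus strong convergence, and your treatment covers all $w\in L^2(I,\R^n)$ rather than just $w\in BV(I,\R^n)$ (the case $\F(w)=+\infty$ being handled automatically by the same lower semicontinuity argument).
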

\begin{proof}
	Since $h^\eps$ converge strongly to $h$ in $L^2(I, \R^n)$, for each $w \in BV(I, \R^n)$ there exists a~trivial recovery sequence. 
	
	Then, take $w \in BV(I, \R^n)$ and a~sequence $w^\eps$ converging to $w$ weakly in $L^2$. Since the norm is lower semicontinuous w.\,r.\,t.\ weak convergence,
	\begin{equation*}
		\liminf_{\eps \to 0} \int_I |w^\eps - h^\eps|^2 \dd \Lb^1 \geq \int_I |w - h|^2 \dd \Lb^1. 
	\end{equation*}
	
	Lower semicontinuity of $\F$ with respect to strong $L^2(I, \R^n)$ convergence follows from the definition of $\F$, as $L^2$ convergence is stronger than $L^1_{loc}$ convergence. Since $\F$ is convex, it is also lower semicontinuous with respect to weak $L^2$ convergence.
\end{proof}

Since $\E_{\eps, \eta}$ is smooth, the minimizer admits weak second derivatives and satisfies the following Euler--Lagrange equation
 \begin{equation} \label{Euler-Lagrange-eps-eta}
	u^{\eps, \eta} - h^\eps = \lambda \left(DF_\eta(u_x^{\eps, \eta})\right)_x \text{ in } I, \quad u_x^{\eps, \eta} = 0 \text{ on } \partial I.
\end{equation}
Due to the fact that $D^2F_\eta \geq \eta \Id$, we gather that $u^{\eps, \eta} \in W^{2,2}(I, \R^n)$, which in particular means that $u^{\eps, \eta} \in C^{1}(\bar{I}, \R^n)$.

\begin{lemma} \label{lem: cpt eta}
As $\eta \to 0$, the sequence of minimizers $u^{\eps, \eta}$ converges strongly in $L^2(I, \R^n)$ and weakly in $W^{1,2}(I, \R^n)$ to the minimizer $u^\eps$.
\end{lemma}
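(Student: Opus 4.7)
The plan is to combine a $\Gamma$-convergence argument, in the spirit of Lemma \ref{lem: gamma conv eta}, with a compactness argument based on the one-dimensional compact embedding $BV(I) \hookrightarrow L^2(I)$.

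First I would establish a uniform (in $\eta$) energy bound by testing minimality against the competitor $h^\eps$: since $h^\eps \in C^\infty(\bar I, \R^n)$ and $F_\eta \to F$ locally uniformly with an $\eta$-independent linear-growth upper constant (cf.\ Proposition \ref{goodnorm}(ii) and Remark \ref{remark: f(t)/t}), $\E_{\eps,\eta}(h^\eps)$ stays uniformly bounded, and hence $\E_{\eps,\eta}(u^{\eps,\eta}) \leq C$ independently of $\eta$.

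From coercivity of $\varphi$ and the linear growth of $f$ one then obtains a matching lower bound $F_\eta(\xi) \geq c|\xi| - C$ with $\eta$-independent constants, which combined with the energy bound yields $\|u^{\eps,\eta}_x\|_{L^1(I)} \leq C$ and $\|u^{\eps,\eta}-h^\eps\|_{L^2(I)} \leq C$. So $(u^{\eps,\eta})_\eta$ is bounded in $BV(I,\R^n) \cap L^2(I,\R^n)$, and the compact embedding $BV(I) \hookrightarrow L^p(I)$ for $p<\infty$ in one dimension produces a subsequence $u^{\eps,\eta_k}$ converging strongly in $L^2(I,\R^n)$ and weakly-$*$ in $BV$ to some $\tilde u$. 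The liminf inequality $\liminf_k \E_{\eps,\eta_k}(u^{\eps,\eta_k}) \geq \E_\eps(\tilde u)$ follows from $F_\eta \geq F$ together with the $L^1_{loc}$-lower semicontinuity of $\F$ built into its definition \eqref{L2_env}. For the matching upper bound I would use, in place of the simple recovery $w_\eta \equiv u^\eps$ of Lemma \ref{lem: gamma conv eta} (which is only admissible when $u^\eps \in W^{1,2}$), a diagonal sequence built from the smooth approximations of $u^\eps$ provided by the definition of $\F$, combined with the locally uniform convergence $F_\eta \to F$. Hence $\tilde u$ minimizes $\E_\eps$; by uniqueness $\tilde u = u^\eps$, and the whole family converges strongly in $L^2$.

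The remaining weak $W^{1,2}$ convergence is more delicate. In 1D the Euler--Lagrange equation \eqref{Euler-Lagrange-eps-eta} shows that $DF_\eta(u^{\eps,\eta}_x)$ is the antiderivative of $(u^{\eps,\eta}-h^\eps)/\lambda$, hence uniformly bounded in $W^{1,2}(I,\R^n) \hookrightarrow L^\infty(I,\R^n)$; combined with the smoothness of $h^\eps$ this should promote the strong $L^2$ convergence to weak $W^{1,2}$ convergence once $u^\eps$ itself is identified to lie in $W^{1,2}$. The main obstacle is precisely this last step: the ellipticity constant in $D^2F_\eta \geq \eta \Id$ degenerates as $\eta \to 0$, so a uniform $W^{1,2}$ bound on $u^{\eps,\eta}$ cannot be extracted from the energy alone and must rely on the regularity of $u^\eps$ inherited from the smooth datum $h^\eps$.
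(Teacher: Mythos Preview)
Your strong $L^2$ convergence argument via $BV$ compactness and $\Gamma$-convergence is fine, but you correctly identify the gap: you never obtain a uniform $W^{1,2}$ bound on $u^{\eps,\eta}$, and without it you cannot assert weak $W^{1,2}$ convergence (nor, a priori, that $u^\eps \in W^{1,2}$ at all). Your attempt to salvage this via uniform $L^\infty$ control of $DF_\eta(u^{\eps,\eta}_x)$ fails exactly for the reason you name: since the coercivity $D^2F_\eta \geq \eta\,\mathrm{Id}$ degenerates, boundedness of $DF_\eta(u^{\eps,\eta}_x)$ says nothing about boundedness of $u^{\eps,\eta}_x$ itself.

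The missing idea, and the one used in the paper, is to test the Euler--Lagrange equation \eqref{Euler-Lagrange-eps-eta} with $u^{\eps,\eta}_{xx}$ rather than trying to invert $DF_\eta$. After integrating by parts on the left (the boundary term vanishes since $u^{\eps,\eta}_x=0$ on $\partial I$), the right-hand side becomes $\lambda\int_I u^{\eps,\eta}_{xx}\cdot D^2F_\eta(u^{\eps,\eta}_x)\,u^{\eps,\eta}_{xx}\,\dd\Lb^1 \geq 0$ by convexity of $F_\eta$; the point is that this term has a \emph{sign} regardless of how small $\eta$ is, so you simply drop it. What remains is
\[
\int_I |u^{\eps,\eta}_x|^2 \,\dd\Lb^1 \leq \int_I u^{\eps,\eta}_x \cdot h^\eps_x \,\dd\Lb^1,
\]
and Young's inequality gives $\|u^{\eps,\eta}_x\|_{L^2} \leq \|h^\eps_x\|_{L^2}$, uniformly in $\eta$. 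With this bound in hand, weak $W^{1,2}$ compactness is immediate, the $\Gamma$-convergence of Lemma~\ref{lem: gamma conv eta} (stated precisely for weak $W^{1,2}$ convergence) applies directly, and the constant recovery sequence $w_\eta \equiv u^\eps$ is admissible since $u^\eps$, as a weak $W^{1,2}$ limit, lies in $W^{1,2}$. The whole argument is thus considerably shorter than your outline suggests.
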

\begin{proof}
Firstly, let us prove that the sequence of minimizers $u^{\eps, \eta}$ is bounded in $W^{1,2}$. The fact that $\E_{\eps, \eta}(u^{\eps, \eta}) \leq \E_{\eps, \eta}(0)$ together with triangle inequality for $L^2$ norm shows that $u^{\eps, \eta}$ are bounded in $L^2$. For the boundedness of the derivatives, let us multiply the Euler--Lagrange equation by $u_{xx}^{\eps, \eta}$ and integrate the l.\,h.\,s.\ by parts. The r.\,h.\,s\ is non-negative due to convexity of $F_\eta$ and the boundary term vanishes, hence
	\begin{equation*}
		\int_{I} |u_x^{\eps, \eta}|^2 \dd \Lb^1 \leq \int_I u_x^{\eps, \eta} \cdot h_x^\eps \dd \Lb^1.
	\end{equation*}
Applying Young's inequality gives the desired bound for $L^2$ norm of the sequence of derivatives of minimizers.

Therefore, from any subsequence $u^{\eps, \eta}$ it is possible to choose a~weakly convergent subsequence $u^{\eps, \eta_j}$. In view of the $\Gamma$-convergence proved in Lemma \ref{lem: gamma conv eta}, the weak limit of $\{u^{\eps, \eta_j}\}_j$ must equal the minimizer $u^\eps$. A standard application of Rellich--Kondrashov theorem allows us to choose a~further subsequence (which we do not relabel) which additionally converges strongly in $L^2$ to $u^\eps$. Therefore, a~properly convergent subsequence can be chosen from arbitrary subsequence of $\{u_x^\eps\}_\eps$, which shows the desired convergence of the whole sequence.
\end{proof}

\begin{lemma} \label{lem: gamma conv eps} 
As $\eps \to 0$, the minimizers $u^\eps$ converge strongly in $L^2(I, \R^n)$ to the minimizer $u$ of $\E$ and $u_x^\eps \weaklystar u_x$ in $M(I, \R^n)$.
\end{lemma}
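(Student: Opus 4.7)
The argument parallels Lemma \ref{lem: cpt eta}: obtain a~uniform bound on $\{u^\eps\}$, extract a~convergent subsequence, identify its limit via the $\Gamma$-convergence of Lemma \ref{lem: gamma conv eps 1}, and upgrade weak to strong $L^2$ convergence via an energy-convergence argument. First I would establish that $\{u^\eps\}$ is uniformly bounded in $BV(I, \R^n) \cap L^2(I, \R^n)$. Comparing $\E_\eps(u^\eps) \leq \E_\eps(0) = \lambda F(0) \Lb^1(I) + \tfrac12 \|h^\eps\|_{L^2}^2$ yields a~uniform bound on $\E_\eps(u^\eps)$, since $h^\eps \to h$ in $L^2$; the linear-growth lower bound \eqref{lin_growth} on $F$ then gives a~uniform bound on $|u^\eps_x|(I)$, while the fidelity term bounds $\|u^\eps\|_{L^2}$. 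From any subsequence I would extract a~further subsequence converging weakly in $L^2$ and, by the compact embedding $BV(I) \hookrightarrow L^1(I)$, strongly in $L^1$ to some $u^\ast \in BV(I, \R^n) \cap L^2(I, \R^n)$, with $u^{\eps_j}_x \weaklystar u^\ast_x$ in $M(I, \R^n)$.

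By the $\Gamma$-convergence of Lemma \ref{lem: gamma conv eps 1}, combining the lower-limit inequality with the trivial recovery sequence at $u$, $u^\ast$ minimizes $\E$, and uniqueness forces $u^\ast = u$. As this applies to every subsequence, the full sequence $\{u^\eps\}$ converges to $u$ weakly in $L^2$ and strongly in $L^1$, with $u^\eps_x \weaklystar u_x$ in $M(I, \R^n)$. To upgrade weak to strong $L^2$ convergence I would prove $\E_\eps(u^\eps) \to \E(u)$, squeezing via $\E_\eps(u^\eps) \leq \E_\eps(u) \to \E(u)$ (using $h^\eps \to h$ in $L^2$) on one side and the $\Gamma$-liminf inequality on the other. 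Both components of $\E_\eps$ are separately lower semicontinuous along the convergent sequence---the variation part via Theorem \ref{thm-gs-lsc} applied to $u^\eps_x \weaklystar u_x$, and the fidelity part by weak $L^2$ lower semicontinuity together with $h^\eps \to h$ in $L^2$---so each must converge individually to its limit value. In particular $\|u^\eps - h^\eps\|_{L^2} \to \|u - h\|_{L^2}$, and combined with $u^\eps - h^\eps \rightharpoonup u - h$ in $L^2$ this forces strong $L^2$ convergence of $u^\eps$ to $u$.

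The main delicacy is this splitting step: only the total energy $\E_\eps(u^\eps)$ is controlled a~priori, so separate lower semicontinuity of each piece is needed to conclude individual convergence. Beyond this, the argument is a~routine combination of compactness, $\Gamma$-convergence, and Hilbert-space strict convexity.
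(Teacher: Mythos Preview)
Your proposal is correct, and the overall structure---uniform bounds, compactness, identification of the limit via $\Gamma$-convergence---matches the paper. The difference lies in how you obtain strong $L^2$ convergence. The paper simply invokes the compact embedding $BV(I,\R^n)\hookrightarrow L^2(I,\R^n)$, valid in one dimension (citing \cite[Corollary~3.49]{ambrosiofuscopallara}), to pass directly from the uniform $BV$ bound to strong $L^2$ convergence of a subsequence; no energy argument is needed. You instead use only the $L^1$ compact embedding and then upgrade weak to strong $L^2$ convergence by proving $\E_\eps(u^\eps)\to\E(u)$, splitting via separate lower semicontinuity of the variation and fidelity terms, and concluding norm convergence of $u^\eps-h^\eps$. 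Your route is longer but more portable: it would survive in higher dimensions $m>1$, where $BV$ does not embed compactly into $L^2$. The paper's route is shorter precisely because it exploits the one-dimensional setting.
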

\begin{proof}
    To see that $u^\eps$ are bounded in $L^2$, it suffices to write $\E_\eps(u^\eps) \leq \E_\eps(0)$ and use triangle inequality for $L^2$ norm. Consequently, from every sequence $u^{\eps_i}$ it is possible to choose a~further subsequence which, by $\Gamma$-convergence proved in Lemma \ref{lem: gamma conv eps 1}, converges weakly in $L^2$ to $u$.

    Moreover, using the lower bound \ref{lin_growth} for $F$, one can see that $\{u^{\eps_i}\}_i$ is bounded in $BV$, which implies an existence of a~subsequence strongly convergent in $L^2$ (see \cite[Corollary 3.49]{ambrosiofuscopallara}) such that the derivatives converge weakly* as measures. 
\end{proof}

\section{The homogeneous case} \label{sec:homo}

\begin{proof}[Proof of Theorem \ref{thm:homo}]
In this section we choose $F_\eta$ defined as in \eqref{Feta1def}. We calculate 
\[
DF_{\eta}(p) = f_\eta'(\overline{\varphi}_\eta(p)) D \overline{\varphi}_\eta(p) + \eta p, 
\]
\[
\left(DF_\eta(u_x^{\eps, \eta})\right)_x = f_\eta'(\overline{\varphi}_\eta(u_x^{\eps, \eta})) D^2 \overline{\varphi}_\eta(u_x^{\eps, \eta}) u_{xx}^{\eps, \eta} + f_\eta''(\overline{\varphi}_\eta(u_x^{\eps, \eta})) \overline{\varphi}_\eta(u_x^{\eps, \eta})_x D \overline{\varphi}_\eta(u_x^{\eps, \eta}) + \eta\, u_{xx}^{\eps, \eta}. 
\]
Testing the Euler--Lagrange equation \eqref{Euler-Lagrange-eps-eta} with $u^{\eps, \eta}_{xx}$, integrating by parts on the l.\,h.\,s.\ and using Cauchy's inequality we obtain a uniform estimate
\begin{equation} \label{homo_eta_unif_bd} 
\int_I |u_x^{\eps, \eta}|^2 \dd \Lb^1 + 2 \lambda \eta \int_I |u_{xx}^{\eps, \eta}|^2 \dd\Lb^1 \leq \int_I |h_x^{\eps}|^2 \dd \Lb^1 .
\end{equation} 

We define 
\begin{equation} 
	\mathfrak G_k (p) = (|p|-k)_+, \quad   \mathfrak G_{k, \delta} (p) = \sqrt{(|p|-k)_+^2 + \delta^2}.
\end{equation}  
Then, 
\[
D \mathfrak G_{k, \delta} (p) = \frac{(|p|-k)_+}{\sqrt{(|p|-k)_+^2 + \delta^2}} \frac{p}{|p|}
\] 
and, given $\zeta \in C_c^1(U)$ 
\begin{multline*}
	\left(D \mathfrak G_{k, \delta} (u_x^{\eps, \eta}) \zeta^2\right)_x  = \frac{(|u_x^{\eps, \eta}|-k)_+}{\sqrt{(|u_x^{\eps, \eta}|-k)_+^2 + \delta^2}} \frac{1}{|u_x^{\eps, \eta}|}\left(I - \frac{u_x^{\eps, \eta}}{|u_x^{\eps, \eta}|} \otimes \frac{u_x^{\eps, \eta}}{|u_x^{\eps, \eta}|}\right)  u_{xx}^{\eps, \eta} \zeta^2 \\ + \frac{\delta^2 \indi_{|u_x^{\eps, \eta}|>k }}{\sqrt{(|u_x^{\eps, \eta}|-k)_+^2 + \delta^2}^3} \frac{u_x^{\eps, \eta}}{|u_x^{\eps, \eta}|} \otimes \frac{u_x^{\eps, \eta}}{|u_x^{\eps, \eta}|}\, u_{xx}^{\eps, \eta} \zeta^2 + \frac{(|u_x^{\eps, \eta}|-k)_+}{\sqrt{(|u_x^{\eps, \eta}|-k)_+^2 + \delta^2}} \frac{u_x^{\eps, \eta}}{|u_x^{\eps, \eta}|} \, 2 \zeta \zeta_x.
\end{multline*}  
We observe that 
\begin{multline}\label{homo_est1} 
	\int_I \left(D \mathfrak G_{k, \delta} (u_x^{\eps, \eta}) \zeta^2\right)_x \cdot u_{xx}^{\eps, \eta} \dd \Lb^1 \geq \int_I 2 \zeta \zeta_x \frac{(|u_x^{\eps, \eta}|-k)_+}{\sqrt{(|u_x^{\eps, \eta}|-k)_+^2 + \delta^2}} \frac{u_x^{\eps, \eta}}{|u_x^{\eps, \eta}|} \cdot u_{xx}^{\eps, \eta} \dd \Lb^1 \\ 
    \geq - 2 \max |\zeta \zeta_x| |I|^\frac{1}{2}\left(\int_I  |u_{xx}^{\eps, \eta}|^2 \dd \Lb^1\right)^{\frac{1}{2}}. 
\end{multline} 

Assuming that $k > \frac{2}{c_\varphi^-} \eta$, we have $|p| \leq k$ or $\overline{\varphi}_\eta(p) \geq 2 \eta$ for all $p \in \R^n$. In the latter case $f'(\overline{\varphi}_\eta(p)) = 1$ and $f''(\overline{\varphi}_\eta(p)) = 0$. Taking into account \eqref{homo_est1}, \eqref{homo_eta_unif_bd} and the observation that 
\[D^2 \overline{\varphi}_\eta(p) \cdot p = 0 \quad \text{for } p \neq 0,\]
we get  
\begin{multline} \label{homo_est2} 
	\int_I \left(D \mathfrak G_{k, \delta} (u_x^{\eps, \eta}) \zeta^2\right)_x \cdot \left(DF_\eta(u_x^{\eps, \eta})\right)_x \dd \Lb^1 \geq - 2 \eta \max |\zeta \zeta_x| |I|^\frac{1}{2} \left(\int_I  |u_{xx}^{\eps, \eta}|^2 \dd \Lb^1 \right)^{\frac{1}{2}} \\ \geq - \frac{\sqrt{2\eta}}{\sqrt{\lambda}} \max |\zeta \zeta_x| |I|^\frac{1}{2}\left(\int_I |h_x^\eps|^2 \dd \Lb^1\right)^{\frac{1}{2}}.  
\end{multline} 
On the other hand, by convexity of $\mathfrak G_{k, \delta}$, 
\begin{multline} \label{homo_rhs_est}  
\int_I \left(D \mathfrak G_{k, \delta} (u_x^{\eps, \eta}) \zeta^2\right)_x \cdot (u^{\eps, \eta} - h^\eps)\dd \Lb^1  = \int_I \zeta^2 D \mathfrak G_{k, \delta} (u_x^{\eps, \eta}) \cdot (h^\eps_x - u^{\eps, \eta}_x)\dd \Lb^1 \\ \leq \int_I \zeta^2 \mathfrak G_{k, \delta} (h_x^\eps)\dd \Lb^1 - \int_I \zeta^2 \mathfrak G_{k, \delta} (u_x^{\eps, \eta})\dd \Lb^1. 
\end{multline} 
Collecting \eqref{homo_est2} and \eqref{homo_rhs_est} and passing to the limit $\delta \to 0^+$ using dominated convergence, we infer that 
\begin{equation} \label{homo_est3} 
	 \int_I \zeta^2 \mathfrak G_k (u_x^{\eps, \eta}) \dd \Lb^1 \leq \int_I \zeta^2 \mathfrak G_k (h_x^\eps) \dd \Lb^1 + \frac{\sqrt{2\eta}}{\sqrt{\lambda}} \max |\zeta \zeta_x| |I|^\frac{1}{2} \left(\int_I |h_x^\eps|^2 \dd \Lb^1\right)^{\frac{1}{2}}.
\end{equation}
As shown in Lemma \ref{lem: cpt eta}, $u_x^{\eps, \eta}$ converge weakly in $L^2$ to $u_x^{\eps}$ as $\eta \to 0^+$. Therefore, by lower semicontinuity of $w_x \mapsto \int \mathfrak G_k(w_x) \dd \Lb^1$, inequality \eqref{homo_est3} yields
\begin{equation} \label{homo_est4} 
	\int_I \zeta^2 \mathfrak G_k (u_x^\eps) \dd \Lb^1 \leq \int_I \zeta^2 \mathfrak G_k (h_x^\eps) \dd \Lb^1.
\end{equation}

By Lemma~\ref{lem: gamma conv eps}, we know that $u_x^\eps \weaklystar u_x$. As a~result, Lemmata \ref{cor-gs-lsc-zeta} and \ref{lem: G weaklystar} mean that the following inequality holds
\[
    \int_U \zeta^2 \dd \mathfrak{G}_k(u_x) \leq \int_U \zeta^2 \dd \mathfrak G_k(h_x).
\]
Employing Lemma \ref{lem: mu leq nu} again goes to show that
\begin{equation*} 
	\mathfrak{G}_k (u_x) \leq \mathfrak{G}_k (h_x) 
\end{equation*}
as measures or, in other words,
\begin{equation*} 
	\mathfrak |u_x^s|(V) + \int_V (|u_x^{ac}|-k)_+ \dd \Lb^1  \leq |h_x^s|(V) + \int_V (|h_x^{ac}|-k)_+ \dd \Lb^1 
\end{equation*}
for any $V$ Borel. Passing to the limit $k\to 0^+$ we obtain the desired assertion.
\end{proof}

\section{The general case} \label{sec:coercive} 
From now on, we choose $F_\eta$ to be defined as in \eqref{Feta2def}. Let us multiply both sides of the Euler--Lagrange equation \eqref{Euler-Lagrange-eps-eta} by $(\zeta^2 G_k(u_x^{\eps, \eta}))_x$, where $\zeta \in C_c^1(U)$ and
\begin{equation*}
	G_k(p) = g_k \circ \tilde{\varphi}_\eta(p)p \text{ and } g_k(\sigma) = \frac{(\sigma - k)_+}{\sigma^2} f'_\eta(\sigma).
\end{equation*}
Observe that $G_k(p)$ is bounded since $f_\eta$ is of linear growth and $\ourphi$ is a~coercive anisotropy. After integrating and differentiating by parts on the l.\,h.\,s. we get
\begin{equation} \label{eq: after mult}
	\frac{1}{\lambda} \int_I \zeta^2 G_k(u_x^{\eps, \eta}) \cdot  (u_x^{\eps, \eta} - h_x^{\eps}) \dd \Lb^1 = - \int_I (\zeta^2 G_k(u_x^{\eps, \eta}))_x \cdot D^2 F_\eta (u_x^{\eps, \eta}) \cdot u_{xx}^{\eps, \eta} \dd \Lb^1.
\end{equation}

\begin{prop} \label{prop: general lhs eps}
	The l.\,h.\,s.\ of \eqref{eq: after mult} satisfies
\begin{align*}
	\liminf_{\eps \to 0} \liminf_{\eta \to 0} & \int_I \zeta^2 G_k(u_x^{\eps, \eta}) \cdot (u_x^{\eps, \eta} - h_x^{\eps}) \dd \Lb^1 \geq \\
	& \frac{r(k)}{ (c_\varphi^+)^{3}} \int_I \zeta^2 \dd (c_\varphi^- |u_x| - k)_+(c_\varphi^+ |u_x| - k)_+ |u_x|^{-1} - \frac{f^\infty}{ c_\varphi^- c_\varphi^+} \int_I \zeta^2 \dd (c_\varphi^+ |h_x| - k)_+,
\end{align*}
where $\lim_{k \to \infty} r(k) = f^\infty$.
\end{prop}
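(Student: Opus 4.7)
The plan is to split the integrand pointwise as
\[\zeta^2 G_k(u_x^{\eps,\eta})\cdot(u_x^{\eps,\eta}-h_x^\eps) = A^{\eps,\eta} - B^{\eps,\eta},\]
where $A^{\eps,\eta} := \zeta^2 g_k(\sigma)|u_x^{\eps,\eta}|^2$ and $B^{\eps,\eta} := \zeta^2 G_k(u_x^{\eps,\eta})\cdot h_x^\eps$, with $\sigma := \ourphi(u_x^{\eps,\eta})$ (or $\sigma := \ourphi(p)$ when the argument is a generic $p$). Each piece is estimated separately and the iterated liminf is reassembled via $\liminf_{\eps}\liminf_{\eta}(A-B) \geq \liminf_{\eps}\liminf_{\eta} A - \limsup_{\eps}\limsup_{\eta} B$.

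For the first piece, the key pointwise inequality I expect to establish is
\[g_k(\sigma)|p|^2 \;\geq\; \frac{f_\eta'(k)}{(c_{\ourphi}^+)^3}\,H(p), \qquad H(p) := \frac{(c_\varphi^-|p|-k)_+\,(c_\varphi^+|p|-k)_+}{|p|}\;\;(H(0):=0).\]
It follows by combining the monotonicity $f_\eta'(\sigma)\geq f_\eta'(k)$ for $\sigma\geq k$ (convexity of $f$), the two-sided sandwich $c_\varphi^-|p|\leq\ourphi(p)\leq c_{\ourphi}^+|p|$, and elementary manipulation of $(\sigma-k)_+/\sigma$ using that $t\mapsto(t-k)_+/t$ is non-decreasing. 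A direct check shows $H$ is convex of linear growth (its radial profile is convex and non-decreasing on $[0,\infty[$). Applying Corollary \ref{cor-gs-lsc-zeta} together with $u_x^{\eps,\eta}\rightharpoonup u_x^\eps$ in $L^2(I,\R^n)$ (Lemma \ref{lem: cpt eta}) and $u_x^\eps\weaklystar u_x$ in $M(I,\R^n)$ (Lemma \ref{lem: gamma conv eps}) then yields
\[\liminf_{\eps\to 0}\liminf_{\eta\to 0}\int_I A^{\eps,\eta}\dd\Lb^1 \;\geq\; \frac{r(k)}{(c_\varphi^+)^3}\int_I\zeta^2\dd H(u_x),\]
with $r(k):=f'(k)\to f^\infty$ as $k\to\infty$ by Remark \ref{remark: f(t)/t}.

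For the second piece I exploit the anisotropic duality $p\cdot q\leq\ourphi(p)\,\ourphi^*(q) = \sigma\,\ourphi^*(q)$, the dual-norm bound $\ourphi^*(q)\leq|q|/c_{\ourphi}^-$, and the estimate $g_k(\sigma)\sigma = f_\eta'(\sigma)(1-k/\sigma)_+\leq f^\infty$, giving $G_k(p)\cdot q \leq (f^\infty/c_{\ourphi}^-)(1-k/\sigma)_+|q|$. Since $G_k(p)\neq 0$ forces $\sigma>k$, a case analysis on whether $\sigma\leq c_\varphi^+|q|$ or $\sigma> c_\varphi^+|q|$ is then carried out: on the former, monotonicity of $t\mapsto(1-k/t)_+$ combined with $\sigma\leq c_\varphi^+|q|$ yields the sharp pointwise bound $G_k(p)\cdot q \leq \frac{f^\infty}{c_\varphi^- c_\varphi^+}(c_\varphi^+|q|-k)_+$. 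On the complementary set (``$|p|$ large compared to $|q|$'') the pointwise bound fails; the discrepancy is absorbed into a small fraction of $A^{\eps,\eta}$ via a Young-type inequality $p\cdot q\leq\frac{\alpha}{2}|p|^2+\frac{1}{2\alpha}|q|^2$, with $\alpha$ tuned so that the combined lower bound still carries the full constant $r(k)/(c_\varphi^+)^3$ on $H(u_x)$ while the residual $|q|$-part is controlled by a multiple of $(c_\varphi^+|q|-k)_+$. Applying Corollary \ref{cor-gs-lsc-zeta} and Lemma \ref{lem: G weaklystar} to the convex function of linear growth $q\mapsto(c_\varphi^+|q|-k)_+$ together with $h_x^\eps\weaklystar h_x$ in $M(I,\R^n)$ then gives
\[\limsup_{\eps\to 0}\limsup_{\eta\to 0}\int_I B^{\eps,\eta}\dd\Lb^1 \;\leq\; \frac{f^\infty}{c_\varphi^- c_\varphi^+}\int_I\zeta^2\dd(c_\varphi^+|h_x|-k)_+.\]

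The main obstacle is precisely the sharp form of the $B$-bound: the crude $L^\infty$ estimate $|G_k|\leq f^\infty/c_\varphi^-$ alone yields only $\limsup B\leq(f^\infty/c_\varphi^-)\int\zeta^2\dd|h_x|$, strictly weaker than the claim. Refining this to the shifted form $(c_\varphi^+|h_x|-k)_+$ with constant $f^\infty/(c_\varphi^- c_\varphi^+)$ requires the full interplay between the $\sigma$-dependence of $G_k$ and the ``large $|u_x^{\eps,\eta}|$, small $|h_x^\eps|$'' regime; this is the technical heart of the argument. Combining the two one-sided bounds then yields the proposition.
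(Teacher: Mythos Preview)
Your lower bound on the $A$-part is correct, and so is your Case~1 bound on $B$ (when $\sigma\le c_\varphi^+|q|$). The gap is in Case~2. Take the worst regime $c_\varphi^+|q|\le k<\sigma$: here $(c_\varphi^+|q|-k)_+=0$, while $G_k(p)\cdot q$ can be as large as $g_k(\sigma)|p|\cdot k/c_\varphi^+$. Since $\sigma>k$ only forces $|p|>k/c_{\ourphi}^+$, the ratio $G_k(p)\cdot q/A$ can approach $c_{\ourphi}^+/c_\varphi^+\approx 1$, so no Young parameter $\alpha$ makes $B$ a \emph{small} fraction of $A$: absorbing $B$ consumes essentially all of $A$ (indeed slightly more, by an $O(\delta)$ defect), and the ``residual $|q|$-part'' is certainly not controlled by $(c_\varphi^+|q|-k)_+=0$. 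Thus the splitting into separate pointwise bounds for $A$ and $B$ cannot deliver the constants stated in the proposition.

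The paper avoids this by \emph{not} separating $A$ and $B$: it keeps $p\cdot(p-q)$ together and applies the subgradient inequality for the convex function $r\mapsto(C\,r-k)_+$ with $C=c_\varphi^++\delta$,
\[
C\,\indi_{\{C|p|>k\}}\,\tfrac{p}{|p|}\cdot(p-q)\ \ge\ (C|p|-k)_+-(C|q|-k)_+,
\]
and then multiplies by the nonnegative prefactor $g_k(\sigma)|p|/C$. The two resulting pieces are estimated \emph{with different uses of the sandwich} $c_\varphi^-|p|\le\sigma\le(c_\varphi^++\delta)|p|$: the positive piece produces $\frac{f'_\eta(k)}{(c_\varphi^++\delta)^3}H(p)$ (this is in fact where the second factor $(c_\varphi^+|p|-k)_+$ in $H$ originates), and for the negative piece one uses $\tfrac{(\sigma-k)_+|p|}{\sigma^2}\le \tfrac{1}{c_\varphi^-}$ to get $\frac{f^\infty}{c_\varphi^-(c_\varphi^++\delta)}((c_\varphi^++\delta)|q|-k)_+$. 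The point is that the $q$-dependence enters only through $(C|q|-k)_+$, already coupled to the same prefactor as the $p$-term, so no absorption step is required and the sharp constants survive the limits $\eta\to0$, $\eps\to0$.
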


\begin{proof}
	We will firstly show that for a~fixed $\eps$, it is true that
\begin{align} \label{eq: lhs general eta}
\begin{split}
	\liminf_{\eta \to 0} & \int_I \zeta^2 G_k(u_x^{\eps, \eta}) \cdot (u_x^{\eps, \eta} - h_x^{\eps}) \dd \Lb^1 \geq \\
	& \frac{r(k)}{ (c_\varphi^+)^{3}} \int_I \zeta^2 (c_\varphi^- |u_x^{\eps}| - k)_+(c_\varphi^+ |u_x^{\eps}| - k)_+ |u_x^{\eps}|^{-1} \dd \Lb^1 - \frac{f^\infty}{ c_\varphi^- c_\varphi^+} \int_I \zeta^2 (c_\varphi^+ |h_x^\eps| - k)_+ \dd \Lb^1.
\end{split}
\end{align}
Since $\ourphi(p) \geq \varphi(p)$ and due to local uniform convergence of $\ourphi$ to $\varphi$, one can define a~non-decreasing function $\eta(\delta)$ such that for all $\eta \leq \eta (\delta)$
\begin{equation} \label{eq: norms}
	c_\varphi^- |p| \leq \ourphi(p) \leq (c^+_\varphi + \delta) |p|.
\end{equation}
By inverting it, we get a~non-decreasing function $\delta(\eta)$ that converges to zero as $\eta$ tends to zero and \eqref{eq: norms} holds for $\delta \geq \delta(\eta)$. Let us also note a~consequence of convexity of the function $p \mapsto (C|p| - k)_+$ for $C, k > 0$, namely, the inequality
	\begin{equation} \label{ineq: lhs general}
		C \indi_{C|p| > k} \cdot \frac{p}{|p|} (p - q) \geq (C|p| - k)_+ - (C|q| - k)_+.
	\end{equation}

Using the fact that $(\ourphi(p) - k)_+ \, \indi_{(c^+_\varphi + \delta)|p| > k} = (\ourphi(p) - k)_+$ and \eqref{ineq: lhs general}, we obtain the following estimate
\begin{multline*}
	G_k(u_x^{\eps, \eta}) \cdot (u_x^{\eps, \eta} - h_x^\eps) = \frac{(\ourphi(u_x^{\eps, \eta}) - k)_+}{\ourphi^2(u_x^{\eps, \eta})} f'_\eta(\ourphi(u_x^{\eps, \eta})) u_x^{\eps, \eta} \cdot (u_x^{\eps, \eta} - h_x^\eps) \\
		\overset{\eqref{ineq: lhs general}}{\geq} \frac{(\ourphi(u_x^{\eps, \eta}) - k)_+}{\ourphi^2(u_x^{\eps, \eta})} f'_\eta(\ourphi(u_x^{\eps, \eta})) \frac{|u_x^{\eps, \eta}|}{c_\varphi^+ + \delta}\left [ \left ( (c_\varphi^+ + \delta)|u_x^{\eps, \eta}| - k \right)_+ - \left( (c_\varphi^+ + \delta) |h_x^\eps| - k \right)_+ \right ].
\end{multline*}
Handling these two terms separately and using \eqref{eq: norms} repeatedly yields
\begin{multline} \label{eq: G k ineq}
	G_k(u_x^{\eps, \eta}) \cdot (u_x^{\eps, \eta} - h_x^\eps)  \\
		\geq \frac{f'_\eta(k)}{(c_\varphi^+ + \delta)^3} \underbrace{\left( c_\varphi^-|u_x^{\eps, \eta}| - k \right)_+ \left( c_\varphi^+|u_x^{\eps, \eta}| - k \right)_+ \frac{1}{|u_x^{\eps, \eta}|}}_{A_k(|u_x^{\eps, \eta}|)} - \frac{f'_\eta(\ourphi(u_x^{\eps, \eta}))}{c_\varphi^-(c_\varphi^+ + \delta)}\left( (c_\varphi^+ + \delta)|h_x^\eps| - k \right)_+  \\
		\geq \frac{f(k) - f(0) -1}{k} (c_\varphi^+ + \delta)^{-3} A_k(|u_x^{\eps, \eta}|) - \frac{f^\infty}{c_\varphi^-c_\varphi^+} \left( (c_\varphi^+ + \delta)|h_x^\eps| - k \right)_+.
\end{multline}

The last inequality essentially follows from convexity of $f$. Indeed, due to uniform convergence of $f_\eta$ to~$f$, $||f - f_\eta||_\infty \leq 1$ for sufficiently small $\eta$. Together with convexity of $f_\eta$ and the fact that $f_\eta \geq f$, it means that
\[
	\frac{f(t) - f(0) - 1}{t} \leq \frac{1}{t}(f_\eta(t) - f_\eta(0)) \leq \, f'_\eta(t) \, \leq \frac{1}{t}( f_\eta(2t) - f_\eta(t)) \leq \frac{f(2t) + 1}{t} - \frac{f(t)}{t}.
\] 
Taking limit with $t \to \infty$ yields $\lim_{t \to \infty} f'_\eta(t) \leq f^\infty$, which was used in the estimate in the second term, $f'_\eta(\ourphi(u_x)) \leq \lim_{t \to \infty} f'_\eta(t)$. On the other hand, plugging $t = k$ on the l.\,h.\,s.\ of this inequality explains the estimate in the first term. Let $r(k) := (f(k) - f(0) - 1)/k$ and observe that $r(k) > 0 $ for sufficiently large $k$ and that $\lim_{k \to \infty} r(k) = f^\infty$.

At this point, we multiply the inequality \eqref{eq: G k ineq} by $\zeta^2$ and integrate it over $I$. Since $h_x^\eps$ is integrable, it follows by the dominated convergence theorem that
\begin{equation} \label{eq: lim for h}
	\lim_{\eta \to 0} \frac{f^\infty}{c_\varphi^- c_\varphi^+} \int_I \zeta^2 \left( (c_\varphi^+ + \delta(\eta))|h_x^{\eps} | - k \right)_+ \dd \Lb^1 = \frac{f^\infty}{c_\varphi^- c_\varphi^+} \int_I \zeta^2 (c_\varphi^+|h_x^\eps| - k)_+ \dd \Lb^1.
\end{equation}
In the term involving $u_x^{\eps, \eta}$ it will not be possible to take the limit but it will suffice to estimate (recall that Lemma \ref{lem: cpt eta} established that $u^{\eps,\eta}$ converges weakly in $W^{1,2}$ to $u^\eps$)
\begin{align} \label{eq: liminf for u}
\begin{split}
	\liminf_{\eta \to 0} r(k)(c_\varphi^+ + \delta(\eta))^{-3} \int_I \zeta^2 A_k( |u_x^{\eps, \eta}|) \dd \Lb^1 &\geq r(k) (c_\varphi^+)^{-3} \, \liminf_{\eta \to 0} \int_I \zeta^2 A_k(|u_x^{\eps, \eta}|) \dd \Lb^1 \\
	&\geq r(k) (c_\varphi^+)^{-3} \int_I \zeta^2 A_k(|u_x^{\eps}|) \dd \Lb^1.
\end{split}
\end{align}
The last inequality follows from convexity of $p \mapsto A_k(|p|)$ (it is easy to check that $A_k(s)$ is convex and increasing). Indeed, since $u_x^{\eps, \eta}$ converge weakly in $L^2$ to $u_x^\eps$, lower semicontinuity follows from the standard result \cite[Chapter 8.2, Theorem 1]{evans}. Thus, we arrive at the desired inequality \eqref{eq: lhs general eta}.

We now turn to the passage with $\eps \to 0$. Plugging $G(p) = (c_\varphi^+|p| - k)_+$ in Lemma \ref{lem: G weaklystar} yields
	\[
		\lim_{\eps \to 0} \int_I \zeta^2(c_\varphi^+|h_x^\eps| - k)_+ \dd \Lb^1 = \int_I \zeta^2 \dd (c_\varphi^+|h_x| - k)_+.
	\]
As shown in Lemma \ref{lem: gamma conv eps}, $u_x^\eps \weaklystar u_x$. After applying Corollary \ref{cor-gs-lsc-zeta}, we obtain
\[
\liminf_{\eps \to 0} \int_I \zeta^2 A_k(|u_x^{\eps}|) \dd \Lb^1 \geq \int_I \zeta^2 \dd A_k(|u_x|),
\]
which finishes the proof.
\end{proof}

\begin{prop} \label{prop-rhs-eps-eta}
Let $\mathcal{R}(\eps, k)$ denote the upper limit of the r.\,h.\,s.\ of \eqref{eq: after mult} as $\eta \to 0$, i.\,e.,
\begin{equation}\label{def_R} 
\mathcal{R}(\eps, k) := \limsup_{\eta \to 0} - \int_I (\zeta^2 G_k(u_x^{\eps, \eta}))_x \cdot D F_\eta (u_x^{\eps, \eta})_x \dd \Lb^1
\end{equation} 
Then
\[
	\lim_{k \to \infty} \limsup_{\eps \to 0} \mathcal{R}(\eps, k) = 0.
\]
\end{prop}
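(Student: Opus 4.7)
The plan is to split
\begin{equation*}
-\int_I (\zeta^2 G_k(u_x^{\eps,\eta}))_x \cdot DF_\eta(u_x^{\eps,\eta})_x \dd\Lb^1 = -R_1(\eps,\eta,k) - R_2(\eps,\eta,k),
\end{equation*}
with $R_1$ collecting the $\zeta^2\,(G_k)_x\cdot(DF_\eta)_x$ contribution and $R_2$ the $2\zeta\zeta_x\, G_k\cdot(DF_\eta)_x$ contribution, and to treat each piece separately. Setting $\sigma=\ourphi(u_x^{\eps,\eta})$, the engine of the argument is that the ansatz $G_k(p)=g_k(\ourphi(p))\,p$, combined with the Euler-type identities $p\cdot D\ourphi(p)=\ourphi(p)$ and $p\cdot D^2\ourphi(p)=0$, forces drastic cancellations once $DF_\eta(u_x^{\eps,\eta})_x = D^2F_\eta(u_x^{\eps,\eta})\,u_{xx}^{\eps,\eta}$ is expanded using
\begin{equation*}
D^2 F_\eta(p) = f''_\eta(\ourphi(p))\, D\ourphi(p)\otimes D\ourphi(p) + f'_\eta(\ourphi(p))\,D^2\ourphi(p) + \tfrac{\eta}{2}\, D^2\widetilde{\widetilde{\varphi}}_\eta(p).
\end{equation*}

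A direct calculation represents the integrand of $R_1$ as
\begin{equation*}
G_k(u_x^{\eps,\eta})_x\cdot DF_\eta(u_x^{\eps,\eta})_x = (\sigma g_k)'(\sigma)\,f''_\eta(\sigma)\,\sigma_x^2 + g_k(\sigma)\,f'_\eta(\sigma)\, u_{xx}^{\eps,\eta}\cdot D^2\ourphi(u_x^{\eps,\eta})\,u_{xx}^{\eps,\eta} + \tfrac{\eta}{2}\,Q_\eta,
\end{equation*}
where $Q_\eta$ is a bilinear expression in $u_x^{\eps,\eta},u_{xx}^{\eps,\eta}$ weighted by $D^2\widetilde{\widetilde{\varphi}}_\eta(u_x^{\eps,\eta})$. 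The first two summands are pointwise non-negative, since $f'_\eta, f''_\eta\geq 0$, $D^2\ourphi\geq 0$ by convexity, and $\sigma g_k(\sigma) = (\sigma-k)_+ f'_\eta(\sigma)/\sigma$ is non-decreasing so that $(\sigma g_k)'\geq 0$. Hence $-R_1 \leq \tfrac{\eta}{2}\int_I \zeta^2 |Q_\eta|\,\dd\Lb^1$, which tends to $0$ as $\eta\to 0$ through Cauchy--Schwarz in the $D^2\widetilde{\widetilde{\varphi}}_\eta$-weighted inner product together with the dissipation estimate $\tfrac{\lambda\eta}{2}\int_I u_{xx}^{\eps,\eta}\cdot D^2\widetilde{\widetilde{\varphi}}_\eta(u_x^{\eps,\eta})\,u_{xx}^{\eps,\eta}\,\dd\Lb^1 \leq C\|h_x^\eps\|_{L^2}^2$, which follows---exactly as in \eqref{homo_eta_unif_bd}---from testing \eqref{Euler-Lagrange-eps-eta} against $u_{xx}^{\eps,\eta}$ and using that all three summands of $D^2F_\eta$ contribute non-negatively.

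The same identities collapse the integrand of $R_2$ to
\begin{equation*}
G_k(u_x^{\eps,\eta})\cdot DF_\eta(u_x^{\eps,\eta})_x = H_k(\sigma)_x + \tfrac{\eta}{2}\,\widetilde Q_\eta, \qquad H_k(\sigma):=\int_0^\sigma \tfrac{(\tau-k)_+}{\tau}\, f'_\eta(\tau)\, f''_\eta(\tau)\,\dd\tau,
\end{equation*}
and a single integration by parts converts $\int_I 2\zeta\zeta_x\, H_k(\sigma)_x\,\dd\Lb^1$ into $-\int_I (2\zeta_x^2+2\zeta\zeta_{xx})\,H_k(\sigma)\,\dd\Lb^1$. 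The crux is then the uniform-in-$\sigma$ bound
\begin{equation*}
\|H_k\|_\infty \leq \int_k^\infty f'_\eta(\tau)\,f''_\eta(\tau)\,\dd\tau = \tfrac{1}{2}\bigl((f^\infty)^2-(f'_\eta(k))^2\bigr),
\end{equation*}
following from $(\tau-k)_+/\tau\leq 1$ and $f'_\eta f''_\eta = \tfrac{1}{2}((f'_\eta)^2)'$. The $\widetilde Q_\eta$-contribution is absorbed as $Q_\eta$ before. Taking $\limsup_{\eta\to 0}$ thus leaves $\mathcal R(\eps,k)$ bounded, uniformly in $\eps$, by $C(\zeta)\,\tfrac{1}{2}((f^\infty)^2-(f'(k))^2)$, which vanishes as $k\to\infty$ since $f$ has linear growth with constant $f^\infty$.

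The principal obstacle is not conceptual but the delicate accounting of the $\eta$-remainders $Q_\eta,\widetilde Q_\eta$: $D^2\widetilde{\widetilde{\varphi}}_\eta$ need not be uniformly bounded in $\eta$, so pointwise estimates are ruled out. The workaround is to remain throughout inside the $D^2\widetilde{\widetilde{\varphi}}_\eta$-weighted inner product, since the relevant norm of $u_{xx}^{\eps,\eta}$ is directly controlled by the $\tfrac{\eta}{2}D^2\widetilde{\widetilde{\varphi}}_\eta$-contribution to the Euler--Lagrange dissipation identity.
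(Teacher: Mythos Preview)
Your overall strategy is sound and genuinely different from the paper's. The paper does \emph{not} split into $R_1$ and $R_2$; instead it keeps the whole expression $(\zeta^2 G_k)_x\cdot(DF_\eta)_x$ together (separately for the $f_\eta\circ\ourphi$ and the $\tfrac{\eta}{2}\widetilde{\widetilde{\varphi}}_\eta$ parts), and uses Young's inequality $2ab\geq -a^2-b^2$ to absorb the $2\zeta\zeta_x$ cross term into the two non-negative $\zeta^2$ terms. This produces a remainder $\zeta_x^2\,f''_\eta(\sigma)\mathcal K(\sigma)$ with the explicit bound $f''_\eta\mathcal K(\sigma)\leq C(\sigma-k)_+/k$, whence $\mathcal R(\eps,k)\leq C/k$. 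Your route---observing that $G_k\cdot D(f_\eta\circ\ourphi)_x = H_k(\sigma)_x$ is an \emph{exact} derivative and integrating by parts---is a nice alternative and yields the sharper rate $\tfrac{1}{2}\bigl((f^\infty)^2-(f'(k))^2\bigr)$, at the cost of requiring $\zeta\in C^2_c$ (the paper works with $\zeta\in C^1_c$ throughout).

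There is, however, a gap in your treatment of $Q_\eta$. The claim that $\tfrac{\eta}{2}\int_I\zeta^2|Q_\eta|\,\dd\Lb^1\to 0$ via Cauchy--Schwarz in the $D^2\widetilde{\widetilde{\varphi}}_\eta$--weighted inner product plus the dissipation bound does not close as stated: the Cauchy--Schwarz estimate on $|Q_\eta|$ contains the term $g_k(\sigma)\,|u_{xx}^{\eps,\eta}|_*^2$, and the dissipation only gives $\tfrac{\eta}{2}\int|u_{xx}^{\eps,\eta}|_*^2\leq C$, so $\tfrac{\eta}{2}\int\zeta^2 g_k|u_{xx}^{\eps,\eta}|_*^2\leq C\,\sup g_k$ is bounded but does not vanish with $\eta$. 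The fix is immediate and uses nothing beyond what you already have: on the support $\{\sigma>k\}$ one has $|u_x^{\eps,\eta}|>\eta$, hence $\widetilde{\widetilde{\varphi}}_\eta=\ourphi^{\,2}$ there, and the same Euler identities give
\[
Q_\eta \;=\; 2(\sigma g_k)'(\sigma)\,\sigma_x^2 \;+\; 2\,g_k(\sigma)\,\sigma\; u_{xx}^{\eps,\eta}\cdot D^2\ourphi(u_x^{\eps,\eta})\,u_{xx}^{\eps,\eta}\;\geq\;0,
\]
so in fact $-R_1\leq 0$ outright (no $\eta$--limit needed). Your argument for $\widetilde Q_\eta$ in $R_2$ \emph{does} close via Cauchy--Schwarz plus dissipation (there one only picks up a single factor $|u_{xx}^{\eps,\eta}|_*$, giving an $O(\sqrt\eta)$ bound), so the only correction needed is in $R_1$.
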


\begin{proof}
Throughout the proof, the superscript $\eps, \eta$ on $u_x^{\eps, \eta}$ will be omitted for readability. We compute
\begin{equation}\label{comp_gen}
\begin{aligned}
	(\zeta^2 G_k(u_x))_x &= 2 \zeta \zeta_x g_k(\ourphi(u_x)) u_x + \zeta^2 g_k'(\ourphi(u_x))(\ourphi(u_x))_x u_x + \zeta^2 g_k(\ourphi(u_x)) u_{xx},\\
    D F_\eta(u_x)_x &= (D (f_\eta \circ \ourphi)(u_x))_x + \frac{\eta}{2} D \widetilde{\widetilde{\varphi}}_\eta(u_x)_x.  
\end{aligned}
\end{equation}
We deal with the terms coming from $f_\eta \circ \ourphi$ and $\widetilde{\widetilde{\varphi}}_\eta$ separately. First, we calculate
\begin{equation*}
    (D (f_\eta \circ \ourphi)(u_x))_x =  f''_\eta(\ourphi(u_x)) \ourphi(u_x)_x D\ourphi(u_x) + f'_\eta(\ourphi(u_x)) D^2 \ourphi(u_x) \cdot u_{xx}, 
\end{equation*}
\begin{equation} \label{rhs_est_gen}
\begin{aligned}
		(\zeta^2 G_k(u_x))_x \cdot (D(f_\eta \circ \ourphi)(u_x))_x&=
	2\zeta \zeta_x g_k(\ourphi(u_x)) f''_\eta(\ourphi(u_x)) \ourphi(u_x) (\ourphi(u_x))_x \\
	&+ \zeta^2 f''_\eta(\ourphi(u_x)) (\ourphi(u_x))_x^2  \left( g_k'(\ourphi(u_x)) \ourphi(u_x)  + g_k(\ourphi(u_x)) \right)\\
	&+ \zeta^2 g_k(\ourphi(u_x)) f'_\eta(\ourphi(u_x)) u_{xx} \cdot D^2 \ourphi(u_x) \cdot u_{xx}.
\end{aligned} 
\end{equation}
Other terms vanish because $D^2 \ourphi(u_x) \cdot u_x = 0$ by homogeneity of $\ourphi$. The last term on the r.\,h.\,s.\ of \eqref{rhs_est_gen} is non-negative and we will ignore it. There holds 
\[g_k(\sigma) + g'_k(\sigma)\sigma = \sigma^{-2}\left(k f'_{\eta}(\sigma) + \sigma(\sigma - k) f''_{\eta}(\sigma)\right) \mathbb{1}_{\sigma > k} \geq 0.\]
Thus, the second term on the r.\,h.\,s.\ of \eqref{rhs_est_gen} is also non-negative and we can use to estimate the first term by means of inequality $2ab \geq -a^2 - b^2$ with suitable choices of $a, b$, obtaining
\begin{multline*} 
		(\zeta^2 G_k(u_x))_x \cdot D^2(f_\eta \circ \ourphi )(u_x)\cdot u_{xx} \\ \geq -\zeta_x^2 f''_\eta(\ourphi(u_x)) \frac{g_k^2(\ourphi(u_x)) \ourphi^2(u_x)}{g_k(\ourphi(u_x)) + g'_k(\ourphi(u_x))\ourphi(u_x)} = -\zeta_x^2 f''_\eta(\ourphi(u_x)) \mathcal{K}(\ourphi(u_x)),
\end{multline*}
where we have denoted \begin{equation} \label{K_def}
\mathcal{K}(\sigma) = \frac{g_k^2(\sigma) \sigma^2}{g_k(\sigma) + g'_k(\sigma)\sigma}= \frac{(\sigma - k)_+^2  | f'_{\eta} (\sigma) |^2}{k f'_{\eta}(\sigma) + \sigma(\sigma - k) f''_{\eta}(\sigma)}.
\end{equation}
Forgetting about the non-negative term $k f'_\eta$ in the denominator and using linear growth of $f_\eta$ allows us to conclude that
\begin{equation} \label{ke-small}
\zeta^2_x f''_\eta (\sigma) \mathcal{K}(\sigma) \leq \frac{||\zeta_x||_\infty^2}{\sigma}(\sigma - k)_+ |f'_\eta(\sigma)|^2 \leq \frac{C ||\zeta_x||_\infty^2}{k} (\sigma - k)_+.
\end{equation}

Let us now turn to the neglected terms of order $\eta/2$. Recall that outside $B_\eta(0)$ there holds $\widetilde{\widetilde{\varphi}}_\eta = \ourphi^2$ and thus
\[\frac{\eta}{2} D \widetilde{\widetilde{\varphi}}_\eta(u_x)_x = \eta \ourphi(u_x)_x D \ourphi(u_x) + \eta \ourphi(u_x) D^2\ourphi(u_x) \cdot u_{xx}.\]
Since $G_k(p)$ vanishes on $B_k(0)$, as long as $\eta < k$ we have, recalling \eqref{comp_gen}, 

\begin{align*}
	\frac{1}{2}(\zeta^2 G_k(u_x))_x \cdot D \widetilde{\widetilde{\varphi}}_\eta(u_x)_x &=
	2\zeta\zeta_x g_k(\ourphi(u_x)) \ourphi(u_x) (\ourphi(u_x))_x \\
	&+ \zeta^2 \ourphi(u_x)_x^2 \left(g_k'(\ourphi(u_x))\ourphi(u_x) + g_k(\ourphi(u_x) \right) \\
	&+ \zeta^2g_k(\ourphi(u_x)) \ourphi(u_x) u_{xx} \cdot D^2\ourphi(u_x) \cdot u_{xx} . 
\end{align*}
Observing that the last two terms on the r.\,h.\,s.\ are non-negative and estimating the first term as before, we obtain 
\begin{equation*} 
\frac{1}{2}(\zeta^2 G_k(u_x))_x \cdot D \widetilde{\widetilde{\varphi}}_\eta(u_x)_x \geq -\zeta_x^2 \mathcal{K}(\ourphi(u_x)),
\end{equation*} 
with $\mathcal{K}$ as in \eqref{K_def}. Forgetting about the term with $f''_\eta$ in the denominator of $\mathcal{K}$ and keeping in mind that $f_\eta$ is of linear growth, one gets
\begin{equation} \label{ke-big}
\zeta^2_x \mathcal{K}(\sigma) \leq \frac{C ||\zeta_x||_\infty^2}{k} (\sigma - k)_+^2.
\end{equation}

Using estimates \eqref{ke-big} and \eqref{ke-small} and careful multiplying by $-1$ lead to the conclusion that
\begin{equation*}
- \int_I (\zeta^2 G_k(u_x))_x \cdot D F_\eta (u_x)_x \dd \Lb^1 \leq \frac{C||\zeta_x||_\infty^2}{k} \left(  \int_I (\ourphi(u_x) - k)_+ \dd \Lb^1 + \eta  \int_I (\ourphi(u_x) - k)_+^2 \dd \Lb^1 \right).
\end{equation*}

Eventually, recall that if $\eps$ is fixed, then $\left \{ \int_I (\ourphi(u_x^{\eps, \eta}) - k)_+^2 \dd \Lb^1 \right \}$ is bounded w.\,r.\,t.\ $\eta$ (which was essentially proved in Lemma \ref{lem: cpt eta}), hence the second term vanishes when $\eta$ converges to zero. Lastly, the fact that 
\[
\E_{\eps, \eta}(u^{\eps, \eta}) \leq \int_I |h^\eps|^2 \dd \Lb^1 \leq \int_I |h|^2 \dd \Lb^1
\]
and linear growth of $f$ imply that $\left \{\int_I (\ourphi(u_x^{\eps, \eta})  - k)_+ \dd \Lb^1 \right\}$ is bounded (w.\,r.\,t.\ $\eta$ and $\eps$).
\end{proof}

\begin{proof}[Proof of Theorem \ref{thm:constants}]
	Recalling \eqref{eq: after mult} and \eqref{def_R}, Proposition \ref{prop: general lhs eps} imply that the minimizer $u$ of $\E$ satisfies
	\begin{equation} \label{eq:constants final}
		\frac{r(k)}{ (c_\varphi^+)^{3}} \int_I \zeta^2 \dd (c_\varphi^- |u_x| - k)_+(c_\varphi^+ |u_x| - k)_+ |u_x|^{-1} \leq  \frac{f^\infty}{ c_\varphi^- c_\varphi^+} \int_I \zeta^2 \dd (c_\varphi^+ |h_x| - k)_+ + \lambda \limsup_{\eps \to 0} \mathcal{R}(\eps, k),
	\end{equation}
where $\lim_{k \to \infty} r(k) = f^\infty$. Let us now pass with $k \to \infty$. By definition of measure $(c_\varphi^+|h_x| - k)_+$,
\[
	\int_I \zeta^2 \dd (c_\varphi^+|h_x| - k)_+ = \int_I \zeta^2 (c_\varphi^+|h_x^{ac}| - k)_+ \dd \Lb^1 + c_\varphi^+ \int_I \zeta^2 \dd |h_x^s|. 
\]
As $\zeta^2 |h_x^{ac}|$ is an integrable function, we can use the dominated convergence theorem to pass with the limit in the first integral on the r.\,h.\,s.\ and see that it vanishes. Therefore, by Proposition \ref{prop-rhs-eps-eta}, the r.\,h.\,s.\ of \eqref{eq:constants final} converges to $f^\infty(c_\varphi^-)^{-1} \int_I \zeta^2 \dd |h_x^s|$. Similarly, we have 
\begin{multline*}\int_I \zeta^2 \dd (c_\varphi^- |u_x| - k)_+(c_\varphi^+ |u_x| - k)_+ |u_x|^{-1} = \int_I \zeta^2 (c_\varphi^- |u_x^{ac}| - k)_+(c_\varphi^+ |u_x^{ac}| - k)_+ |u_x^{ac}|^{-1} \dd \Lb^1\\ + c_\varphi^- c_\varphi^+ \int_I \zeta^2 \dd |u_x^s|.
\end{multline*}
Thus, limit passage on the l.\,h.\,s.\ of \eqref{eq:constants final} can be performed in analogous manner, resulting in
\begin{equation} \label{eq:constants final2}
	\int_I \zeta^2 \dd |u_x^s| \leq \left ( \frac{c_\varphi^+}{c_\varphi^-} \right)^2 \int_I \zeta^2 \dd |h_x^s|.
\end{equation}
To finish the proof, it suffices to apply Lemma \ref{lem: mu leq nu} to $\mu = |u_x^s|$ and $\nu = (c_\varphi^+)^2(c_\varphi^-)^{-2}|h_x^s|$.
\end{proof}

\section{The regular case} \label{sec:regular} 

Firstly, let us note that under assumptions of Theorem~\ref{thm:regular norm}, $F$ is differentiable in $\R^n$. Additionally, any differentiable convex function on $\R^n$ is in fact continuously differentiable (\cite[Corollary 25.5.1]{rockafellar}). Since $\R^n \setminus \{0\}$ can be easily covered with a~finite number of open convex sets, this fact remains true if one replaces $\R^n$ with $\R^n \setminus \{0\}$, which means that $\varphi \in C^1(\R^n \setminus \{0\})$. In view of Lemma \ref{lem: strict convexity}, $F$ is strictly convex and the assumption of strict convexity in the sense of Reshetnyak on $\varphi$ is necessary for $F$ to be strictly convex.

We define $\F_M \colon M(I, \R^n) \to [0, \infty[$ by 
\[\F_M(\mu) = F(\mu)(I).\]
Clearly, we have $\F(w) = \F_M(w_x)$ for $w \in BV(I, \R^n)$.  

\begin{prop} \cite[Theorem 2.4]{anzellotti2} \label{thm:anzellotti}
	Functional $\F_M$ is differentiable at $\alpha \in M(I, \R^n)$ in direction $\beta \in M(I, \R^n)$ if and only if $|\beta^s|$ is absolutely continuous w.\,r.\,t.\ $|\alpha^s|$, and the derivative is of the form
\begin{equation*}
	\left.\frac{\dd}{\dd t}\right|_{t=0}\F_M(\alpha + t \beta) = \int_I DF(\alpha^{ac}) \cdot \beta^{ac} \dd \Lb^1 + \int_I DF^{\infty} \left( \frac{\alpha^s}{|\alpha^s|} \right) \cdot \dd \beta^s,
\end{equation*}
where $F^\infty(p) = \lim_{t \to \infty} \tfrac{1}{t}F(tp)$.
\end{prop}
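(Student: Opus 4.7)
The plan is to compute $\F_M(\alpha+t\beta)$ exactly as a sum of three terms coming from a simultaneous Lebesgue decomposition, then differentiate each summand at $t=0$. First, I would apply the Lebesgue--Radon--Nikodym decomposition of $\beta^s$ relative to $|\alpha^s|$, writing $\beta^s = \gamma\,|\alpha^s| + \beta^\perp$ with $\gamma\in L^1(|\alpha^s|;\R^n)$ and $\beta^\perp\perp|\alpha^s|$. Setting $\sigma:=\alpha^s/|\alpha^s|$ and $\sigma^\perp:=\beta^\perp/|\beta^\perp|$, the Lebesgue decomposition of $\alpha+t\beta$ with respect to $\Lb^1$ reads
\begin{equation*}
(\alpha+t\beta)^{ac} = \alpha^{ac}+t\beta^{ac}, \qquad (\alpha+t\beta)^s = (\sigma+t\gamma)|\alpha^s| + t\beta^\perp,
\end{equation*}
with the two measures on the right-hand side of the second identity being mutually singular because $|\alpha^s|$ and $|\beta^\perp|$ are. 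Substituting into \eqref{Fmu} and using $1$-homogeneity of $F^\infty$ produces the closed-form expression
\begin{equation*}
\F_M(\alpha+t\beta) = \int_I F(\alpha^{ac}+t\beta^{ac})\,\dd\Lb^1 + \int_I F^\infty(\sigma+t\gamma)\,\dd|\alpha^s| + |t|\int_I F^\infty(\sgn(t)\,\sigma^\perp)\,\dd|\beta^\perp|.
\end{equation*}

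Second, I would differentiate the three summands separately at $t=0$. For the first, linear growth of $F$ yields the Lipschitz estimate $|F(p+q)-F(p)|\leq C_F^+|q|$, which dominates the difference quotient by $C_F^+|\beta^{ac}|\in L^1(\Lb^1)$; dominated convergence produces $\int DF(\alpha^{ac})\cdot\beta^{ac}\,\dd\Lb^1$. For the second, the fact that $F^\infty$ is $1$-homogeneous, convex, and of linear growth makes it globally Lipschitz with some constant $L$, so the difference quotient is dominated by $L|\gamma|\in L^1(|\alpha^s|)$. Since $|\sigma|=1$ $|\alpha^s|$-a.e., pointwise convergence to $DF^\infty(\sigma)\cdot\gamma$ holds $|\alpha^s|$-a.e.\ as soon as $F^\infty$ is differentiable off the origin, which follows from the standing assumption $\varphi\in C^1(\R^n\setminus\{0\})$ via $F^\infty=f^\infty\varphi$. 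A second application of dominated convergence yields $\int DF^\infty(\sigma)\cdot\gamma\,\dd|\alpha^s|$.

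Third, I would analyse the remaining summand $|t|\,\kappa(\sgn t)$ with $\kappa(\pm 1)=\int F^\infty(\pm\sigma^\perp)\,\dd|\beta^\perp|\geq 0$: its right- and left-derivatives at $0$ equal $\kappa(+1)$ and $-\kappa(-1)$ respectively, so two-sided differentiability is equivalent to $\int[F^\infty(\sigma^\perp)+F^\infty(-\sigma^\perp)]\,\dd|\beta^\perp|=0$. Coercivity of $\varphi$ together with positivity of $f^\infty$ forces the integrand to be strictly positive on $\supp|\beta^\perp|$, so the identity holds exactly when $|\beta^\perp|=0$, i.e.\ when $\beta^s\ll|\alpha^s|$. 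In that regime $\gamma|\alpha^s|=\beta^s$ as vector measures, so $\int DF^\infty(\sigma)\cdot\gamma\,\dd|\alpha^s|=\int DF^\infty(\sigma)\cdot\dd\beta^s$, and adding the derivative of the first summand yields the stated formula.

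The main obstacle I anticipate is the dominated convergence step for the middle summand: since $\gamma$ lies only in $L^1(|\alpha^s|)$ and not in $L^\infty$, no pointwise lower bound on $|\sigma+t\gamma|$ can be enforced uniformly in the integration variable, so I must route the estimate through a global Lipschitz bound on $F^\infty$ (rather than any local bound near $\sigma$) and invoke pointwise $C^1$-regularity of $F^\infty$ only on the full-measure set $\{\sigma\neq 0\}$. Once this technicality is handled, the Lebesgue decomposition step and the "$|t|$-versus-$t$" cancellation identifying $\beta^\perp=0$ as the exact obstruction to differentiability are essentially formal.
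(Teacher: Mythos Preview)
The paper does not supply a proof of this proposition; it is quoted as \cite[Theorem 2.4]{anzellotti2} and used as a black box. Your argument is a correct self-contained reconstruction of the result under the standing hypotheses of Section~\ref{sec:regular}, where $F=f\circ\varphi$ with $\varphi\in C^1(\R^n\setminus\{0\})$ coercive and $f^\infty>0$, so that $F^\infty=f^\infty\varphi$ is $C^1$ away from the origin and strictly positive on the unit sphere. The three-term decomposition via the Radon--Nikodym splitting $\beta^s=\gamma\,|\alpha^s|+\beta^\perp$ is exactly the right organizing device, and your handling of each piece (Lipschitz bound from linear growth for dominated convergence, the $|t|$-versus-$t$ obstruction identifying $|\beta^\perp|=0$ as the differentiability criterion) is sound.

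One remark: your proof, as written, invokes the specific structure $F^\infty=f^\infty\varphi$ both for the pointwise differentiability of $F^\infty$ on $\{\sigma\neq 0\}$ and for the strict positivity $F^\infty(\sigma^\perp)+F^\infty(-\sigma^\perp)>0$. This is entirely adequate for the paper's needs, but it means you have proved the proposition only under the assumptions of Theorem~\ref{thm:regular norm}, not Anzellotti's result in its original generality (which covers more general integrands). Since the proposition is only applied within Section~\ref{sec:regular}, this limitation is immaterial here.
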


By $W_0^{1,2}(I, \R^n)$ we denote the space of $W^{1,2}(I, \R^n)$ functions that vanish at $\partial I$. In this Section, we also chose $F_\eta$ to be defined as in \eqref{Feta2def}.
\begin{lemma} \label{lem: vanish at endpoints}
	The following statements hold:
	\begin{enumerate}[(i)]
		\item $DF_\eta(u_x^{\eps, \eta}) \in W^{1,2}_0(I, \R^n)$ and $\lambda \left( DF_\eta(u_x^{\eps, \eta}) \right)_x = u^{\eps, \eta} - h^\eps$ a.\,e.\ in $I$; 
		\item $DF(u_x^{\eps}) \in W^{1,2}_0(I, \R^n)$ and $\lambda \left( DF(u_x^{\eps}) \right)_x = u^\eps - h^\eps$ a.\,e.\ in $I$;
		\item $DF(u_x^{ac}) \in W^{1,2}_0(I, \R^n)$ and $\lambda \left( DF (u_x^{ac}) \right)_x = u - h$ a.\,e.\ in $I$.
	\end{enumerate}
\end{lemma}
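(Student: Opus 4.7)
For (i), since $F_\eta$ is smooth and $u^{\eps,\eta} \in W^{2,2}(I, \R^n) \subset C^1(\bar I, \R^n)$, the composition $DF_\eta(u_x^{\eps,\eta})$ is continuous on $\bar I$, hence in $L^2$. The strong Euler--Lagrange identity \eqref{Euler-Lagrange-eps-eta} then identifies its weak derivative with $\lambda^{-1}(u^{\eps,\eta} - h^\eps) \in L^2$, placing $DF_\eta(u_x^{\eps,\eta})$ in $W^{1,2}(I, \R^n)$. Vanishing at $\partial I$ is the natural Neumann-type condition: one obtains it either by testing the weak form of \eqref{Euler-Lagrange-eps-eta} against $\phi \in W^{1,2}(I, \R^n)$ with arbitrary boundary values and integrating by parts, or by invoking $u_x^{\eps,\eta}|_{\partial I} = 0$ together with $DF_\eta(0) = 0$, the latter holding because $f_\eta$ in \eqref{eq: f eta} is constant on $[-\eta/2, \eta/2]$.

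For (ii), the plan is to derive the weak Euler--Lagrange equation for $u^\eps$ directly on $W^{1,2}(I, \R^n)$. Under the assumptions of Theorem \ref{thm:regular norm}, the preamble of this section gives $F \in C^1(\R^n)$, and the linear growth of $F$ yields $DF \in L^\infty(\R^n)$. By dominated convergence, the map $w \mapsto \int_I F(w_x)\,\dd\Lb^1$ is Gâteaux differentiable on $W^{1,2}(I, \R^n)$ with derivative $\phi \mapsto \int_I DF(w_x) \cdot \phi_x\,\dd\Lb^1$. Since $u^\eps \in W^{1,2}(I, \R^n)$ by Lemma \ref{lem: cpt eta} minimizes $\E_\eps$ on $L^2$ and hence a fortiori on $W^{1,2}$, the first-order condition
\[
\lambda \int_I DF(u_x^\eps) \cdot \phi_x\,\dd\Lb^1 + \int_I (u^\eps - h^\eps) \cdot \phi\,\dd\Lb^1 = 0
\]
holds for every $\phi \in W^{1,2}(I, \R^n)$. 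Specialising to $\phi \in C_c^\infty$ gives the equation a.\,e.\ and hence $DF(u_x^\eps) \in W^{1,2}$, and letting $\phi$ range over functions with arbitrary boundary values forces $DF(u_x^\eps) = 0$ on $\partial I$.

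For (iii), the same strategy works once elementary Gâteaux differentiation is replaced by Anzellotti's formula (Proposition \ref{thm:anzellotti}). For $\phi \in W^{1,2}(I, \R^n)$, the measure $\phi_x$ has no singular part, so the hypothesis $|\phi_x^s| \ll |u_x^s|$ is vacuous in both directions $\pm\phi$, and the proposition yields the two-sided directional derivative
\[
\left.\tfrac{\dd}{\dd t}\right|_{t=0} \F(u + t\phi) = \int_I DF(u_x^{ac}) \cdot \phi_x\,\dd\Lb^1.
\]
Combining this with the derivative of the fidelity term and minimality of $u$ on $L^2$ gives, for every $\phi \in W^{1,2}(I, \R^n)$,
\[
\lambda \int_I DF(u_x^{ac}) \cdot \phi_x\,\dd\Lb^1 + \int_I (u - h) \cdot \phi\,\dd\Lb^1 = 0,
\]
from which the PDE and the Neumann condition for $DF(u_x^{ac})$ follow exactly as in (ii). The conceptual point I expect to require the most care is that only the absolutely continuous part $u_x^{ac}$ enters this optimality condition: this is precisely what allows us to avoid a delicate limit passage $\eps \to 0$ from (ii), together with a spherical-compactification identification of the limit of $DF(u_x^\eps)$, since Anzellotti's singular term vanishes for absolutely continuous test perturbations.
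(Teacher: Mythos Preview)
Your argument is correct and follows the same overall route as the paper: for (iii) you invoke Anzellotti's directional derivative formula (Proposition~\ref{thm:anzellotti}) with absolutely continuous perturbations, obtain the weak Euler--Lagrange identity, read off the PDE from $C_c^\infty$ tests, and deduce the boundary condition from tests with nontrivial boundary values. The paper does the same, differing only cosmetically in the last step: rather than integrating by parts against a generic $\phi\in W^{1,2}$, it inserts the explicit piecewise-linear tests $\xi^\ell = (-\ell x + 1 + a\ell)\indi_{[a,a+1/\ell]}$ and passes to the limit $\ell\to\infty$ to isolate $DF(u_x^{ac})(a)$. Your closing remark that Anzellotti's formula is what lets one bypass a limit passage from (ii) is exactly the point.

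One small caution on your alternative route in (i): the claim $DF_\eta(0)=0$ is not fully justified by ``$f_\eta$ is constant on $[-\eta/2,\eta/2]$'' alone, since in this section $F_\eta$ is taken as in \eqref{Feta2def} and carries the additional term $\tfrac{\eta}{2}\widetilde{\widetilde{\varphi}}_\eta$, whose derivative at the origin is not addressed by that observation. This does not affect your proof, because your primary method---testing the first-order optimality condition against all of $W^{1,2}$---yields $DF_\eta(u_x^{\eps,\eta})=0$ on $\partial I$ directly.
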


\begin{proof}
We will show only proof of (iii) as it is the most interesting one. Statements (i) and (ii) are proved in the same way only without invoking the additional Proposition \ref{thm:anzellotti}.

Firstly, observe that since $DF$ is bounded, the function $DF(u_x^{ac}) \in L^2(I, \R^n)$. Secondly, applying Proposition \ref{thm:anzellotti} allows us to write the derivative of $\F_M$ at point $u_x^{ac}$ in direction $\xi \in C^\infty_c(I, \R^n)$ as 
\begin{equation*}
	\int_I \xi_x DF(u_x^{ac})  \dd \Lb^1,
\end{equation*}
since the integral with respect to the singular part vanishes. As $u$ is the minimizer of $\E$, it satisfies the Euler-Lagrange equation
\begin{equation*}
	- \lambda  \int_I \xi_x DF(u_x^{ac}) \dd \Lb^1 = \int_I \xi (u - h) \dd \Lb^1.
\end{equation*}
This means that the distributional derivative of $DF(u_x^{ac})$ coincides with $(u - h)/\lambda \in L^2(I, \R^n)$. Consequently, $DF(u_x^{ac}) \in W^{1,2}(I, \R^n)$, which implies also that there exists a~continuous representative of this function.
	
	We can show that any continuous representative equals zero at the endpoints of the interval by the following standard argument. Let $\xi^\ell(x) = (-\ell x + 1 + a \ell) \cdot \indi_{[a, a + 1/\ell]}$ be a~piecewise linear function equal $1$ for $x = a$ and $0$ on $[a, a + 1/\ell]$. Once again the Euler--Lagrange equation given by Proposition \ref{thm:anzellotti} yields 
\begin{equation*}
	-\lambda \int_I \xi^\ell_x DF(u_x^{ac})  \dd \Lb^1 = \int_I \xi^\ell (u - h)  \dd \Lb^1.
\end{equation*}
As $\xi^\ell_x = -\ell \indi_{[a, a + 1/\ell]}$, continuity of $DF(u_x^{ac})$ implies that the l.\,h.\,s. converges to $DF(u_x^{ac}(a))$ as $\ell \to \infty$. Moreover, the dominated convergence theorem enables us to pass with the limit under the integral sign in order to get zero on the r.\,h.\,s, which implies that $DF(u_x^{ac}(a)) = 0$. To get $DF(u_x^{ac}(b)) = 0$, one needs to repeat this argument with $\xi^\ell = (\ell x + 1 -\ell b) \cdot \indi_{[b-1/\ell, b]}$.
\end{proof}

Observe that for fixed representatives of $u^{\eps, \eta}$ and $DF_\eta(u_x^{\eps, \eta})$ it is true that almost everywhere on $I$, $DF_\eta(u_x^{\eps, \eta})$ is an actual composition of $DF$ and $u_x^{\eps, \eta}$, i.\,e., $DF_\eta(u_x^{\eps, \eta})(x_0) = DF_\eta(u_x^{\eps, \eta}(x_0))$. The same holds for analogous pairs of mappings from Lemma \ref{lem: vanish at endpoints}.

\subsection*{Limit passage with $\eta$}

Owing to the additional regularity of $F$, it is possible to perform a~more subtle limit passage with $\eta \to 0$ on the l.\,h.\,s.\ than in the previous case.

\begin{lemma} \label{lem: a.e. conv eta}
There exists a sequence $\eta_j$ with $\eta_j \to 0$ as $j \to \infty$ such that $u_x^{\eps, \eta_j}$ converges to $u_x^\eps$ a.\,e.\ on $I$.
\end{lemma}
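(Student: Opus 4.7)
The plan is to upgrade the $L^2$ convergence of $u^{\eps,\eta}$ to uniform convergence of $DF_\eta(u_x^{\eps,\eta})$ on $\bar I$ via the Euler--Lagrange equations, and then to invert at the level of $DF_\eta$ using the spherical compactification of Section~\ref{sec: prelim}.

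First, I would invoke Lemma~\ref{lem: cpt eta} to get $u^{\eps,\eta}\to u^\eps$ strongly in $L^2(I,\R^n)$. Subtracting the Euler--Lagrange identities from Lemma~\ref{lem: vanish at endpoints}(i) and (ii) yields $\bigl(DF_\eta(u_x^{\eps,\eta}) - DF(u_x^\eps)\bigr)_x = (u^{\eps,\eta}-u^\eps)/\lambda \to 0$ in $L^2(I,\R^n)$. Since both primitives lie in $W^{1,2}_0(I,\R^n)$, the Poincaré inequality combined with the one-dimensional Sobolev embedding $W^{1,2}\hookrightarrow C(\bar I)$ will promote this to uniform convergence of continuous representatives on $\bar I$. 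Because $u^{\eps,\eta}\in C^1(\bar I)$, any fixed sequence $\eta_j\downarrow 0$ then satisfies, for a.e.\ $x\in I$,
\[
DF_{\eta_j}(p_j) \to DF(q), \quad p_j := u_x^{\eps,\eta_j}(x),\ q := u_x^\eps(x).
\]

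Next, I would argue point-by-point in the spherical compactification. Fix such $x$. Any subsequence of $\{p_j\}$ admits a further one with $\Phi(p_{j_k})\to \Phi(p^\ast)$ for some $p^\ast\in\overline{\R^n}$; the aim is to show $p^\ast=q$. If $p^\ast\in\R^n$, then $p_{j_k}\to p^\ast$ in the Euclidean sense, and on the resulting bounded set Proposition~\ref{goodnorm}(iv) applied to $D(f_\eta\circ\ourphi)$ together with uniform vanishing of $\tfrac{\eta}{2}D\widetilde{\widetilde{\varphi}}_\eta$ on bounded sets yields $DF_{\eta_{j_k}}\to DF$ uniformly; thus $DF(p^\ast)=DF(q)$, and injectivity of $DF$ (Lemma~\ref{lem: DF homeo}) forces $p^\ast=q$. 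The main obstacle is excluding $p^\ast = \infty\omega\in\mathbb{S}^{n-1}_\infty$: using
\[
DF_\eta(p) = \bigl(f'_\eta(\ourphi(p)) + \eta\,\ourphi(p)\bigr)D\ourphi(p) \quad \text{for } |p|>\eta,
\]
together with the uniform lower bound for $|D\ourphi|$ on $\mathbb{S}^{n-1}$ (from Proposition~\ref{goodnorm}(iii)) and boundedness of $\{DF_{\eta_j}(p_j)\}$, one sees that $\alpha_j := \eta_j\,\ourphi(p_j)$ stays bounded, so $\alpha_j\to \alpha\geq 0$ along a further subsequence. Proposition~\ref{goodnorm} then gives $DF_{\eta_{j_k}}(p_{j_k})\to (f^\infty+\alpha)D\varphi(\omega)$, which must equal $DF(q)$. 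The case $q=0$ is ruled out by $f^\infty+\alpha>0$, $D\varphi(\omega)\neq 0$ and $f'(0)=0$; for $q\neq 0$, applying $\varphi^\ast$ together with \eqref{dual norm derivative} reduces the identity to $f'(\varphi(q)) = f^\infty+\alpha\geq f^\infty$, contradicting strict convexity of $f$, which gives $f'(t)<f^\infty$ for every finite $t$. Hence $p^\ast\in\R^n$ and $p_j\to q$ in the Euclidean sense, establishing a.e.\ convergence. The auxiliary term $\tfrac{\eta}{2}\widetilde{\widetilde{\varphi}}_\eta$, which was added precisely to make $F_\eta$ strictly convex, is what prevents a direct application of an extended homeomorphism $\overline{DF_\eta}$ and forces this detour through the strict convexity of $f$.
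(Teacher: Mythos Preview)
Your argument is correct and genuinely different from the paper's. The paper, after obtaining the same uniform convergence $DF_\eta(u_x^{\eps,\eta})\to DF(u_x^\eps)$, does not work pointwise in $\overline{\R^n}$. Instead it proves the integral estimate
\[
\int_I \bigl| DF_{\eta}(u_x^{\eps,\eta}) - DF(u_x^{\eps,\eta}) \bigr|\,\dd\Lb^1 \to 0
\]
(using Proposition~\ref{goodnorm}(iv) and the $L^1$-boundedness of $\ourphi(u_x^{\eps,\eta})$), extracts a subsequence along which this holds a.e., and then simply inverts the single continuous map $DF$ via Brouwer's invariance of domain. Your route bypasses this $L^1$ bridging step entirely: you take compactness in $\overline{\R^n}$ and rule out escape to $\mathbb S^{n-1}_\infty$ by a direct computation of the limit of $DF_{\eta_j}(p_j)$, essentially reproving the injectivity part of Lemma~\ref{lem: DF homeo} with the extra quadratic contribution $\alpha = \lim \eta_j\,\ourphi(p_j)$ carried along. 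The payoff of your approach is that it works for \emph{every} sequence $\eta_j\downarrow 0$, not just an extracted one; the cost is that you must engage with the specific algebraic structure of $F_\eta$ at infinity (the formula $DF_\eta(p)=(f'_\eta(\ourphi(p))+\eta\,\ourphi(p))D\ourphi(p)$, the lower bound on $|D\ourphi|$, and the $\varphi^*$-trick), whereas the paper's argument is more agnostic about the shape of the regularization once the $L^1$ closeness is in hand.
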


\begin{proof}
To begin with, we will show that the sequence $DF_{\eta}( u_x^{\eps, \eta})$ converges to $DF(u_x^\eps)$ in $C( \overline{I}, \R^n)$. Due to the choice of $h^\eps$ and properties established in Lemma \ref{lem: cpt eta}, functions $ u^{\eps, \eta} - h^\eps$ converge in $L^2$ to $u^{\eps}-h^{\eps}$. In light of Lemma \ref{lem: vanish at endpoints}, it is then evident that $(DF_{\eta}(u_x^{\eps, \eta_j}))_x$ converge to $(DF(u_x^{\eps}))_x$ in $L^2$ as well. For every $x_0 \in \overline{I}$, due to the boundary condition established in the same lemma, 
\begin{equation*}
	DF_{\eta} (u_x^{\eps, \eta})(x_0) = \int_a^{x_0} \left(DF_{\eta} (u_x^{\eps, \eta}) \right)_x \dd \Lb^1 \quad \text{and} \quad DF (u_x^{\eps})(x_0) = \int_a^{x_0} \left(DF(u_x^{\eps}) \right)_x \dd \Lb^1.
\end{equation*}
The $L^2$ convergence of the integrands allows to conclude that
\begin{equation} \label{eq: a.e. conv eta}
	\sup_{x_0 \in \overline{I}} \left \lvert DF_{\eta}( u_x^{\eps, \eta}) (x_0) - DF(u_x^{\eps})(x_0) \right \rvert \to 0.
\end{equation}

Secondly, let us observe that
\begin{align} \label{eq: l1 convergence}
\begin{split}
	\int_I \left| DF_{\eta} (u_x^{\eps, \eta}) - DF(u_x^{\eps, \eta}) \right| \dd \Lb^1 &\leq \int_I \left| D(f_{\eta} \circ \ourphi)(u_x^{\eps, \eta}) - D(f \circ \varphi)(u_x^{\eps, \eta}) \right | \dd \Lb^1 +\\
	&+ \frac{\eta}{2} \int_I |D \widetilde{\widetilde{\varphi}}_{\eta} (u_x^{\eps, \eta})| \dd \Lb^1.
\end{split}
\end{align}
By Proposition \ref{goodnorm} (iv), we can see that the first integral converges to zero. The second one (up to a negligible constant coming from the fact that $\widetilde{\widetilde{\varphi}}_\eta$ coincides with $\widetilde{\varphi}_{\eta}$ outside a small ball) does not exceed
\[
	\eta \int_I \ourphi(u_x^{\eps, \eta}) |D\ourphi(u_x^{\eps, \eta}) | \dd \Lb^1.
\]
As $|D\ourphi|$ is a~bounded function and $\int_I \ourphi(u_x^{\eps, \eta}) \dd \Lb^1$ is bounded w.\,r.\,t.\ $\eta$ (see proof of Proposition \ref{prop-rhs-eps-eta}), we conclude that \eqref{eq: l1 convergence} converges to zero. This implies the~existence of a~subsequence $\eta_j$ with $\eta_j \to 0$ as $j \to \infty$ for which
\begin{equation*}
	\lim_{j \to \infty} \left| DF_{\eta_{j}} (u_x^{\eps, \eta_j}) - DF(u_x^{\eps, \eta_j}) \right| = 0 \text{ a.\,e.\ on } I.
\end{equation*}
Consequently, using \eqref{eq: a.e. conv eta} yields
\begin{equation*}
	\lim_{j \to \infty} \left| DF (u_x^{\eps, \eta_j}) - DF(u_x^{\eps}) \right| = 0 \text{ a.\,e.\ on } I.
\end{equation*}

As $F$ is strictly convex and $C^1$, $DF$ is strictly monotone and continuous. Strict monotonicity of $DF$ implies that the map is injective and hence, by Brouwer's invariance of domain theorem, there exists its continuous inverse $DF^{-1}$. Therefore, at any point in which $DF(u_x^{\eps, \eta_j})(x_0) \to DF(u_x^{\eps})(x_0)$ and the mappings $DF(u_x^{\eps, \eta_j})$ and $DF(u_x^{\eps})$ are actual compositions,
we have
\begin{equation*}
	u_x^{\eps, \eta_j}(x_0) = DF^{-1} \left( DF(u_x^{\eps, \eta_j}(x_0)) \right) \to DF^{-1} \left( DF(u_x^{\eps} (x_0)) \right) = u_x^{\eps}(x_0).
\end{equation*}
Since the set of such points is of full measure, we have established the desired a.\,e.\ convergence of $u_x^{\eps, \eta_j}$ to $u_x^\eps$.

\end{proof}

\begin{prop} \label{prop-lhs-eps-eta}
	There exists a sequence $\eta_j$ with $\eta_j \to 0$ as $j \to \infty$ for which the l.\,h.\,s.\ of \eqref{eq: after mult} satisfies
	\begin{equation*}
		\lim_{j \to \infty} \int_I \zeta^2 G_k(u_x^{\eps, \eta_j}) \cdot  (u_x^{\eps, \eta_j} - h_x^{\eps}) \dd \Lb^1
		= \int_I \zeta^2 \frac{(\varphi(u_x^\eps) - k)_+}{\varphi^2(u_x^{\eps})} f'(\varphi(u_x^\eps)) (u_x^{\eps} - h_x^{\eps}) \cdot u_x^\eps \dd \Lb^1.
	\end{equation*}
\end{prop}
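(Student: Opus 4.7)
The plan is to pass to the limit by a Vitali-type argument based on the a.\,e.\ pointwise convergence of $u_x^{\eps,\eta_j}$ furnished by Lemma \ref{lem: a.e. conv eta}. First, I would extract the subsequence $\{\eta_j\}$ from that lemma so that $u_x^{\eps,\eta_j}(x) \to u_x^\eps(x)$ for a.\,e.\ $x \in I$.

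Next, I would verify pointwise a.\,e.\ convergence of the integrand. Writing $G_k(p) = g_k(\ourphi(p))\,p$ with $g_k(\sigma) = (\sigma-k)_+\sigma^{-2} f'_\eta(\sigma)$, note that $g_k$ vanishes identically on $[0,k]$ and hence extends continuously to $[0,\infty[$ with $g_k(0) = 0$; in particular $p \mapsto G_k(p)$ is continuous on $\R^n$. Since $\ourphi \to \varphi$ locally uniformly by Proposition \ref{goodnorm}(ii), and $f'_\eta \to f'$ uniformly as observed in the proof of Proposition \ref{goodnorm}(iv), the map $p \mapsto g_k(\ourphi(p))$ converges locally uniformly to $p \mapsto (\varphi(p)-k)_+\varphi^{-2}(p)\,f'(\varphi(p))$. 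Composing with the a.\,e.\ convergent sequence $u_x^{\eps,\eta_j}$ and multiplying by the a.\,e.\ convergent factor $u_x^{\eps,\eta_j} - h_x^\eps$ yields
\[
\zeta^2\, G_k(u_x^{\eps,\eta_j})\cdot(u_x^{\eps,\eta_j}-h_x^\eps) \;\longrightarrow\; \zeta^2\, \frac{(\varphi(u_x^\eps)-k)_+}{\varphi^2(u_x^\eps)}\, f'(\varphi(u_x^\eps))\, u_x^\eps \cdot (u_x^\eps - h_x^\eps) \quad \text{a.\,e.\ on } I,
\]
where the right-hand side is understood to vanish on $\{u_x^\eps = 0\}$, consistently with the fact that $G_k$ vanishes on a neighborhood of the origin.

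To pass to the limit under the integral, I would establish equi-integrability of the family, since no $\eta$-uniform pointwise dominator is available. Using the coercivity bound $\ourphi(p) \geq c_\varphi^-|p|$ and the uniform bound $f'_\eta \leq f^\infty + 1$ (valid for small $\eta$, as noted in the proof of Proposition \ref{prop: general lhs eps}), one gets the $\eta$-independent estimate
\[
|G_k(u_x^{\eps,\eta})| \;\leq\; \frac{f'_\eta(\ourphi(u_x^{\eps,\eta}))}{\ourphi(u_x^{\eps,\eta})}\,|u_x^{\eps,\eta}| \;\leq\; \frac{f^\infty + 1}{c_\varphi^-},
\]
so the integrand is dominated in absolute value by $C\,\|\zeta\|_\infty^2\,(|u_x^{\eps,\eta_j}| + |h_x^\eps|)$. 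By Lemma \ref{lem: cpt eta} the family $\{u_x^{\eps,\eta_j}\}$ is bounded in $L^2(I)$, which on the bounded interval $I$ forces equi-integrability; combined with the fixed integrable term $|h_x^\eps|$, the integrands themselves are equi-integrable. Vitali's convergence theorem then delivers the claimed limit.

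The main obstacle is precisely the lack of a fixed $L^1$ majorant: any pointwise bound on the integrand involves $|u_x^{\eps,\eta}|$, which depends on $\eta$, so Lebesgue's dominated convergence does not apply directly. The detour through Vitali's theorem is what converts the $L^2$ control from Lemma \ref{lem: cpt eta} into the equi-integrability needed to complement the a.\,e.\ convergence from Lemma \ref{lem: a.e. conv eta}.
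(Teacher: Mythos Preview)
Your proof is correct and follows essentially the same strategy as the paper: extract the a.\,e.\ convergent subsequence from Lemma \ref{lem: a.e. conv eta}, bound $|G_k|$ uniformly so that the integrand is controlled by $C(|u_x^{\eps,\eta}| + |h_x^\eps|)$, and deduce equi-integrability from the $L^2$ bound of Lemma \ref{lem: cpt eta} in order to apply Vitali's theorem. Your treatment is in fact slightly more detailed than the paper's, in that you spell out why the $\eta$-dependence of $G_k$ itself (through $\ourphi$ and $f'_\eta$) does not obstruct the pointwise convergence of the integrand.
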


\begin{proof}
Let us choose the sequence from Lemma \ref{lem: a.e. conv eta}, for which we know that the integrand is convergent a.\,e. Then, it suffices to check that it is uniformly integrable and apply the Vitali convergence theorem. Bearing in mind the formula for $G_k(\cdot)$, the absolute value of the integrand can be estimated from above by $C |u_{x}^{\eps, \eta} - h_x^{\eps}|$ for some constant $C$. Thus, uniform integrability follows from boundedness of the sequence $\{u_x^{\eps, \eta}\}_\eta$ in $L^2$. 
\end{proof}

\begin{prop} \label{prop: inequality with eps}
	The following estimate holds
	\begin{equation*}
		f'(k) \int_I \zeta^2 |u_x^\eps|^2 \frac{(\varphi(u_x^\eps) - k)_+}{\varphi^2(u_x^\eps)} \dd \Lb^1 \leq f^\infty \int_I \zeta^2 |h_x^\eps| |u_x^\eps| \frac{(\varphi(u_x^\eps) - k)_+}{\varphi^2(u_x^\eps)} \dd \Lb^1 + \lambda \mathcal{R}(\eps, k),
	\end{equation*}
where $\mathcal{R}(\eps, k)$ was defined in Proposition \ref{prop-rhs-eps-eta}.
\end{prop}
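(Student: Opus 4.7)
The approach is to combine Propositions \ref{prop-lhs-eps-eta} and \ref{prop-rhs-eps-eta} applied to the identity \eqref{eq: after mult}, and then perform two elementary pointwise estimates on the resulting inequality. Evaluating \eqref{eq: after mult} along the subsequence $\eta_j$ from Proposition \ref{prop-lhs-eps-eta}, the left-hand side converges to
\begin{equation*}
\frac{1}{\lambda} \int_I \zeta^2 \frac{(\varphi(u_x^\eps) - k)_+}{\varphi^2(u_x^\eps)} f'(\varphi(u_x^\eps))\, (u_x^\eps - h_x^\eps) \cdot u_x^\eps \dd \Lb^1.
\end{equation*}
Since the right-hand side of \eqref{eq: after mult} equals the left-hand side for every $\eta$, the limit of the right-hand side along $\eta_j$ exists and coincides with the displayed quantity; on the other hand, as a~limit along a~subsequence, it is bounded above by $\mathcal{R}(\eps, k)$ as defined in \eqref{def_R}. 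Multiplying through by $\lambda$ yields
\begin{equation*}
\int_I \zeta^2 \frac{(\varphi(u_x^\eps) - k)_+}{\varphi^2(u_x^\eps)} f'(\varphi(u_x^\eps))\, (u_x^\eps - h_x^\eps) \cdot u_x^\eps \dd \Lb^1 \leq \lambda \mathcal{R}(\eps, k).
\end{equation*}

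Next I would split $(u_x^\eps - h_x^\eps) \cdot u_x^\eps = |u_x^\eps|^2 - h_x^\eps \cdot u_x^\eps$ and move the cross term to the right-hand side. On the remaining $|u_x^\eps|^2$ integrand, I would use that $(\varphi(u_x^\eps) - k)_+$ is supported on $\{\varphi(u_x^\eps) > k\}$ together with monotonicity of $f'$ (which follows from convexity of $f$) to replace $f'(\varphi(u_x^\eps))$ with the lower bound $f'(k)$. On the transferred term I would apply the Cauchy--Schwarz inequality $h_x^\eps \cdot u_x^\eps \leq |h_x^\eps|\,|u_x^\eps|$ together with the pointwise bound $f'(\varphi(u_x^\eps)) \leq f^\infty$, which holds by monotonicity of $f'$ and the identity $\lim_{t \to \infty} f'(t) = f^\infty$ recorded in Remark \ref{remark: f(t)/t}. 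Combining these estimates produces the advertised inequality.

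The argument is essentially algebraic once Propositions \ref{prop-lhs-eps-eta} and \ref{prop-rhs-eps-eta} are in hand; the only minor bookkeeping point is that the factor $(\varphi(u_x^\eps) - k)_+ / \varphi^2(u_x^\eps)$ is a~priori undefined where $u_x^\eps = 0$, but for $k > 0$ the numerator vanishes on a~neighbourhood of the origin, so the ratio extends by zero without ambiguity. The substantive technical work---extracting the subsequence along which $u_x^{\eps, \eta_j} \to u_x^\eps$ almost everywhere and controlling the off-diagonal terms in $\mathcal{R}(\eps, k)$---has already been performed in Lemma~\ref{lem: a.e. conv eta} and Propositions~\ref{prop-lhs-eps-eta},~\ref{prop-rhs-eps-eta}, so no significant obstacle remains at this step.
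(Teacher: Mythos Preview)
Your proof is correct and follows essentially the same approach as the paper: combine Propositions~\ref{prop-lhs-eps-eta} and~\ref{prop-rhs-eps-eta} via \eqref{eq: after mult} along the subsequence $\eta_j$, split $(u_x^\eps - h_x^\eps)\cdot u_x^\eps$, and apply the monotonicity bounds $f'(k)\leq f'(\varphi(u_x^\eps))\leq f^\infty$ on the support of $(\varphi(u_x^\eps)-k)_+$ together with $h_x^\eps\cdot u_x^\eps\leq |h_x^\eps|\,|u_x^\eps|$. Your write-up is in fact slightly more explicit than the paper's (you spell out the Cauchy--Schwarz step and the $0/0$ convention at the origin), but the argument is the same.
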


\begin{proof}
The proof boils down to observing what has already been proved about the Euler--Lagrange equation \eqref{Euler-Lagrange-eps-eta} of $\E_{\eps, \eta}$. Combining Propositions \ref{prop-rhs-eps-eta} and \ref{prop-lhs-eps-eta} gives
\begin{equation*}
	\int_I \zeta^2 \frac{(\varphi(u_x^\eps) - k)_+}{\varphi^2(u_x^\eps)} f'(\varphi(u_x^\eps)) |u_x^{\eps}|^2\dd \Lb^1 
	\leq
	 \int_I \zeta^2 \frac{(\varphi(u_x^\eps) - k)_+}{\varphi^2(u_x^\eps)} f'(\varphi(u_x^\eps)) h_x^{\eps} \cdot u_x^\eps \dd \Lb^1 + \mathcal{R}(\eps, k).
\end{equation*}
Then, on the l.\,h.\,s.\ we use the facts that $f'$ is non-decreasing and that the integration takes place on the set where $\varphi(u_x^\eps) > k$, whereas on the r.\,h.\,s.\ the fact that $f'(t) \leq f^\infty$ for all~$t$.
\end{proof}

\subsection*{Limit passage with $\eps$}

\begin{lemma} \label{lem: u eps a.e.}
There exist functions $\overline{u}_x^{\eps}, \overline{u}_x^{ac} \in C(\overline{I}, \overline{\R^n})$ s.\,t.\ $\overline{u}_x^{\eps} \to \overline{u}_x^{ac}$ in $C(\overline{I}, \overline{\R^n})$ and $\overline{u}_x^{\eps}(x_0) = u_x^{\eps}(x_0)$, $\overline{u}_x^{ac}(x_0) = u_x^{ac}(x_0)$ for $\Lb^1$-a.\,e.\ $x_0 \in \overline{I}$.
\end{lemma}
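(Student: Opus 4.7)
The plan is to exploit the differentiation operator $DF$ which, by Lemma \ref{lem: DF homeo}, extends to a homeomorphism $\overline{DF}\colon \overline{\R^n} \to \overline{DF}(\overline{\R^n})$, and to work with $DF(u_x^\eps)$ and $DF(u_x^{ac})$ rather than $u_x^\eps$ and $u_x^{ac}$ directly. The reason this is attractive is that by Lemma \ref{lem: vanish at endpoints} the latter pair lies in $W_0^{1,2}(I,\R^n) \hookrightarrow C(\overline{I},\R^n)$, with distributional derivatives $\lambda^{-1}(u^\eps - h^\eps)$ and $\lambda^{-1}(u - h)$ respectively. Since $u^\eps \to u$ strongly in $L^2(I,\R^n)$ by Lemma \ref{lem: gamma conv eps} and $h^\eps \to h$ strongly in $L^2$ by construction, these derivatives converge in $L^2$; combined with the vanishing boundary condition at the left endpoint $a$ of $I$, the identity
\[DF(u_x^\eps)(x_0) = \int_a^{x_0} (DF(u_x^\eps))_x \dd \Lb^1\]
and the Cauchy--Schwarz inequality produce uniform convergence $DF(u_x^\eps) \to DF(u_x^{ac})$ in $C(\overline{I},\R^n)$ as $\eps \to 0$.

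Next I would define the candidates by postcomposing with $\overline{DF}^{-1}$, setting
\[\overline{u}_x^\eps(x_0) := \overline{DF}^{-1}(DF(u_x^\eps)(x_0)), \qquad \overline{u}_x^{ac}(x_0) := \overline{DF}^{-1}(DF(u_x^{ac})(x_0)).\]
For this to be a well-defined map into $\overline{\R^n}$, the continuous representatives of $DF(u_x^\eps)$ and $DF(u_x^{ac})$ must take values in the image $\overline{DF}(\overline{\R^n})$. This image is compact (being the continuous image of the compact space $\overline{\R^n}$), hence closed in $\R^n$. At $\Lb^1$-a.e.\ $x_0$ the continuous representative coincides with the genuine composition $DF(u_x^\eps(x_0)) \in DF(\R^n) \subset \overline{DF}(\overline{\R^n})$, so continuity of the representative together with closedness of the image forces every pointwise value to lie in it as well. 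With the lifts well-defined, the containments $\overline{u}_x^\eps, \overline{u}_x^{ac} \in C(\overline{I}, \overline{\R^n})$ are immediate, and uniform convergence $\overline{u}_x^\eps \to \overline{u}_x^{ac}$ follows because $\overline{DF}^{-1}$ is continuous on a compact set and therefore uniformly continuous.

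The $\Lb^1$-a.e.\ identification with $u_x^\eps$ and $u_x^{ac}$ is automatic: at any point $x_0$ where the Sobolev representative $DF(u_x^\eps)(x_0)$ agrees with the pointwise composition $DF(u_x^\eps(x_0))$, the relation $\overline{DF}|_{\R^n} = DF$ together with the injectivity built into Lemma \ref{lem: DF homeo} yields $\overline{u}_x^\eps(x_0) = u_x^\eps(x_0)$, and analogously for $\overline{u}_x^{ac}$. The step I expect to warrant the most care is verifying that the continuous representatives never escape $\overline{DF}(\overline{\R^n})$ even at exceptional points; this is precisely where the spherical compactification does the work, since $|u_x^\eps|$ or $|u_x^{ac}|$ may well be unbounded at points of concentration of the measure $u_x$, but their images under $DF$ are confined to the compact target $\overline{DF}(\overline{\R^n})$ and therefore admit a continuous preimage in $\overline{\R^n}$.
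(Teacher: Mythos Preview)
Your proposal is correct and follows essentially the same route as the paper: both use Lemma~\ref{lem: vanish at endpoints} and the $L^2$ convergence $u^\eps - h^\eps \to u - h$ to obtain $DF(u_x^\eps) \to DF(u_x^{ac})$ in $C(\overline{I},\R^n)$, then define $\overline{u}_x^\eps$ and $\overline{u}_x^{ac}$ by postcomposing with $\overline{DF}^{-1}$ from Lemma~\ref{lem: DF homeo}. Your argument is in fact slightly more careful than the paper's on one point---you explicitly verify via compactness that the continuous representatives of $DF(u_x^\eps)$ and $DF(u_x^{ac})$ take values in $\overline{DF}(\overline{\R^n})$ even at exceptional points, a well-definedness issue the paper leaves implicit.
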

\begin{proof}
	Lemma \ref{lem: vanish at endpoints} implies (by reasoning similar to the beginning of the proof of Lemma \ref{lem: a.e. conv eta}) that $DF(u_x^\eps)$ converge to $DF(u_x^{ac})$ in $C(\overline{I}, \R^n)$ and therefore in $C(\overline{I}, \overline{\R^n})$. The map $\overline{DF}$ is a~homeomorphism onto its image, as established in Lemma \ref{lem: DF homeo}, which prompts the following definition:
\begin{equation*}
	\overline{u}_x^{\eps} := \overline{DF}^{-1} \circ DF(u_x^{\eps}) \text{ and } \overline{u}_x^{ac} := \overline{DF}^{-1} \circ DF (u_x^{ac}).
\end{equation*}
Requested properties of these functions now follow from continuity of $\overline{DF}^{-1}$ and the fact that $u_x^{\eps_i}$ and $u_x^{ac}$ are almost everywhere finite.
\end{proof}

\begin{proof}[Proof of Theorem \ref{thm:regular norm}]
Let us recall the inequality proved in Proposition \ref{prop: inequality with eps}
\begin{equation*}
		L_{\eps, k} :=  f'(k) \int_I \zeta^2 |u_x^\eps|^2 \frac{(\varphi(u_x^\eps) - k)_+}{\varphi^2(u_x^\eps)} \dd \Lb^1 \leq f^\infty \int_I \zeta^2 |h_x^\eps| |u_x^\eps| \frac{(\varphi(u_x^\eps) - k)_+}{\varphi^2(u_x^\eps)} \dd \Lb^1 + \lambda \mathcal{R}(\eps, k) := R_{\eps, k},
\end{equation*}
where $\zeta \in C_c^1(I)$ and $\lim_{k \to \infty} \limsup_{\eps \to 0} \mathcal{R}(\eps, k) = 0$ as stated in Proposition \ref{prop-rhs-eps-eta}. We will firstly pass with $\eps$ to zero. For $k >0$, we define a~continuous function $b_k\colon \overline{\R^n} \to \R$ by
\begin{equation*}
	b_k(p) = \begin{cases}
        0 & \text{ if } p = 0, \\
		\frac{|p| \left( \varphi(p) - k \right)_+}{\varphi^2(p)} & \text{ if } p \in \R^n \setminus \{0\},  \\
		\frac{1}{\varphi(\omega)} & \text{ if } p = \infty \omega \in \mathbb{S}^{n-1}_\infty.
	\end{cases}
\end{equation*}
Using Lemma \ref{lem: u eps a.e.}, we can rewrite
\begin{equation*}
	L_{\eps, k} = f'(k) \int_I \zeta^2 \left( b_k(\overline{u}_x^\eps) - b_k(\overline{u}_x^{ac}) \right) |u_x^{\eps}|\dd \Lb^1 + f'(k) \int_I \zeta^2 b_k(\overline{u}_x^{ac}) |u_x^{\eps}| \dd \Lb^1.
\end{equation*}
Again in view of Lemma \ref{lem: u eps a.e.}, we see that $b_k(\overline{u}_x^\eps)$ converges to $b_k(\overline{u}_x^{ac})$ in $C(\overline{I}, \R)$, which implies that the first term converges to zero (recall that $\{u_x^\eps\}_\eps$ is bounded in $L^1$). The second term is lower semicontinuous w.\,r.\,t.\ weak* convergence of $u_x^\eps$ to $u_x$ (see Corollary \ref{cor-gs-lsc-zeta}) and so
\begin{equation*}
	\lim_{\eps \to 0} L_{\eps, k} \geq f'(k) \int_I \zeta^2 b_k(\overline{u}_x^{ac}) \dd |u_x|.
\end{equation*}
As $h^\eps$ was chosen so that $|h_x^\eps| \weaklystar |h_x|$, one shows in a~similar manner that
\begin{equation*}
	\limsup_{\eps \to 0} R_{\eps, k} \leq f^\infty \int_I \zeta^2 b_k(\overline{u}_x^{ac}) \dd |h_x| + \limsup_{\eps \to 0} \mathcal{R}(\eps, k).
\end{equation*}
As a result,
\begin{align} \label{eq: before k limit}
	f'(k) \int_I \zeta^2 b_k(\overline{u}_x^{ac}) \dd |u_x^{s}| 		& \leq f^\infty \int_I \zeta^2 b_k(\overline{u}_x^{ac}) \dd |h_x| + \limsup_{\eps \to 0} \mathcal{R}(\eps, k) \leq \\
	& \leq f^\infty \int_I \zeta^2 b_k(\overline{u}_x^{ac})  |h_x^{ac}| \dd \Lb^1 + f^\infty \int_I \zeta^2 b_k(\overline{u}_x^{ac}) \dd |h_x^{s}| + \limsup_{\eps \to 0} \mathcal{R}(\eps, k). \nonumber
\end{align}

We will now pass with $k$ to infinity. At any point $x \in I$ for which $\overline{u}_x^{ac}(x) \in \mathbb{S}^{n-1}_\infty$, set $\omega(x) \in \mathbb{S}_{1}^{n-1}$ to be such that $\overline{u}_x^{ac}(x) = \infty \, \omega(x)$. Clearly, for any $x \in I$
\[
	b_k(\overline{u}_x^{ac}(x)) \xrightarrow{k \to \infty} b(x) := \frac{1}{\varphi(\omega(x))} \indi_{ \left\{x: \, \overline{u}_x^{ac}(x) \in \mathbb{S}_\infty^{n-1} \right\}}(x).
\]
Therefore, the fact that $|\overline{u}_x^{ac}|$ is a.\,e.\ finite together with dominated convergence theorem allow us to conclude that \eqref{eq: before k limit} after passing with $k$ to infinity and dividing by $f^\infty$ becomes
\begin{equation}\label{eq: after k limit}
	\int_I \zeta^2 b \dd |u_x^s| \leq \int_I \zeta^2 b \dd |h_x^s|.
\end{equation}
Denoting  $\dd \mu =  b \dd |u_x^s|$, $\dd \nu =  b \dd |h_x^s|$ we deduce $\mu \leq \nu$ as Borel measures by Lemma \ref{lem: mu leq nu}, and therefore also $\widetilde{\mu} \leq \widetilde{\nu}$, where 
\[\dd \widetilde{\mu} \!:= \varphi(\omega) \dd \mu =  \indi_{ \left \{x: \, \overline{u}_x^{ac}(x) \in \mathbb{S}_\infty^{n-1} \right\}} \dd |u_x^s|, \quad \dd \widetilde{\nu} \!:= \varphi(\omega) \dd \nu =  \indi_{ \left \{x: \, \overline{u}_x^{ac}(x) \in \mathbb{S}_\infty^{n-1} \right\}} \dd |h_x^s|\]
By Lemma \ref{lem: ac sphere} below, $\widetilde{\mu}$ actually coincides with $|u_x^s|$. On the other hand, clearly $\widetilde{\nu} \leq |h_x^s|$, which concludes the proof. 
\end{proof}

\begin{lemma} \label{lem: ac sphere}
	For $|u_x^s|$-a.\,e.\ $x$ it is true that $\overline{u}_x^{ac}(x) \in \mathbb{S}_\infty^{n-1}$.
\end{lemma}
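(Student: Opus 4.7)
The plan is to combine Proposition~\ref{thm:anzellotti} with the Euler--Lagrange identity of Lemma~\ref{lem: vanish at endpoints}(iii) to pin down the continuous field $z := DF(u_x^{ac})$ on the support of $|u_x^s|$. Concretely, I will show that $z(x) = f^\infty D\varphi(\nu(x))$ for $|u_x^s|$-a.e.\ $x$, where $\nu := u_x^s/|u_x^s|$. Once this is proved, $z(x)\in f^\infty D\varphi(\mathbb{S}^{n-1})=\overline{DF}(\mathbb{S}_\infty^{n-1})$ on the same set, and Lemma~\ref{lem: DF homeo} forces $\overline{u}_x^{ac}(x)=\overline{DF}^{-1}(z(x))\in \mathbb{S}_\infty^{n-1}$ there, which is exactly the claim.

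To extract the optimality condition, I will test $u$ against a family of purely singular $BV$-competitors. Given a bounded Borel $\sigma\colon I\to \R^n$, set $v(x):=\int_a^x \sigma\dd |u_x^s|$. Then $v\in L^\infty(I,\R^n)\cap BV(I,\R^n)\subset L^2(I,\R^n)\cap BV(I,\R^n)$ with $v_x^{ac}=0$ and $v_x^s=\sigma\dd |u_x^s|$; in particular $|v_x^s|\ll |u_x^s|$, so Proposition~\ref{thm:anzellotti} applies and yields
\[
\tfrac{\dd}{\dd t}\Big|_{t=0}\F(u+tv)=\int_I DF^\infty(\nu)\cdot \dd v_x^s = f^\infty\int_I D\varphi(\nu)\cdot \sigma\dd |u_x^s|,
\]
using $DF^\infty=f^\infty D\varphi$ and $v_x^{ac}=0$. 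Minimality of $u$ and two-sided differentiability along $v$ then give
\[
\lambda f^\infty\!\int_I D\varphi(\nu)\cdot \sigma\dd |u_x^s|+\int_I(u-h)\cdot v\dd \Lb^1=0.
\]
Substituting $u-h=\lambda z_x$ and integrating by parts (no boundary terms, since $z\in W^{1,2}_0$ is continuous and vanishes at $\partial I$), $\int_I(u-h)\cdot v\dd \Lb^1=-\lambda\int_I z\cdot \dd v_x=-\lambda\int_I z\cdot \sigma\dd |u_x^s|$.

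Combining the two identities, $\int_I \bigl(f^\infty D\varphi(\nu)-z\bigr)\cdot \sigma\dd |u_x^s|=0$ for every bounded Borel $\sigma$. Choosing $\sigma:=f^\infty D\varphi(\nu)-z$, which is Borel and bounded since $z$ is continuous on $\overline I$ and $D\varphi$ is bounded on $\mathbb{S}^{n-1}$, gives $\int_I|f^\infty D\varphi(\nu)-z|^2\dd |u_x^s|=0$, hence $z=f^\infty D\varphi(\nu)$ $|u_x^s|$-a.e., as desired. The main delicate point I expect is the vector-valued integration-by-parts identity $\int_I z_x\cdot v\dd \Lb^1=-\int_I z\cdot \dd v_x$ for $z\in W^{1,2}_0\cap C(\overline I,\R^n)$ and $v\in BV(I,\R^n)$ with a possibly singular $v_x$; it reduces to the scalar 1D statement applied componentwise, and the boundary contribution vanishes because $z|_{\partial I}=0$.
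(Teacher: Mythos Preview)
Your proof is correct, and it takes a genuinely different route from the paper's.

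The paper argues by contraposition via the approximation scheme: if $\overline{u}_x^{ac}(x_0)\in\R^n$, continuity of $\overline{u}_x^{ac}$ (Lemma~\ref{lem: u eps a.e.}) gives a neighbourhood $J$ on which $\overline{u}_x^{ac}$ stays finite; the uniform convergence $\overline{u}_x^\eps\to\overline{u}_x^{ac}$ in $C(\overline I,\overline{\R^n})$ then upgrades to Euclidean uniform convergence on $J$, forcing $u_x^\eps\weaklystar u_x^{ac}$ there. Comparing with $u_x^\eps\weaklystar u_x$ (Lemma~\ref{lem: gamma conv eps}) yields $u_x^s\mres J=0$, so $x_0\notin\supp|u_x^s|$. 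This uses the approximating minimizers $u^\eps$ in an essential way and shows the slightly stronger statement that $\overline{u}_x^{ac}\in\mathbb S_\infty^{n-1}$ on all of $\supp|u_x^s|$.

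Your argument bypasses the approximation entirely: testing the minimality of $u$ against the purely singular perturbations $v_x=\sigma\,|u_x^s|$ and invoking Proposition~\ref{thm:anzellotti} together with Lemma~\ref{lem: vanish at endpoints}(iii) pins down the \emph{continuous} representative $z=DF(u_x^{ac})$ as $f^\infty D\varphi(\nu)$ $|u_x^s|$-a.e., after which Lemma~\ref{lem: DF homeo} finishes. The integration by parts $\int_I z_x\cdot v\dd\Lb^1=-\int_I z\cdot\dd v_x$ is indeed valid for $z\in W^{1,2}_0(I,\R^n)\subset C(\overline I,\R^n)$ and $v\in BV(I,\R^n)$, with boundary terms $z(b)v(b^-)-z(a)v(a^+)$ vanishing since $z|_{\partial I}=0$; note that it is the continuous representative of $z$ that appears in $\int_I z\cdot\dd v_x$, which is precisely what you need. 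A small bonus of your approach is that it identifies the direction at infinity, $\overline{u}_x^{ac}(x)=\infty\cdot\nu(x)$ with $\nu=u_x^s/|u_x^s|$, for $|u_x^s|$-a.e.\ $x$. The paper's route, on the other hand, sits more naturally inside the approximation framework already in place and yields the conclusion pointwise on the support rather than almost everywhere.
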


\begin{proof}
Choose $x_0$ such that $|\overline{u}_x^{ac}(x_0)| < \infty$. Since $\overline{u}_x^{ac}$ is continuous, there exists an open interval $J \subset I$ such that for any $y \in J$ it is true that $\overline{u}_x^{ac}(y) \in \R^n$. Then for any $\xi \in C_c(J)$,
\begin{equation*}
	\left | \int_J u_x^{\eps} \xi \dd \Lb^1 - \int_J u_x^{ac} \xi \dd \Lb^1 \right | = \left | \int_J \overline{u}_x^{\eps} \xi \dd \Lb^1 - \int_J \overline{u}_x^{ac} \xi \dd \Lb^1 \right | \leq \sup_{J} |\overline{u}_x^\eps - \overline{u}_x^{ac} | \int_J \xi \dd \Lb^1 \xrightarrow{\eps \to 0} 0,
\end{equation*}
due to the facts that $\overline{u}_x^{\eps}$ coincides with $u_x^\eps$ $\Lb^1$-a.\,e., $\overline{u}_x^{ac}$ coincides with $u_x^{ac}$ $\Lb^1$-a.\,e.\ and that $\overline{u}_x^{\eps}$ converge in $C(\overline{I}, \overline{\R^n})$ to $\overline{u}_x^{ac}$. Thus we have shown that $u_x^\eps \weaklystar u_x^{ac}$ on $J$. On the other hand, Lemma \ref{lem: gamma conv eps} says that $u_x^\eps \weaklystar u_x$ on $J$, which implies that $u_x^s \mres J = 0$ and, consequently, that
\[
	|u_x^s| \mres J = 0,
\]
which in turn means that $x_0 \notin \mathrm{supp}\,|u_x^s|$ (the support of the measure $|u_x^s|$). We have therefore established that if $x_0 \in \mathrm{supp}\,|u_x^s|$, then $\overline{u}_x^{ac} \in \mathbb{S}_\infty^{n-1}$. Recalling that the set $\mathrm{supp}\,|u_x^s|$ is of full $|u_x^s|$ measure we conclude the proof. \\
\end{proof} 

\noindent 
{\bf \Large Acknowledgements} \\ 

The second author was partially supported by the grant no.\ 2020/36/C/ST1/00492 of the National Science Center, Poland. Part of this work was created during second author's JSPS Postdoctoral Fellowship at the University of Tokyo.  
\bibliographystyle{asdfgh-2}
\bibliography{bib.bib.bib}
\end{document}